\newtheorem{theorem}{Theorem}[section]
\newtheorem{lem}[theorem]{Lemma}
\newtheorem{cor}[theorem]{Corollary}
\theoremstyle{definition}
\newtheorem{defn}[theorem]{Definition}
\theoremstyle{remark}
\newtheorem{rem}[theorem]{Remark}
\newtheorem{theorem*}{Theorem}
\newtheorem{assumption*}[theorem*]{Assumption}
\renewcommand{\geq}{\geqslant}
\renewcommand{\ge}{\geqslant}
\renewcommand{\le}{\leqslant}
\numberwithin{equation}{section} 
\newcommand{\eps}{\varepsilon}
\newcommand{\dd}{\,\mathrm{d}}
\newcommand{\ti}{\tilde}
\newcommand{\td}{\tilde}
\newcommand{\vp}{\varphi}
\renewcommand{\div}{\mathrm{div}\,}     
\newcommand{\uf}{u}
\newcommand{\ubb}{u_{BB}}
\newcommand{\vf}{v}
\newcommand{\f}{f}
\newcommand{\g}{g}
\renewcommand{\t}{\theta}
\newcommand{\vt}{\vartheta}
\newcommand{\tauz}{z}
\newenvironment{pdeq}{ \left\{ \begin{aligned}}{\end{aligned}\right.}
\newcommand{\vpp}[1]{\ensuremath{\left(#1\right)}}
\newcommand{\cald}{{\mathcal D}}
\newcommand{\calf}{{\mathcal F}}
\newcommand{\call}{{\mathcal L}}
\newcommand{\mcL}{{\mathcal L}}
\newcommand{\calm}{{\mathcal M}}
\newcommand{\calo}{{\mathcal O}}
\newcommand{\cals}{{\mathcal S}}
\newcommand{\calz}{{\mathcal Z}}
\newcommand{\R}{\mathbb{R}}
\newcommand{\Z}{\mathbb{Z}}
\newcommand{\C}{\mathbb{C}}
\newcommand{\N}{\mathbb{N}}
\DeclareMathOperator{\id}{\textsf{id}}
\DeclareMathOperator{\supp}{supp}
\DeclareMathOperator{\sgn}{sgn}
\newcommand{\diffop}{\call}
\newcommand{\Fdiffop}{\diffop(i\tau,i\xi,\vf)}
\newcommand{\set}[1]{\ensuremath{\{#1\}}}
\newcommand{\setc}[2]{\ensuremath{\{#1\ \lvert\ #2\}}}
\newcommand{\rhon}{\beta}
\newcommand{\rhoe}{\rho}
\renewcommand{\d}{\delta}
\renewcommand{\b}{\beta}
\newcommand{\s}{\sigma}
\newcommand{\mcF}{\calf}
\newcommand{\norm}[1]{\lVert#1\rVert}
\newcommand{\opnorm}[1]{{\lvert\kern-0.25ex\lvert\kern-0.25ex\lvert #1 \rvert\kern-0.25ex\rvert\kern-0.25ex\rvert}}
\newcommand{\WSR}[2]{W^{#1,#2}}
\newcommand{\DSR}[2]{\dot{W}^{#1,#2}}
\newcommand{\CR}[1]{C^{#1}}  
\newcommand{\LR}[1]{L^{#1}}
\newcommand{\CRi}{\CR \infty}
\newcommand{\CRci}{\CR \infty_c}
\newcommand{\tin}{\text{in }}
\newcommand{\tif}{\text{if }}
\newcommand{\ton}{\text{on }}
\newcommand{\tand}{\text{and }}
\newcommand{\newCCtr}[2][d]{
\newcounter{#2}\setcounter{#2}{0}
\expandafter\xdef\csname kyedtheconst#2\endcsname{#1}
}
\newcommand{\Cc}[2][nolabel]{
\stepcounter{#2}
\expandafter\ensuremath{\csname kyedtheconst#2\endcsname_{\arabic{#2}}}
\ifthenelse{\equal{#1}{nolabel}}
{}
{\expandafter\xdef\csname kyedconst#1\endcsname
{\expandafter\ensuremath{\csname kyedtheconst#2\endcsname_{\arabic{#2}}}}}
}
\newcommand{\Ccn}[2][nolabel]{
\expandafter\ensuremath{\csname kyedtheconst#2\endcsname}
\ifthenelse{\equal{#1}{nolabel}}
{}
{\expandafter\xdef\csname kyedconst#1\endcsname
{\expandafter\ensuremath{\csname kyedtheconst#2\endcsname}}}
}
\newcommand{\CcSetCtr}[2]{
\setcounter{#1}{#2}
}
\newcommand{\Cclast}[1]{
\expandafter\ensuremath{\csname kyedtheconst#1\endcsname_{\arabic{#1}}}
}
\newcommand{\Ccllast}[1]{
\addtocounter{#1}{-1}
\expandafter\ensuremath{\csname kyedtheconst#1\endcsname_{\arabic{#1}}}
\addtocounter{#1}{1}
}
\newcommand{\const}[1]{
\expandafter{\ifcsname kyedconst#1\endcsname
  \csname kyedconst#1\endcsname
\else
  \errmessage{Undefined Kyedconstant #1.}%
\fi}
}
\def\blfootnote{\xdef\@thefnmark{}\@footnotetext}
\theoremstyle{plain}
\newtheorem{thm}{Theorem}[section]
\newtheorem*{thm*}{Theorem}
\theoremstyle{remark}
\begin{document}                        


\title[Optimal regularity for the porous medium equation]{Optimal regularity in time and space\\for the porous medium equation}


\author{Benjamin~Gess}
\address{Benjamin~Gess\newline
Max Planck Institute for Mathematics in the Sciences\newline Inselstr.~22, 04103 Leipzig, Germany\newline
and Fakult\"at f\"ur Mathematik, Universit\"at Bielefeld\newline Universit\"atstr. 25, 33615 Bielefeld, Germany }
\email{bgess@mis.mpg.de}
\author{Jonas~Sauer}
\address{Jonas~Sauer\newline
Max Planck Institute for Mathematics in the Sciences\newline Inselstr.~22, 04103 Leipzig, Germany}
\email{Jonas.Sauer@mis.mpg.de}
\author{Eitan Tadmor}
\address{Eitan Tadmor\newline
Department of Mathematics, Institute for Physical Sci. \& Technology\newline
and Center for Scientific Computation and Mathematical Modeling\newline
University of Maryland, College Park}
\email{tadmor@cscamm.umd.edu}


\begin{abstract}
Regularity estimates in time and space for solutions to the porous medium equation are shown in the scale of Sobolev spaces. In addition, higher spatial regularity for powers of the solutions is obtained. Scaling arguments indicate that these estimates are optimal. In the linear limit, the proven regularity estimates are consistent with the optimal regularity of the linear case.
\end{abstract}

\subjclass{35K59, 35B65, 35D30, 76SXX.}

\date{\today}
\keywords{Porous medium equation, entropy solutions, kinetic formulation, velocity averaging, regularity results.}

\maketitle   


\setcounter{tocdepth}{1}
\tableofcontents



\section{Introduction}\label{Int}

We prove estimates on the time and space regularity of solutions to porous medium equations
 \begin{align}\label{pme_sys}
 \begin{pdeq}
  \partial_t\uf-\Delta\uf^{[m]} &= S && \tin (0,T)\times \R^d,\\
  \uf(0)&=\uf_0 && \tin \R^d
 \end{pdeq}
 \end{align} 
 where  $u^{[m]}:=|u|^{m-1} u$ with $m>1$, $u_0 \in L^1(\R^d)$ and $S\in L^1((0,T)\times \R^d)$.  
  Solutions to porous medium equations are known to exhibit nonlinear phenomena like slow diffusion or filling up of holes at finite rate: if the initial data is compactly supported, then the support of the solution evolves with a free boundary that has finite speed of propagation. The solution close to the boundary is not smooth even for smooth initial data and zero forcing. 
 
 Despite many works on the problem of regularity of solutions to porous medium equations, until recently established regularity results in the literature in terms of Hölder or Sobolev spaces were restricted to spatial differentiability of order less than one (cf.\@ Ebmeyer \cite{Ebm05}, Tadmor and Tao \cite{TaT07}). For $m\searrow1$ this is in stark contrast to the limiting case $m=1$, where $u$ is up to twice weakly differentiable in space. Very recently, the first author has proven optimal spatial regularity for \eqref{pme_sys} in \cite{Ges17} for initial data $u_{0}\in(L^{1}\cap L^{1+\varepsilon})(\R^{d})$ for some $\varepsilon>0$. This leaves open three main aspects addressed in the present work: First, the derivation of optimal\footnote{Optimality is indicated by scaling arguments in Section \ref{Scale} below, and the derived estimates are consistent with the optimal space-time regularity in the linear case $m=1$.} space-time regularity. Second, the limit case $u_{0}\in L^{1}(\R^{d})$, which is of particular importance since it covers the case of the Barenblatt solution for which the estimates are shown to be optimal, cf.\@ Section~\ref{Scale} below. Third, higher order integrability. The solution of these three open problems is the purpose of the present paper.

 The first main result provides optimal space-time regularity for $L^1$ data.
 \begin{thm}\label{cor:pme_l1}
  Let $u_0\in\LR{1}(\R^d)$, $S\in L^1((0,T)\times \R^d)$ and $m\in(1,\infty)$. Let $u$ be the unique entropy solution to \eqref{pme_sys} on $[0,T]\times \R^d$.
 \begin{enumerate}
  \item Let $p\in (1,m]$ and define
 \begin{align*}
  \kappa_t:= \frac{m-p}{p}\frac{1}{m-1}, \quad
  \kappa_x:= \frac{p-1}{p}\frac{2}{m-1}.
 \end{align*}
  Then, for all $\sigma_t\in [0,\kappa_t) \cup \set{0}$ and $\sigma_x\in [0,\kappa_x)$ we have
 \begin{align*}
  u\in \WSR{\sigma_t}{p}(0,T;\WSR{\sigma_x}{p}(\R^d)).
 \end{align*}
  Moreover, we have the estimate
 \begin{align}\label{cor:pme_est1_l1}
  \|u\|_{\WSR{\sigma_t}{p}(0,T;\WSR{\sigma_x}{p}(\R^d))}\lesssim \|u_0\|_{L^1_{x}}^{m} +\|S\|_{L^1_{t,x}}^m +1.
 \end{align}
  \item Suppose $\calo\subset\subset \R^d$. Let $s\in [0,1]$ and define
 \begin{align*}
  p&:=s(m-1)+1, \quad
  \kappa_t:= \frac{1-s}{s(m-1)+1}, \quad
  \kappa_x:= \frac{2s}{s(m-1)+1}.
 \end{align*}
  Then for all $\sigma_t\in [0,\kappa_t)\cup \set{0}$, $\sigma_x\in [0,\kappa_x)\cup\set{0}$ and $q\in [1,p]$ we have
 \begin{align*}
  u\in \WSR{\sigma_t}{q}(0,T;\WSR{\sigma_x}{q}(\calo)).
 \end{align*}
  Moreover, we have the estimate
 \begin{align}\label{cor:pme_est2_l1}
  \|u\|_{\WSR{\sigma_t}{q}(0,T;\WSR{\sigma_x}{q}(\calo))}\lesssim \|u_0\|_{L^1_x}^{m} +\|S\|_{L^1_{t,x}}^m +1.
 \end{align}
 \end{enumerate}
 \end{thm}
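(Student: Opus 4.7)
The plan is to deduce Theorem \ref{cor:pme_l1} from the analogous optimal regularity result for more integrable initial data, which is the principal technical theorem of the paper (stated and proved in the subsequent sections for $u_0 \in L^1 \cap L^{1+\varepsilon}$). The point of Theorem \ref{cor:pme_l1} is that the quantitative estimate on the right-hand side of \eqref{cor:pme_est1_l1} involves only $L^1$-norms (raised to the power $m$), so the bound survives an approximation procedure from $L^{1+\varepsilon}$ to $L^1$ data.

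\textbf{Step 1 (approximation).} Choose $u_0^{(n)} \in C_c^\infty(\R^d)$ and $S^{(n)} \in C_c^\infty((0,T)\times\R^d)$ with $u_0^{(n)} \to u_0$ in $L^1(\R^d)$ and $S^{(n)} \to S$ in $L^1((0,T)\times\R^d)$. Let $u^{(n)}$ be the corresponding entropy solutions. By the well-known $L^1$-contraction property of the porous medium semigroup, $u^{(n)} \to u$ in $C([0,T];L^1(\R^d))$ and, up to a subsequence, pointwise almost everywhere.

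\textbf{Step 2 (uniform estimate).} Apply the already-established Sobolev regularity theorem to the smooth approximations $u^{(n)}$, obtaining uniform bounds of the form $\|u^{(n)}\|_{\WSR{\sigma_t}{p}(0,T;\WSR{\sigma_x}{p}(\R^d))} \lesssim \|u_0^{(n)}\|_{L^1_x}^m + \|S^{(n)}\|_{L^1_{t,x}}^m + 1$. Since the $L^1$ norms of the approximants converge, the right-hand side is uniformly bounded in $n$, so $\{u^{(n)}\}$ is bounded in the reflexive space $\WSR{\sigma_t}{p}(0,T;\WSR{\sigma_x}{p}(\R^d))$.

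\textbf{Step 3 (passage to the limit).} Extract a weakly convergent subsequence in $\WSR{\sigma_t}{p}(0,T;\WSR{\sigma_x}{p}(\R^d))$; by uniqueness of the $L^1$-limit, the weak limit must coincide with $u$. The estimate \eqref{cor:pme_est1_l1} then follows from weak lower semicontinuity of the Sobolev--Slobodeckij norm together with the convergence of the right-hand side. For the borderline value $\sigma_t=0$ at $p=m$ (where $\kappa_t=0$), the bound reduces to spatial regularity in $\LR{m}(0,T;\WSR{\sigma_x}{m}(\R^d))$, again stable under the same approximation.

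\textbf{Step 4 (the local statement).} For part (ii), setting $p := s(m-1)+1 \in (1,m]$ directly recovers the exponents $\kappa_t = (1-s)/p$ and $\kappa_x = 2s/p$ from part (i); thus part (i) yields the desired regularity with integrability exponent $p$ on the full space $\R^d$. Since $\calo \subset\subset \R^d$ is bounded, Hölder's inequality on $\calo$ provides the embedding $\WSR{\sigma_t}{p}(\calo) \hookrightarrow \WSR{\sigma_t}{q}(\calo)$ for any $q \in [1,p]$, which extends the estimate to the $L^q$-scale and allows the trivial choices $\sigma_x=0$ or $\sigma_t=0$ via the $L^q$-bound on $u$ alone.

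The main obstacle is not in this deduction itself but in the principal regularity theorem proved upstream, where the kinetic formulation and a velocity-averaging argument must be carried out in a manner which produces an estimate homogeneous in the $L^1$-norms of the data. Once such an $L^1$-estimate is in hand, the extension by density above is essentially soft. A secondary technical point is to verify that the entropy solution of \eqref{pme_sys} is indeed well-defined and unique under mere $L^1$ data in the sense used here, so that the limit object in Step~3 may unambiguously be identified with $u$; this is standard in the Carrillo-type entropy theory and will be invoked without further elaboration.
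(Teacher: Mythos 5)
There is a genuine gap, and it sits exactly at Step 2. You propose to apply ``the already-established Sobolev regularity theorem'' to smooth approximants $u_0^{(n)}, S^{(n)}$ and to read off a uniform bound $\|u^{(n)}\|_{\WSR{\sigma_t}{p}(\WSR{\sigma_x}{p})}\lesssim \|u_0^{(n)}\|_{L^1}^m+\|S^{(n)}\|_{L^1}^m+1$. But no such estimate is available upstream: the result for more integrable data (Theorem \ref{lem:pme}) controls the Sobolev norm by $\|u_0\|_{L^1_x\cap L^{\rhoe}_x}^{\rhoe}+\|S\|_{L^1_{t,x}\cap L^{\rhoe}_{t,x}}^{\rhoe}$ with $\rhoe>1$, i.e.\ the right-hand side genuinely involves an $L^{\rhoe}$-norm of the data, which is not controlled by the $L^1$-norm of the approximants. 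Producing an estimate whose right-hand side involves only $L^1$-norms is the entire content of Theorem \ref{cor:pme_l1}; assuming such an estimate for nice data and then passing to the limit begs the question. (There are further mismatches even at the formal level: the exponents of Theorem \ref{lem:pme} only reproduce $\kappa_t=\frac{m-p}{p}\frac{1}{m-1}$, $\kappa_x=\frac{p-1}{p}\frac{2}{m-1}$ in the excluded limit $\rhoe\to1$, and the admissible range $p\in(\rhoe,m-1+\rhoe)$ never reaches the endpoint $p=m$ claimed in part (i).)

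The reason a new argument is needed — and the route the paper actually takes — is that for $L^1$ data the singular moments $\int|v|^{-\gamma}q$ with $\gamma\in(0,1)$, which drive the proof for $L^1\cap L^{\rhoe}$ data via Lemma \ref{lem:ph-est}, need not be finite. The paper instead splits the kinetic function and the kinetic measure into small- and large-velocity parts, $f=f^<+f^>$, $q=q^<+q^>$, so that the degeneracy at $v=0$ and the growth at $|v|=\infty$ can be treated with different choices of $\gamma$ (using the pointwise-in-$v$ bound of Lemma \ref{lem:ph-est-1}), bootstraps an $L^s_{t,x}$-bound for $s<m+\frac2d$ via Corollary \ref{cor:av0}, Sobolev embedding and interpolation, and then applies the refined averaging result Corollary \ref{cor:av3} (which exploits the equation itself for small spatial frequencies) to each piece; the endpoint $p=m$ is recovered by a separate Sobolev-embedding step. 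Your Step 4 (deducing the localized statement (ii) from (i) by H\"older on $\calo$) is fine and matches the paper, but it rests on the unproved part (i).
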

 
 In the previous works by Tadmor and Tao \cite{TaT07} and Ebmeyer \cite{Ebm05}, initial data in $L^1 \cap L^\infty$ has been considered. However, the methods employed in these works did not allow a systematic analysis of the order of integrability of the solutions. For example, the results of \cite{Ebm05} are restricted to the particular order of integrability $p=\frac{2}{m+1}$, while \cite{TaT07} is restricted to $p=1$. In the second main result we provide a systematic treatment of higher order integrability. In particular, this includes and generalizes the corresponding results of \cite{Ebm05} in terms of regularity in Sobolev spaces. 
 
 Noting that the regularity of $\uf^{[m]}$ contains information on the time regularity of $\uf$ in virtue of the equation \eqref{pme_sys}, in addition, we analyze the spatial regularity of powers of the solution $u^{\mu}$ for $\mu\in[1,m]$.
  \begin{thm}\label{lem:pme}
 Let $u_0\in L^1(\R^d)\cap L^{\rhoe}(\R^d)$, $S\in L^1([0,T]\times\R^d)\cap L^{\rhoe}([0,T]\times\R^d)$ for some $\rhoe\in(1,\infty)$ and assume $m\in\vpp{1,\infty}$. Let $u$ be the unique entropy solution to \eqref{pme_sys} on $[0,T]\times\R^d$.
 \begin{enumerate}
  \item Let $\mu\in[1,m]$. Then, for all $p\in (1,\frac{m-1+\rhoe}{\mu})$ and $\sigma_x\in[0,\frac{\mu p -1}{p}\frac{2}{m-2+\rhoe})$ we have
 \begin{align*}
  u^{[\mu]}\in \LR{p}(0,T;\WSR{\sigma_x}{p}(\R^d)),
 \end{align*}
 and we have the estimate
 \begin{align}\label{lem:pme_est2}
  \|u^{[\mu]}\|_{\LR{p}(0,T;\WSR{\sigma_x}{p}(\R^d))}\lesssim \|u_0\|_{L^1_x\cap L^{\rhoe}_{x}}^{\mu\rhoe} + \|S\|_{L^1_{t,x}\cap L^{\rhoe}_{t,x}}^{\mu\rhoe}+1.
 \end{align}
  \item Let $p\in (\rhoe,m-1+\rhoe)$ and define
 \begin{align*}
  \kappa_t:= \frac{m-1+\rhoe-p}{p}\frac{1}{m-1}, \quad
  \kappa_x:= \frac{p-\rhoe}{p}\frac{2}{m-1}.
 \end{align*}
  Then, for all $\sigma_t\in [0,\kappa_t)$ and $\sigma_x\in [0,\kappa_x)$ we have
 \begin{align*}
  u\in \WSR{\sigma_t}{p}(0,T;\WSR{\sigma_x}{p}(\R^d)).
 \end{align*}
  Moreover, we have the estimate
 \begin{align}\label{lem:pme_est1}
  \|u\|_{\WSR{\sigma_t}{p}(0,T;\WSR{\sigma_x}{p}(\R^d))}\lesssim \|u_0\|_{L^1_x\cap L^{\rhoe}_{x}}^{\rhoe} + \|S\|_{L^1_{t,x}\cap L^{\rhoe}_{t,x}}^{\rhoe}+1.
 \end{align}
 \end{enumerate}
 \end{thm}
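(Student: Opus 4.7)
The plan is to adapt the kinetic approach used in \cite{Ges17} and in the proof of Theorem~\ref{cor:pme_l1}, exploiting the additional $L^{\rhoe}$ integrability to produce a scale of weighted a priori bounds on the parabolic defect measure, and then to run velocity averaging in this weighted setting. Throughout, $\chi(u,\xi) := \mathbf{1}_{0<\xi<u}-\mathbf{1}_{u<\xi<0}$ denotes the kinetic indicator, which satisfies
\begin{equation*}
\partial_t\chi \,-\, m|\xi|^{m-1}\Delta_x\chi \;=\; \partial_\xi q \,-\, \delta_{\xi=u}\, S
\end{equation*}
in the sense of distributions, with a nonnegative kinetic defect measure $q$ concentrated on $\{\xi=u\}$. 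The idea is that Theorem~\ref{cor:pme_l1} corresponds to testing this equation against the $L^1$/$L^2$ pair of weights $1$ and $|\xi|^0$, while the present statement requires the whole scale interpolating down to $|\xi|^{\rhoe-2}$.

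The first step is to derive the weighted dissipation identity by multiplying \eqref{pme_sys} by $|u|^{\rhoe-1}\sgn(u)$ (equivalently, by integrating the kinetic equation against $|\xi|^{\rhoe-2}$). This produces
\begin{equation*}
\|u\|_{L^\infty_t L^{\rhoe}_x}^{\rhoe} \,+\, \int_0^T\!\!\int_{\R^d}\!\!\int_\R |\xi|^{\rhoe-2}\,q\,d\xi\,dx\,dt \;\lesssim\; \|u_0\|_{L^{\rhoe}_x}^{\rhoe} + \|S\|_{L^{\rhoe}_{t,x}}^{\rhoe},
\end{equation*}
and the $\xi$-integral on the left is, up to a multiplicative constant depending on $m$ and $\rhoe$, equal to the Dirichlet integral $\int|\nabla u^{[(m-1+\rhoe)/2]}|^2\,dx\,dt$. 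Together with the $L^1$ bound inherited from mass conservation, this is the $L^{\rhoe}$-analogue of the energy estimate that drives the proof of Theorem~\ref{cor:pme_l1}, and explains the appearance of the threshold $m-1+\rhoe$ in both parts of the statement.

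For part~(i), one writes $u^{[\mu]} = \mu\int_\R \chi(u,\xi)\,|\xi|^{\mu-1}\sgn(\xi)\,d\xi$ and applies the truncated velocity averaging lemma of \cite{Ges17} to the kinetic equation, with the weighted dissipation above playing the role of the $L^2$ right-hand side and with cut-offs in $\xi$ tuned to the weight $|\xi|^{\rhoe-2}$. Optimizing the truncation scale yields $u^{[\mu]}\in L^p(0,T;\WSR{\sigma_x}{p}(\R^d))$ for the stated range $p<\frac{m-1+\rhoe}{\mu}$ and $\sigma_x<\frac{\mu p-1}{p}\frac{2}{m-2+\rhoe}$. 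Part~(ii) is proven by running the same averaging argument with $\mu=1$ simultaneously in the space and time variables (exploiting the parabolic anisotropy $(\tau,\xi)\sim(\lambda^2,\lambda)$ of the symbol $i\tau+m|\xi|^{m-1}|\eta|^2$), and by using the equation $\partial_t u=\Delta u^{[m]}+S$ together with the already established spatial regularity of $u^{[m]}$ to transfer spatial smoothness into fractional time smoothness. Interpolation with the $L^\infty_t L^{\rhoe}_x$ bound then produces the mixed regularity $u\in\WSR{\sigma_t}{p}(0,T;\WSR{\sigma_x}{p}(\R^d))$ for $\sigma_t<\kappa_t$ and $\sigma_x<\kappa_x$.

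The main obstacle is the sharp interpolation needed to reach the full range of exponents: one must balance the $\xi$-weight $|\xi|^{\rhoe-2}$ coming from the a priori bound against the elliptic multiplier $|\xi|^{m-1}$ in the principal symbol, while simultaneously optimizing the Littlewood--Paley cut-off scales in both $\xi$ and $(\tau,\eta)$. In part~(ii) this optimization must additionally respect the parabolic scaling so as to yield the boundary relation $\kappa_t+\kappa_x/2=1/p$. The endpoint restrictions $p<\frac{m-1+\rhoe}{\mu}$ in part~(i) and $p<m-1+\rhoe$ in part~(ii) arise precisely at the scale where this optimization ceases to close, in agreement with the scaling heuristic of Section~\ref{Scale}; checking that these thresholds are indeed attainable in the averaging lemma is the core technical difficulty.
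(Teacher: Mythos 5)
Your overall strategy is the paper's: the weighted moment bound $\int_0^T\!\!\int\!\!\int |v|^{\rhoe-2}q\,\dd v\dd x\dd t\lesssim \|u_0\|_{L^{\rhoe}}^{\rhoe}+\|S\|_{L^{\rhoe}}^{\rhoe}$ is exactly Lemma~\ref{lem:ph-est} with $\gamma=2-\rhoe$, and the paper likewise feeds it into a velocity-averaging lemma with a micro-local truncation in $v$ and real interpolation. The relation $\kappa_t+\kappa_x/2=1/p$ you identify is also correct. However, as a proof the proposal has a genuine gap: the entire quantitative core is asserted rather than carried out, and you concede as much when you write that checking attainability of the thresholds ``is the core technical difficulty.'' Concretely, what is missing is the mechanism of Lemma~\ref{lem:av}: the splitting $f=f^0+f^1$ at the degeneracy scale $|v||\xi|^2\lessgtr\delta$ on each Littlewood--Paley block, the $L^{\rhon}$ bound $\lesssim(\delta 2^{-2j})^{\mu-1+\rho}\|f\|_{L^{\rhon}}$ for the degenerate part via the isotropic truncation property, the $L^1$ bound for the non-degenerate part obtained by inverting the symbol against the weighted measures $|v|^{1-\gamma}g_0$ and $|v|^{-\gamma}g_1$ (which requires handling the $\partial_v$ falling on the cut-off and on $|v|^{\mu-1}$), and the $K$-functional equilibration of $\delta$. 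Without this, the specific exponents $p<\frac{m-1+\rhoe}{\mu}$ and $\sigma_x<\frac{\mu p-1}{p}\frac{2}{m-2+\rhoe}$ are not derived.

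Two further steps that your plan skips but that the paper needs are: (a) the averaging output is only a \emph{homogeneous} dominating-mixed Besov (or Chemin--Lerner) seminorm with a weak-$L^p$ integrability index; to reach the inhomogeneous $\WSR{\sigma_t}{p}(\WSR{\sigma_x}{p})$ norm one must separately control low frequencies in $t$ and in $x$ and combine four pieces of information (Lemmas~\ref{lem:emb_0}--\ref{lem:emb_2} and Steps 1--3 of Corollary~\ref{cor:av}), and one must upgrade weak-$L^p$ to $L^p$ by interpolating against an a priori $\LR{r}_{t,x}$ bound on $u$ for $r$ up to $m-1+\rhoe$ --- a bound which is itself not free but is bootstrapped from the $\sigma_x=0$ case of the same averaging lemma; your ``interpolation with the $L^\infty_tL^{\rhoe}_x$ bound'' does not supply it. (b) For the time regularity in part~(ii), the paper does not use the equation $\partial_tu=\Delta u^{[m]}+S$ to transfer spatial into temporal smoothness (that device appears only in Corollary~\ref{cor:av3}, for the $L^1$ theorem, to handle small spatial modes); instead it runs a second averaging lemma (Lemma~\ref{lem:av2}) localized only in $\tau$ and $v$ to control the Chemin--Lerner norm $\td L^{p}_x\dot B^{\sigma_t}_{p,\infty}$. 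Your equation-transfer route would additionally require spatial regularity of $u^{[m]}$ of order strictly greater than $2\sigma_t\cdot(\text{something})$, which part~(i) does not obviously provide in the full claimed range, so this substitution would need its own justification.
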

 
  Similarly as in Theorem \ref{cor:pme_l1}, if one restricts to estimates that are localized in space, the rigid interdependency of the coefficients in Theorem \ref{lem:pme} can be relaxed.
  
  \begin{cor}\label{cor:pme}
 Under the assumptions of Theorem \ref{lem:pme}, suppose $\calo\subset\subset \R^d$. 
 \begin{enumerate}
  \item Let $\mu\in[1,m]$. Then, for all $\sigma_x\in[0,\frac{2\mu}{m})$ and $q\in[1,\frac{m}{\mu}]$ we have
 \begin{align*}
  u^{[\mu]}\in \LR{q}(0,T;\WSR{\sigma_x}{q}(\calo)),
 \end{align*}
 and we have the estimate
 \begin{align}\label{cor:pme_est2}
  \|u^{[\mu]}\|_{\LR{q}(0,T;\WSR{\sigma_x}{q}(\calo))}\lesssim \|u_0\|_{L^1_x\cap L^{\rhoe}_{x}}^{\mu\rhoe} + \|S\|_{L^1_{t,x}\cap L^{\rhoe}_{t,x}}^{\mu\rhoe}+1.
 \end{align}
  \item Let $s\in [0,1]$ and define
 \begin{align*}
  p&:=s(m-1)+1, \quad
  \kappa_t:= \frac{1-s}{s(m-1)+1}, \quad
  \kappa_x:= \frac{2s}{s(m-1)+1}.
 \end{align*}
  Then for all $\sigma_t\in [0,\kappa_t)\cup \set{0}$, $\sigma_x\in [0,\kappa_x)\cup \set{0}$ and $q\in [1,p]$ we have
 \begin{align*}
  u\in \WSR{\sigma_t}{q}(0,T;\WSR{\sigma_x}{q}(\calo)).
 \end{align*}
  Moreover, we have the estimate
 \begin{align}\label{cor:pme_est1}
  \|u\|_{\WSR{\sigma_t}{q}(0,T;\WSR{\sigma_x}{q}(\calo))}\lesssim \|u_0\|_{L^1_x\cap L^{\rhoe}_{x}}^{\rhoe} + \|S\|_{L^1_{t,x}\cap L^{\rhoe}_{t,x}}^{\rhoe}+1.
 \end{align}
 \end{enumerate}
 \end{cor}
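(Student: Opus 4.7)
The plan is to deduce Corollary \ref{cor:pme} from Theorem \ref{lem:pme} by combining Hölder-type localization on $\calo$ with a limiting argument in the auxiliary integrability parameter $\tilde\rho \to 1^+$.

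For part (i), fix $\sigma_x \in [0, 2\mu/m)$ and $q \in [1, m/\mu]$. A direct computation shows that the upper bound $\frac{\mu \tilde p - 1}{\tilde p}\frac{2}{m-2+\tilde\rho}$ of Theorem~\ref{lem:pme}(i) converges to $\frac{2\mu}{m}$ as $(\tilde p, \tilde\rho) \to (m/\mu, 1)$ along the admissibility window $\tilde p < (m-1+\tilde\rho)/\mu$. Since $u_0, S \in L^1 \cap L^\rho$ implies $u_0, S \in L^1 \cap L^{\tilde\rho}$ for every $\tilde\rho \in [1, \rho]$ by the log-convexity of $L^r$-norms, we select $\tilde\rho \in (1, \rho]$ close enough to $1$ and $\tilde p \in (\max\{q, 1\}, (m-1+\tilde\rho)/\mu)$ so that $\sigma_x$ lies below the displayed bound. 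Theorem~\ref{lem:pme}(i) then yields $u^{[\mu]} \in L^{\tilde p}(0, T; W^{\sigma_x, \tilde p}(\R^d))$; since $\tilde p \geq q$ and $|\calo|<\infty$, Hölder's inequality produces the embedding $L^{\tilde p}(\calo)\hookrightarrow L^q(\calo)$ and the claimed regularity. The constant $\|u_0\|_{L^1\cap L^{\tilde\rho}}^{\mu\tilde\rho}$ furnished by the theorem is controlled by $\|u_0\|_{L^1\cap L^\rho}^{\mu\rho}+1$ via interpolation of $L^r$-norms and the elementary inequality $x^a \leq x^b+1$ for $0\leq a\leq b$, and analogously for $S$.

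Part (ii) proceeds along the same lines with Theorem~\ref{lem:pme}(ii). Given $\sigma_t \in [0, (1-s)/p)$ and $\sigma_x \in [0, 2s/p)$ with $p = s(m-1)+1$, a direct check gives $\sigma_t + m\sigma_x/2 < 1$; hence for $\tilde\rho \in (1, \rho]$ close enough to $1$ the perturbed inequality $\tilde\rho\sigma_t + (m-1+\tilde\rho)\sigma_x/2 < 1$ still holds. A brief algebraic manipulation of the theorem's bounds $\sigma_t < (m-1+\tilde\rho-\tilde p)/(\tilde p(m-1))$ and $\sigma_x < (\tilde p-\tilde\rho)/\tilde p \cdot 2/(m-1)$ shows that this is precisely the compatibility condition guaranteeing the existence of some $\tilde p$ in the admissibility window $(\tilde\rho, m-1+\tilde\rho)$ simultaneously verifying both. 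Since this $\tilde p$-interval degenerates onto $\{p\}$ as $\tilde\rho \to 1^+$ and $(\sigma_t,\sigma_x)$ approach the corollary boundary, one can arrange $\tilde p \geq q$ for any $q \in [1, p]$ and then conclude by Hölder localization to $\calo$ and the same transfer of constants as in part (i).

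The main technical point throughout is the quantitative continuity argument at $\tilde\rho = 1$: the admissible region of Theorem~\ref{lem:pme} collapses in a controlled way as $\tilde\rho \to 1^+$, and care is needed to verify that every strict interior point of the corollary's parameter region leaves enough slack to choose $\tilde\rho > 1$ so that Theorem~\ref{lem:pme} applies simultaneously with the integrability constraint $\tilde p \geq q$. Once this continuity is confirmed, the conversion of the estimate into the final form is routine via log-convexity of $L^r$-norms and the additive $+1$ on the right-hand side.
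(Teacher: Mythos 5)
Your proposal is correct and follows essentially the same route as the paper: apply Theorem \ref{lem:pme} with the data-integrability exponent $\eta=\tilde\rho\in(1,\rhoe]$ taken close to $1$ (legitimate since $L^1\cap L^{\rhoe}\subset L^1\cap L^{\tilde\rho}$), check that the admissible $(\tilde p,\sigma_t,\sigma_x)$-window then covers any interior point of the corollary's parameter region, and conclude by the embedding $\WSR{\sigma_t}{\tilde p}(0,T;\WSR{\sigma_x}{\tilde p}(\R^d))\subset \WSR{\sigma_t}{q}(0,T;\WSR{\sigma_x}{q}(\calo))$ for $\tilde p\ge q$ together with $x^{a}\le x^{b}+1$ to transfer the constants. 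The only cosmetic difference is that the paper fixes $p=m/\mu$ resp.\ $p=s(m-1)+1$ and varies only $\eta$ (treating $s=0$ by a separate choice $s_0>0$), whereas you let $\tilde p$ vary as well and encode everything in the explicit compatibility condition $\tilde\rho\sigma_t+(m-1+\tilde\rho)\sigma_x/2<1$, which handles all cases uniformly.
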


The methods employed in this work are inspired by Tadmor and Tao \cite{TaT07} and rely on the kinetic form of \eqref{pme_sys}, that is, with $\f(t,x,\vf):=1_{\vf<\uf(t,x)}-1_{\vf<0}$, 
\begin{align}
\partial_{t}\f-m|\vf|^{m-1}\Delta_{x}\f & =\partial_{\vf}q+S(t,x)\delta_{u(t,x)}(\vf),\label{pme_sys_kin}
\end{align}
for a non-negative measure $q$, which allows the usage of averaging lemmata and real interpolation. An essential difference to purely spatial regularity consists in the necessity to work with anisotropic fractional Sobolev spaces, which only in their homogeneous form are nicely adapted to the Fourier analytic methods of this work, much in contrast to the purely spatial case in \cite{Ges17}. This leads to the so-called dominating mixed anisotropic Besov spaces introduced by Schmeisser and Triebel \cite{ScT87}. Passing from these homogeneous anisotropic spaces to standard inhomogeneous fractional Sobolev spaces is delicate and treated in detail below. A main ingredient in the proof of optimal regularity in \cite{Ges17} was the existence of singular moments $\int_{t,x,v}|v|^{-\gamma}q$ for $\gamma\in(0,1)$. This ceases to be true for general $L^{1}$-initial data. This difficulty is overcome in the present work by treating separately the degeneracy at $|v|=0$ and the singularity at $|v|=\infty$ as they appear in \eqref{pme_sys_kin}. This also necessitates to make use of the equation \eqref{pme_sys_kin} in the case of small spatial modes $\xi$ in order to obtain optimal time regularity, see Corollary \ref{cor:av3} below.

  \textbf{Comments on the literature:}  The (spatial) regularity of solutions to porous medium equations in Sobolev spaces has previously been considered in \cite{Ebm05,Ges17,TaT07}. Since our main focus is on time-space regularity, we refer to \cite{Ges17} for a more detailed account on the available literature in this regard.
  
  In the case of non-negative solutions the spatial regularity of special types of powers of solutions has been investigated in the literature. For example, much work is devoted to the pressure defined by $v:=\frac{m}{m-1}u^{m-1}$ (cf.\@ e.g.\@ V\'azquez \cite{V07} and the references therein). In the recent work \cite{GS16}, Gianazza and Schwarzacher proved higher integrability for nonnegative, local weak solutions to forced porous medium equations in the sense that $u^{\frac{m+1}{2}} \in L^{2+\eps}_{loc}((0,T);W^{1,2+\eps}_{loc})$ for all $\eps>0$ small enough. In \cite{BDKS18}, this result was generalized by Bögelein, Duzaar, Korte, and Scheven.
  
  The analysis of regularity in time of solutions to porous medium equations (without forcing) has a long history initiated by Aronson and Bénilan in \cite{AB79} and continued by Crandall, Pazy, Tartar in \cite{CPT79} and Bénilan, Crandall in \cite{BC81}, where it has been shown that 
  \begin{equation}\label{eq:time_l1}
    \partial_t u \in L^1_{loc}((0,\infty);L^1(\R^d))
  \end{equation}
  for $u_0 \in L^1(\R^d)$. Subsequently, in \cite{CP82,CP82-2}, Crandall and Pierre have devoted considerable effort to relax the required assumptions on the nonlinearity $\psi$ in the case of generalized porous medium equations
  \begin{align}\label{gen_pme_sys}
    \partial_t\uf-\Delta \psi(\uf) &= 0 \quad \tin (0,T)\times \R^d.
   \end{align} 
 More precisely, in \cite{CP82} assuming the global generalized homogeneity condition 
 \begin{equation}\label{eq:intro-homogeneity}
   \nu \frac{\psi(r)\psi''(r)}{(\psi'(r))^2}\in [m,M],
 \end{equation}
 for some  $0<m<M$, $\nu \in \set{\pm 1}$ and all $r\in\R$, \eqref{eq:time_l1} was recovered. \\
 It should be noted that the methods developed in these works are restricted to the non-forced case $S\equiv0$. In fact, for $S\not\equiv 0$, the linear case $m=1$ demonstrates that \eqref{eq:time_l1} should not be expected. We are not aware of results proving regularity in time in Sobolev spaces for porous medium equations with non-vanishing forcing. In this sense, restricting to regularity in time alone, the results of the present work can be regarded as the (partial) extension of the results of \cite{AB79,BC81,CPT79,CP82,CP82-2} to non-vanishing forcing.
 
 We are not aware of previous results on mixed time and space regularity in Sobolev spaces for solutions to porous medium equations.
 
 For simplicity of the presentation we restrict to the nonlinearity $\psi(u)=u^{[m]}$ in this work. However, the methods that we present are not restricted to this case, as long as $\psi$ satisfies a non-linearity condition as in \cite{Ges17}. In addition, by means of a velocity decomposition, i.e.\ writing 
    $$u(t,x) = \sum_{i=1}^K u^i(t,x) := \sum_{i=1}^K \int_v \vp^i(v)f(t,x,v) \dd v,$$
 where $\vp^i$, $i=1,\dots,K$ is a smooth decomposition of the unity, such a non-linearity condition only needs to be supposed locally at points of degeneracy. This is in contrast to the assumptions, such as \eqref{eq:intro-homogeneity}, supposed in the series of works \cite{AB79,BC81,CPT79,CP82,CP82-2} mentioned above, which can be regarded as \textit{global} generalized homogeneity conditions.

\textbf{Structure of this work:} In Section \ref{FctSpc} we collect information on the class of homogeneous and inhomogeneous anisotropic, dominating mixed derivative spaces employed in this work. The optimality of the obtained estimates is indicated in Section \ref{Scale} by scaling arguments and by explicit computations in case of the Barenblatt solution. In Section \ref{AvLem} we provide general averaging lemmata (Lemma \ref{lem:av} and Lemma \ref{lem:av2}) in the framework of homogeneous dominating mixed derivative spaces and translate them to more standard inhomogeneous anisotropic fractional Sobolev spaces (Corollary \ref{cor:av0}, Corollary \ref{cor:av} and Corollary \ref{cor:av3}). In this formulation, they imply the main result by their application to the porous medium equation in Section \ref{App_PME}.

\section{Preliminaries, Notation and Function Spaces}\label{FctSpc}

We use the notation $a\lesssim b$ if there is a universal constant $C>0$ such that $a\le Cb$. We introduce $a\gtrsim b$ in a similar manner, and write $a \sim b$ if $a\lesssim b$ and $a\gtrsim b$.
For a Banach space $X$ and $I\subset \R$ we denote by $C(I;X)$ the space of bounded and continuous $X$-valued functions endowed with the norm $\|f\|_{C(I;X)}:=\sup_{t\in I}\|f(t)\|_X$. If $X=\R$ we write $C(I)$. For $k\in \N\cup\set{\infty}$, the space of $k$-times continuously differentiable functions is denoted by $C^k(I;X)$. The subspace of $C^k(I;X)$ consisting of compactly supported functions is denoted by $C^k_c(I;X)$. Moreover, we write $\calm_{TV}$ for the space of all measures with finite total variation.
Throughout this article we use several types of $L^p$-based function spaces.
For a Banach space $X$ and $p\in[1,\infty]$, we endow the Bochner-Lebesgue space $\LR{p}(\R;X)$ with the usual norm
\begin{align*}
 \norm{f}_{\LR{p}(\R;X)}&:=\left(\int_{\R}\norm{f(t)}_{X}^p \dd t\right)^\frac{1}{p},
\end{align*}
with the standard modification in the case of $p=\infty$. For $k\in \N_0:=\N\cup\set{0}$, the corresponding $X$-valued Sobolev space is denoted by $\WSR{k}{p}(\R;X)$. If $\sigma\in (0,\infty)$ is non-integer (say $\sigma=k+r$, with $k\in\N_0$ and $r\in(0,1)$), then we define the $X$-valued Sobolev-Slobodecki\u{\i} space $\WSR{\sigma}{p}(\R;X)$ as the space of functions in $\WSR{k}{p}(\R;X)$ with
\begin{align}\label{hom_sob_norm}
 \norm{f}_{\DSR{\sigma}{p}(\R;X)}:=\left(\int_{\R\times\R}\frac{\norm{D^kf(t)-D^kf(s)}_X^p}{|t-s|^{rp+1}}\dd s \dd t\right)^{\frac{1}{p}}<\infty,
\end{align}
again with the usual modification in the case of $p=\infty$. Further, let $\DSR{\sigma}{p}(\R;X)$ be the space of all locally integrable $X$-valued functions $f$ for which \eqref{hom_sob_norm} is finite. If we factor out the equivalence relation $\sim$, where $f\sim g$ if $\norm{f-g}_{\DSR{\sigma}{p}(\R;X)}=0$, the space $\DSR{\sigma}{p}(\R;X)$ equipped with the norm $\norm{\cdot}_{\DSR{\sigma}{p}(\R;X)}$ is a Banach space.

Moreover, in order to treat regularity results in both time and space efficiently, we introduce spaces with dominating mixed derivatives set in the framework of Fourier analysis, that is, corresponding Besov spaces. These spaces have a long history in the literature, beginning with the works of S.\@ M.\@ Nikol'ski\u{\i} \cite{Nik62, Nik63b, Nik63a}. We refer the reader to the monograph of Schmeisser and Triebel \cite{ScT87} and the references therein. We adopt the notation of \cite{ScT87} for the non-homogeneous spaces. Corresponding homogeneous Besov spaces are treated in \cite{Tri77c, Tri77d}; we adapt the notation to be consistent with the one in \cite{ScT87}.
We recall from \cite{Tri77c} the definition of the spaces $\calz$ and $\calz'$ replacing the standard Schwartz space $\cals=\cals(\R^{d+1})$ and the space of tempered distributions $\cals'=\cals'(\R^{d+1})$ in the definition of homogeneous spaces. As we are concerned with function spaces in the time variable $t\in \R$ and the spatial variable $x\in \R^d$, we introduce besides $\R^{d+1}=\R_t\times \R^d_x$ also the subset
\begin{align*}
 \dot\R^{d+1}:=\setc{(t,x)\in\R^{d+1}}{t|x|\neq 0}.
\end{align*}
Note that in \cite{Tri77c}, the notation $\overset{+}{\R^2}$ is used, which gives a better geometrical intuition of the set taken out of $\R^{2}$. However, for typesetting reasons, we have decided for the notation $\dot\R^{d+1}$. Then we let $\dot\cald$ be the subset of the standard space of test functions $\cald$, consisting of functions with compact support in $\dot\R^{d+1}$ and view it as a locally convex space equipped with the canonical topology. Its dual space is denoted by $\dot\cald'$, and is referred to as distributions over $\dot\R^{d+1}$. We define $\calz$ as the image of $\dot\cald\subset \cals$ under the Fourier transform $\calf$ in time and space, equipped with the inherited topology from $\dot\cald$. The corresponding dual space is denoted by $\calz'$. Since $\calf:\dot\cald\to\calz$, we can define by duality the Fourier transform $\calf:\calz'\to\dot\cald'$.

It holds $\calz\subset \cals$ with a continuous embedding, but the fact that $\calz$ is not densely embedded in $\cals$ prevents one from stating $\cals'\subset \calz'$. However, we note that for $p\in(1,\infty)$, the space $\LR{p}(\R^{d+1})$ can be viewed both as subspace of $\cals'$ and as a subspace of $\calz'$, cf.\@ Theorem 3.3 in \cite{Tri77c}.

Let $\vp$ be a smooth function supported in the annulus $\{\xi\in\R^{d}:\,\frac{1}{2}\le|\xi|\le2\}$ and such that
\[
\sum_{j\in\Z}\vp_j(\xi):=\sum_{j\in\Z}\vp(2^{-j}\xi)=1,\quad\forall\xi\in\R^{d}\setminus\set{0}.
\]
Similarly, let $\eta$ be a smooth function supported in $\vpp{-2,-\frac12}\cup \vpp{\frac12,2}$ with
\[
\sum_{l\in\Z}\eta_l(\tau):=\sum_{l\in\Z}\eta(2^{-l}\tau)=1,\quad\forall\tau\in\R\setminus\set{0}.
\]
Moreover, define $\phi_j:=\vp_j$ for $j\ge 1$ and $\phi_0:=1-\sum_{j\ge 1}\phi_j$ as well as $\psi_l:=\eta_l$ for $l\ge 1$ and $\psi_0:=1-\sum_{l\ge 1}\eta_l$. We will use the shorthand notation $\eta_l\vp_j$ for the function $(\tau,\xi)\mapsto \eta_l(\tau)\vp_j(\xi)$, and similarly for combinations of $\psi_l$ and $\phi_j$.

\begin{defn}\label{defn:spaces}
 Let $\sigma_i\in\vpp{-\infty,\infty}$, $i=t,x$, and $p\in[1,\infty]$. Set $\overline{\sigma}:=(\sigma_t,\sigma_x)$.
 \begin{enumerate}
  \item The homogeneous Besov space with dominating mixed derivatives $S^{\overline{\sigma}}_{p,\infty}\dot B(\R^{d+1})$ is given by
  \begin{align*}
   S^{\overline{\sigma}}_{p,\infty}\dot B:=S^{\overline{\sigma}}_{p,\infty}\dot B(\R^{d+1}):=\setc{f\in\calz'}{\|f\|_{S^{\overline{\sigma}}_{p,\infty}\dot B}<\infty},
  \end{align*}
  with the norm
  \begin{align*}
   \|f\|_{S^{\overline{\sigma}}_{p,\infty}\dot B}:=\sup_{l,j\in\Z} 2^{\sigma_t l}2^{\sigma_x j}\|\calf^{-1}_{t,x}\eta_l \vp_j\calf_{t,x} f\|_{\LR{p}(\R^{d+1})}.
  \end{align*}
  Similarly, the space $S^{\overline{\sigma}}_{p,\infty,(\infty)}\dot B(\R^{d+1})$ is given via the norm
  \begin{align*}
   \|f\|_{S^{\overline{\sigma}}_{p,\infty,(\infty)}\dot B}:=\sup_{l,j\in\Z} 2^{\sigma_t l}2^{\sigma_x j}\|\calf^{-1}_{t,x}\eta_l \vp_j\calf_{t,x} f\|_{\LR{p,\infty}(\R^{d+1})}.
  \end{align*}
  \item The homogeneous Chemin-Lerner spaces $\td{L}^{p}_{t}\dot B^{\sigma_x}_{p,\infty}(\R^{d+1})$ and $\td{L}^{p}_{x}\dot B^{\sigma_t}_{p,\infty}(\R^{d+1})$ are given by
  \begin{align*}
   \td{L}^{p}_{t}\dot B^{\sigma_x}_{p,\infty}&:=\td{L}^{p}_{t}\dot B^{\sigma_x}_{p,\infty}(\R^{d+1}):=\setc{f\in\cals'}{\|f\|_{\td{L}^{p}_{t}\dot B^{\sigma_x}_{p,\infty}}<\infty}, \\
   \td{L}^{p}_{x}\dot B^{\sigma_t}_{p,\infty}&:=\td{L}^{p}_{x}\dot B^{\sigma_t}_{p,\infty}(\R^{d+1}):=\setc{f\in\cals'}{\|f\|_{\td{L}^{p}_{x}\dot B^{\sigma_t}_{p,\infty}}<\infty},
  \end{align*}
  with the norms
  \begin{align*}
   \|f\|_{\td{L}^{p}_{t}\dot B^{\sigma_x}_{p,\infty}}&:=\sup_{j\in\Z} 2^{\sigma_x j}\|\calf^{-1}_{x} \vp_j\calf_{x} f\|_{\LR{p}(\R^{d+1})}, \\
   \|f\|_{\td{L}^{p}_{x}\dot B^{\sigma_t}_{p,\infty}}&:=\sup_{l\in\Z} 2^{\sigma_t l}\|\calf^{-1}_{t} \eta_l\calf_{t} f\|_{\LR{p}(\R^{d+1})},
  \end{align*}
  respectively.
  \item The non-homogeneous Besov space with dominating mixed derivatives $S^{\overline{\sigma}}_{p,\infty}B(\R^{d+1})$ is given by
  \begin{align*}
   S^{\overline{\sigma}}_{p,\infty}B:=S^{\overline{\sigma}}_{p,\infty}B(\R^{d+1}):=\setc{f\in\cals'(\R^{d+1})}{\|f\|_{S^{\overline{\sigma}}_{p,\infty} B}<\infty},
  \end{align*}
  with the norm
  \begin{align*}
   \|f\|_{S^{\overline{\sigma}}_{p,\infty}B}:=\sup_{l,j\ge 0} 2^{\sigma_t l}2^{\sigma_x j}\|\calf^{-1}_{t,x}\psi_l \phi_j\calf_{t,x} f\|_{\LR{p}(\R^{d+1})}.
  \end{align*}
  \item The non-homogeneous Chemin-Lerner space $\td{L}^{p}_{t} B^{\sigma_x}_{p,\infty}(\R^{d+1})$ is given by
  \begin{align*}
   \td{L}^{p}_{t} B^{\sigma_x}_{p,\infty}&:=\td{L}^{p}_{t} B^{\sigma_x}_{p,\infty}(\R^{d+1}):=\setc{f\in\cals'}{\|f\|_{\td{L}^{p}_{t} B^{\sigma_x}_{p,\infty}}<\infty},
  \end{align*}
  with the norm
  $\displaystyle \|f\|_{\td{L}^{p}_{t} B^{\sigma_x}_{p,\infty}}:=\sup_{j\ge 0} 2^{\sigma_x j}\|\calf^{-1}_{x} \phi_j\calf_{x} f\|_{\LR{p}(\R^{d+1})}$.
 \end{enumerate}
\end{defn}

\begin{rem}
 All spaces considered in Definition \ref{defn:spaces} are Banach spaces, cf.\@ \cite{Tri77c}. Note that for $\vt\in\R$, we use the notation $\vt\overline{\sigma}=(\vt\sigma_t,\vt\sigma_x)$. In this note, we restrict ourselves to the third index of the Besov-type space being infinity, in which case the spaces $S^{\overline{\sigma}}_{p,\infty}B$ are sometimes called \emph{Nikol'ski\u{\i} spaces of dominating mixed derivatives} in the literature. However, there is no conceptual limitation to consider also third indices $q\in[1,\infty]$. On the same token, one could also consider spaces with different indices $p$ and $q$ in different directions. We refer the reader to the monograph \cite{ScT87} for more details concerning such spaces.
\end{rem}

\begin{lem}\label{lem:emb_0}
 Let $\kappa_x\ge 0$ and $p\in [1,\infty]$. Then 
 \[
 \td{L}^{p}_{t} B^{\kappa_x+\eps}_{p,\infty}(\R^{d+1})\subset \LR{p}(\R;\WSR{\kappa_x}{p}(\R^d))\subset \td{L}^{p}_{t} B^{\kappa_x-\delta}_{p,\infty}(\R^{d+1}),
 \]
  whenever $\eps>0$ and $\delta\in(0,\kappa_x]$.
\end{lem}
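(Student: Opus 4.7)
The plan is to route both embeddings through the classical Littlewood-Paley sandwich
\[
B^{\kappa_x}_{p,1}(\R^d)\subset W^{\kappa_x,p}(\R^d)\subset B^{\kappa_x}_{p,\infty}(\R^d),
\]
applied pointwise in $t\in\R$, combined with the elementary Besov embedding $B^{\sigma+\eps}_{p,\infty}\subset B^{\sigma}_{p,1}$ for $\eps>0$ (which follows by summing the geometric series $\sum_{j\ge 0}2^{-\eps j}$), and Minkowski's inequality to reconcile the Chemin-Lerner norm of $\td L^p_tB^{\sigma}_{p,\infty}$ with the iterated norm of $L^p(\R;B^{\sigma}_{p,\infty}(\R^d))$. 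Writing $P_j:=\calf_x^{-1}\phi_j\calf_x$ for the dyadic spatial frequency blocks, this reduces the lemma to two short computations, one per inclusion.

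For the first inclusion I would use Minkowski's inequality with respect to $\ell^1$ versus $L^p_t$ together with the defining Chemin-Lerner bound $\|P_jf\|_{L^p(\R^{d+1})}\le 2^{-(\kappa_x+\eps)j}\|f\|_{\td L^p_tB^{\kappa_x+\eps}_{p,\infty}}$ to estimate
\[
\|f\|_{L^p(\R;B^{\kappa_x}_{p,1}(\R^d))}\le \sum_{j\ge 0}2^{\kappa_x j}\|P_jf\|_{L^p(\R^{d+1})}\le \|f\|_{\td L^p_tB^{\kappa_x+\eps}_{p,\infty}}\sum_{j\ge 0}2^{-\eps j},
\]
and then apply $B^{\kappa_x}_{p,1}(\R^d)\subset W^{\kappa_x,p}(\R^d)$ under the integral in $t$, which yields the first embedding.

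For the second inclusion I would proceed dually, by pulling a supremum outside a time integral. For each $j\ge 0$, using $\delta\ge 0$,
\[
2^{(\kappa_x-\delta)jp}\|P_jf\|_{L^p(\R^{d+1})}^p=2^{-\delta jp}\int_\R 2^{\kappa_x jp}\|P_jf(t,\cdot)\|_{L^p(\R^d)}^p\,\dt\le \int_\R \|f(t,\cdot)\|_{B^{\kappa_x}_{p,\infty}(\R^d)}^p\,\dt,
\]
so that taking the supremum over $j\ge 0$ and invoking $W^{\kappa_x,p}(\R^d)\subset B^{\kappa_x}_{p,\infty}(\R^d)$ pointwise in $t$ produces $\|f\|_{\td L^p_tB^{\kappa_x-\delta}_{p,\infty}}\lesssim \|f\|_{L^p(\R;W^{\kappa_x,p}(\R^d))}$.

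The main subtlety lies in the comparison between the Sobolev-Slobodecki\u{\i} scale and the Besov scale in the $x$-variable: for non-integer $\kappa_x$ and $p\in(1,\infty)$ one has the sharp identification $W^{\kappa_x,p}=B^{\kappa_x}_{p,p}$, whereas for integer $\kappa_x$ or $p\in\{1,\infty\}$ only the lossy sandwich above is available. The arbitrarily small regularity slacks $\eps$ and $\delta$ in the smoothness indices exactly absorb this non-sharpness, so no case distinction between integer and non-integer $\kappa_x$ is needed; the edge case $\kappa_x=0$ is covered by interpreting $W^{0,p}=L^p$ and using the uniform $L^p$-boundedness of the Littlewood-Paley projectors $P_j$ together with $\sum_jP_j=\id$.
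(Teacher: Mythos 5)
Your argument is correct, and it is precisely the standard Minkowski-plus-Besov-sandwich reasoning that the paper delegates to the citation of Bahouri--Chemin--Danchin (p.~98): Minkowski's inequality to pass between the Chemin--Lerner norm and the Bochner norm, the embeddings $B^{\kappa_x+\eps}_{p,\infty}\subset B^{\kappa_x}_{p,1}\subset W^{\kappa_x,p}\subset B^{\kappa_x}_{p,\infty}$ applied pointwise in $t$, and the $\eps$-slack absorbing the non-sharpness at integer smoothness or endpoint $p$. As a minor remark, for the second inclusion the $\delta$-loss is not even needed in the interchange step, since $\sup_j\int(\cdot)\,\dt\le\int\sup_j(\cdot)\,\dt$ goes in the favorable direction; your extra factor $2^{-\delta jp}\le 1$ is harmless.
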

\begin{proof}
 Follows from \cite[p.\@ 98]{BCD11}.
\end{proof}

\begin{lem}\label{lem:emb_1}
 Let $\kappa_t,\kappa_x>0$ and $p\in [1,\infty)$. Then $S^{\overline{\kappa}}_{p,\infty}B\subset \WSR{\sigma_t}{p}(\R;\WSR{\sigma_x}{p}(\R^d))$ whenever $\sigma_t\in[0,\kappa_t)$ and $\sigma_x\in[0,\kappa_x)$.
\end{lem}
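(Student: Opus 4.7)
The plan is to pass through a dominating mixed derivative Besov space with summation (third) index $p$ instead of $\infty$, which by a Fubini argument coincides with a product of vector-valued Besov spaces, and then to invoke the classical identification of Besov with Sobolev-Slobodecki\u{\i} spaces at non-integer regularity. Since $\sigma_t<\kappa_t$ and $\sigma_x<\kappa_x$, I first pick non-integer intermediate indices $\sigma'_t\in(\sigma_t,\kappa_t)$ and $\sigma'_x\in(\sigma_x,\kappa_x)$, chosen strictly positive even when $\sigma_i=0$, so that the forthcoming Besov/Sobolev identification applies.

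The elementary ingredient is that for $f\in S^{\overline{\kappa}}_{p,\infty}B$, the Littlewood-Paley blocks $f_{l,j}:=\calf^{-1}_{t,x}(\psi_l\phi_j\calf_{t,x}f)$ satisfy, by the very definition of the norm, $\|f_{l,j}\|_{\LR{p}(\R^{d+1})}\le 2^{-\kappa_t l}\,2^{-\kappa_x j}\,\|f\|_{S^{\overline{\kappa}}_{p,\infty}B}$ for all $l,j\ge 0$. Weighting by $2^{p\sigma'_t l}\,2^{p\sigma'_x j}$ and summing over $(l,j)\in\N_0^2$ leaves only two convergent geometric series, with ratios $2^{-p(\kappa_i-\sigma'_i)}<1$, yielding
\[
\sum_{l,j\ge 0}2^{p\sigma'_t l}\,2^{p\sigma'_x j}\,\|f_{l,j}\|^p_{\LR{p}(\R^{d+1})}\lesssim\|f\|^p_{S^{\overline{\kappa}}_{p,\infty}B}.
\]

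The second and more technical step is to identify, up to an equivalent norm, the left-hand side above with $\|f\|^p_{\WSR{\sigma'_t}{p}(\R;\WSR{\sigma'_x}{p}(\R^d))}$. Two standard facts combine to produce this: first, by Fubini and the Littlewood-Paley characterization of vector-valued Besov spaces, the double sum above equals $\|f\|^p_{B^{\sigma'_t}_{p,p}(\R;B^{\sigma'_x}_{p,p}(\R^d))}$; second, for non-integer $s>0$ and any Banach space $X$ one has $B^s_{p,p}(\R;X)=\WSR{s}{p}(\R;X)$ with equivalent norms. Applying the latter once with $X=\R$ and once with $X=\WSR{\sigma'_x}{p}(\R^d)$ produces the identification. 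Finally, the monotonicity $\WSR{\sigma'_i}{p}\hookrightarrow\WSR{\sigma_i}{p}$ for $\sigma_i<\sigma'_i$ (with the convention $\WSR{0}{p}=\LR{p}$) in both the $t$- and $x$-slots concludes. The single real hurdle is the Fubini/Besov identification in the vector-valued, anisotropic setting, which is classical (cf.\@ \cite{ScT87,Tri77c}) but worth spelling out carefully, as it is the only step that does not follow immediately from the definitions.
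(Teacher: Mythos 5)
Your proposal is correct and follows essentially the same route as the paper: both pass through the dominating mixed derivative Besov space with third index $p$ (you prove the embedding $S^{\overline{\kappa}}_{p,\infty}B\subset SB^{\overline{\sigma}'}_{p,p}$ by the explicit geometric-series computation, where the paper cites \cite[Proposition 2.2.3/2]{ScT87}) and then invoke the identification of $SB^{\overline{\sigma}'}_{p,p}$ with the iterated Sobolev--Slobodecki\u{\i} space at non-integer regularity, exactly as in \cite[Remark 2.3.4/4]{ScT87}. The only cosmetic difference is your use of intermediate non-integer indices $\sigma'_i\in(\sigma_i,\kappa_i)$ followed by monotonicity, versus the paper's ``without loss of generality non-integer'' reduction.
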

\begin{proof}
 The proof is a combination of results in \cite{ScT87}, which are written for $\R\times \R$ but also true for $\R\times \R^d$ by an inspection of their respective proofs: Without loss of generality, we can assume that $\sigma_t$ and $\sigma_x$ are non-integer. By \cite[Remark 2.3.4/4]{ScT87}, we have $\WSR{\sigma_t}{p}(\R;\WSR{\sigma_x}{p}(\R^d)) = S B^{\overline{\sigma}}_{p,p}$ (see \cite[Definition 2.2.1/2]{ScT87} for a definition of the latter space). Since by \cite[Proposition 2.2.3/2]{ScT87} we have $S^{\overline{\kappa}}_{p,\infty} B\subset S B^{\overline{\sigma}}_{p,p}$, this yields the claim.
\end{proof}

\begin{lem}\label{lem:emb_2}
 Let $\sigma_t,\sigma_x> 0$ and $p\in[1,\infty]$. Then
  \begin{align*}
   \left(\LR{p}(\R^{d+1})\cap \td{L}^{p}_{x}\dot B^{\sigma_t}_{p,\infty} \cap \td{L}^{p}_{t} \dot B^{\sigma_x}_{p,\infty} \cap S^{\overline{\sigma}}_{p,\infty}\dot B\right) = S^{\overline{\sigma}}_{p,\infty} B
  \end{align*}
  with equivalent norms.
\end{lem}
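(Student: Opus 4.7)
The plan is to establish the two set inclusions separately by carefully matching the inhomogeneous Littlewood--Paley system $\{\psi_l\phi_j\}_{l,j\geq 0}$ against the homogeneous one $\{\eta_l\vp_j\}_{l,j\in\Z}$. The key structural facts are that $\psi_l=\eta_l$ and $\phi_j=\vp_j$ for $l,j\ge 1$ (so the two systems coincide at high frequencies), while $\psi_0$ and $\phi_0$ are smooth, compactly supported ``low-frequency bumps'' satisfying $\sum_{l\geq 0}\psi_l=\sum_{j\geq 0}\phi_j=1$. Throughout, I will freely use the uniform-in-scale $L^p$-boundedness of the dyadic multipliers $\calf^{-1}_x\vp_j\calf_x$, $\calf^{-1}_t\eta_l\calf_t$ (and of the fixed multipliers $\calf^{-1}_x\phi_0\calf_x$, $\calf^{-1}_t\psi_0\calf_t$), all of which follow from Young's inequality together with the scaling identity $\|\calf^{-1}\vp_j\|_{L^1}=\|\calf^{-1}\vp\|_{L^1}$.

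For the inclusion $\supseteq$, I will split the supremum defining $\|f\|_{S^{\overline\sigma}_{p,\infty}B}$ into the four cases corresponding to $(l,j)\in\{(0,0)\}$, $\{0\}\times\N$, $\N\times\{0\}$, and $\N\times\N$. In the high-high case $\psi_l\phi_j=\eta_l\vp_j$, reproducing the $S^{\overline\sigma}_{p,\infty}\dot B$ piece verbatim. The mixed case $l=0$, $j\ge 1$ is handled by writing $\calf^{-1}(\psi_0\phi_j\widehat f)=(\calf^{-1}_t\psi_0\calf_t)(\calf^{-1}_x\vp_j\calf_x f)$ and using $L^p$-boundedness of the time multiplier, which reveals the Chemin--Lerner norm $\|f\|_{\td L^p_t\dot B^{\sigma_x}_{p,\infty}}$; the case $l\ge 1$, $j=0$ is symmetric. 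The low-low case $l=j=0$ is bounded by $\|f\|_{L^p}$ via Young's inequality applied to the Schwartz kernel of $\psi_0\phi_0$.

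For the reverse inclusion $\subseteq$, the positivity $\sigma_t,\sigma_x>0$ becomes essential. Writing $f=\sum_{l,j\ge 0}\calf^{-1}(\psi_l\phi_j\widehat f)$ and estimating via the convergent geometric series $\sum_{l,j\ge 0}2^{-\sigma_tl-\sigma_xj}$ yields $\|f\|_{L^p}\lesssim\|f\|_{S^{\overline\sigma}_{p,\infty}B}$. For the Chemin--Lerner norm $\|f\|_{\td L^p_t\dot B^{\sigma_x}_{p,\infty}}=\sup_{j\in\Z}2^{\sigma_xj}\|\calf^{-1}_x\vp_j\calf_x f\|_{L^p}$, I split the supremum over $j\in\Z$: for $j\ge 1$ I use $\vp_j=\phi_j$ and $\calf^{-1}_x\vp_j\calf_x f=\sum_{l\ge 0}\calf^{-1}(\psi_l\phi_j\widehat f)$, so that the weighted bound costs only $\sum_{l\ge 0}2^{-\sigma_tl}\lesssim 1$; for $j\le 0$ the weight $2^{\sigma_xj}\le 1$ combines with the already-established $L^p$ bound. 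The norm $\|f\|_{\td L^p_x\dot B^{\sigma_t}_{p,\infty}}$ is treated symmetrically, and $\|f\|_{S^{\overline\sigma}_{p,\infty}\dot B}$ is split over the four quadrants of $\Z^2$: the high-high quadrant matches directly, while the other three are handled by the same recipe, using that at least one of $2^{\sigma_tl},2^{\sigma_xj}$ is bounded by $1$ there.

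The only mildly delicate point is the uniform-in-scale boundedness of the various compositions of dyadic multipliers, which is handled by the standard rescaling argument on Schwartz kernels. The strict positivity of both $\sigma_t$ and $\sigma_x$ enters crucially to ensure the appearing geometric series converge; without it, only a weaker logarithmic-type or no comparison would hold.
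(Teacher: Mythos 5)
Your proposal is correct and follows essentially the same route as the paper's proof: the same four-way splitting of the dyadic indices in both inclusions, the same uniform $L^p$-multiplier bounds for the low-frequency bumps, and the same use of the convergent geometric series enabled by $\sigma_t,\sigma_x>0$. The only (cosmetic) difference is that for the low-frequency ranges $j\le 0$ or $l\le 0$ you absorb terms into the already-established $L^p$ bound, whereas the paper reroutes them through the auxiliary multipliers $\td\psi_0=\psi_0+\psi_1$ and $\td\phi_0=\phi_0+\phi_1$; both are valid.
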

\begin{proof}
 As smooth and compactly supported functions, $\psi_0$ and $\phi_0$ extend to $L^p$ multipliers for all $p\in[1,\infty]$, see e.g.\@ \cite{BeL76}.\newline
 For $f\in\left(\LR{p}(\R^{d+1})\cap \td{L}^{p}_{x}\dot B^{\sigma_t}_{p,\infty} \cap \td{L}^{p}_{t} \dot B^{\sigma_x}_{p,\infty} \cap S^{\overline{\sigma}}_{p,\infty}\dot B\right)\subset\cals'(\R^{d+1})$ we obtain
  \begin{align*}
   \|f\|_{S^{\overline{\sigma}}_{p,\infty}B}&\le \|\calf^{-1}_{t,x}\psi_0 \phi_0\calf_{t,x} f\|_{\LR{p}_{t,x}} + \sup_{l> 0} 2^{\sigma_t l}\|\calf^{-1}_{t,x}\eta_l \phi_0\calf_{t,x} f\|_{\LR{p}_{t,x}} \\
   &\quad + \sup_{j> 0} 2^{\sigma_x j}\|\calf^{-1}_{t,x}\psi_0 \vp_j\calf_{t,x} f\|_{\LR{p}_{t,x}} + \sup_{l,j> 0} 2^{\sigma_t l}2^{\sigma_x j}\|\calf^{-1}_{t,x}\eta_l \vp_j\calf_{t,x} f\|_{\LR{p}_{t,x}} \\
   &\lesssim \|f\|_{\LR{p}_{t,x}} + \sup_{l> 0} 2^{\sigma_t l}\|\calf^{-1}_{t}\eta_l \calf_{t} f\|_{\LR{p}_{t,x}} \\
   &\quad + \sup_{j> 0} 2^{\sigma_x j}\|\calf^{-1}_{x} \vp_j\calf_{x} f\|_{\LR{p}_{t,x}} + \sup_{l,j> 0} 2^{\sigma_t l}2^{\sigma_x j}\|\calf^{-1}_{t,x}\eta_l \vp_j\calf_{t,x} f\|_{\LR{p}_{t,x}} \\
   &\lesssim \|f\|_{\LR{p}_{t,x}} + \|f\|_{\td{L}^{p}_{x}\dot B^{\sigma_t}_{p,\infty}} + \|f\|_{\td{L}^{p}_{t}\dot B^{\sigma_x}_{p,\infty}} + \|f\|_{S^{\overline{\sigma}}_{p,\infty}\dot B}.
  \end{align*}
 Conversely, for $f\in S^{\overline{\sigma}}_{p,\infty}B$, we estimate the four contributions corresponding to $\LR{p}(\R^{d+1})$, $\td{L}^{p}_{x}\dot B^{\sigma_t}_{p,\infty}$, $\td{L}^{p}_{t} \dot B^{\sigma_x}_{p,\infty}$, and $S^{\overline{\sigma}}_{p,\infty}\dot B$ separately. We start by noting that due to $\sigma_t,\sigma_x>0$, the invariance of multiplier norms with respect to dilation, $\eta_l=\eta_l\td\psi_0$ for $l\le 0$ and $\vp_j=\vp_j\td\phi_0$ for $j\le 0$, where $\td\psi_0:=\psi_0+\psi_1$ and $\td\phi_0:=\phi_0+\phi_1$, we have
 \begin{align*}
  \sup_{l\le 0}2^{\sigma_t l}\|\calf^{-1}_{t} \eta_l\calf_{t} f\|_{\LR{p}_{t,x}}&\lesssim \|\calf^{-1}_{t} \td\psi_0\calf_{t} f\|_{\LR{p}_{t,x}}, \\
  \sup_{j\le 0}2^{\sigma_x j}\|\calf^{-1}_{x} \vp_j\calf_{x} f\|_{\LR{p}_{t,x}}&\lesssim \|\calf^{-1}_{x} \td\phi_0\calf_{x} f\|_{\LR{p}_{t,x}}.
 \end{align*}
 Furthermore we use the fact that for $\sigma>0$ one has the estimate $\sum_{n\ge 0}|a_n|\lesssim \sup_{n\ge 0} 2^{\sigma n}|a_n|$ for any sequence $(a_n)\subset \R$ with a constant depending on $\sigma$.
 With this, we obtain
 \begin{align*}
  \|f\|_{\LR{p}_{t,x}}&\le \sum_{l,j\ge 0} \|\calf^{-1}_{t,x}\psi_l \phi_j\calf_{t,x} f\|_{\LR{p}_{t,x}} \lesssim \sup_{l,j\ge 0} 2^{\sigma_t l}2^{\sigma_x j}\|\calf^{-1}_{t,x}\psi_l \phi_j\calf_{t,x} f\|_{\LR{p}_{t,x}}\le \|f\|_{S^{\overline{\sigma}}_{p,\infty}B}.
 \end{align*}
 Next, we compute
 \begin{align*}
  \|f\|_{\td{L}^{p}_{x}\dot B^{\sigma_t}_{p,\infty}}&\le \sup_{l\le 0} 2^{\sigma_t l}\|\calf^{-1}_{t}\eta_l \calf_{t} f\|_{\LR{p}_{t,x}} + \sup_{l> 0} 2^{\sigma_t l}\|\calf^{-1}_{t}\psi_l \calf_{t} f\|_{\LR{p}_{t,x}} \\
  &\lesssim \|\calf^{-1}_{t}\td\psi_0 \calf_{t} f\|_{\LR{p}_{t,x}} + \sup_{l> 0} 2^{\sigma_t l}\|\calf^{-1}_{t}\psi_l \calf_{t} f\|_{\LR{p}_{t,x}} \\
  &\le \sum_{j\ge 0} \|\calf^{-1}_{t,x}\td\psi_0 \phi_j\calf_{t,x} f\|_{\LR{p}_{t,x}} + \sup_{l>0}\sum_{j\ge 0} 2^{\sigma_t l}\|\calf^{-1}_{t,x}\psi_l \phi_j\calf_{t,x} f\|_{\LR{p}_{t,x}} \\
  &\lesssim \sup_{j\ge 0} 2^{\sigma_x j} \|\calf^{-1}_{t,x}\td\psi_0 \phi_j\calf_{t,x} f\|_{\LR{p}_{t,x}} + \sup_{l>0,j\ge 0} 2^{\sigma_t l}2^{\sigma_x j}\|\calf^{-1}_{t,x}\psi_l \phi_j\calf_{t,x} f\|_{\LR{p}_{t,x}} \lesssim \|f\|_{S^{\overline{\sigma}}_{p,\infty}B}.
 \end{align*}
 By analogy, $\|f\|_{\td{L}^{p}_{t}\dot B^{\sigma_x}_{p,\infty}}\lesssim \|f\|_{S^{\overline{\sigma}}_{p,\infty}B}$. Hence, it remains to control $\|f\|_{S^{\overline{\sigma}}_{p,\infty}\dot B}$. We split this term into the four contributions
 \begin{align*}
  \|f\|_{S^{\overline{\sigma}}_{p,\infty}\dot B}&=\sup_{l,j> 0} 2^{\sigma_t l}2^{\sigma_x j}\|\calf^{-1}_{t,x}\psi_l \phi_j\calf_{t,x} f\|_{\LR{p}_{t,x}} + \sup_{l>0,j\le 0} 2^{\sigma_t l}2^{\sigma_x j}\|\calf^{-1}_{t,x}\psi_l \vp_j\calf_{t,x} f\|_{\LR{p}_{t,x}} \\
  &\quad + \sup_{l\le 0, j>0} 2^{\sigma_t l}2^{\sigma_x j}\|\calf^{-1}_{t,x}\eta_l \phi_j\calf_{t,x} f\|_{\LR{p}_{t,x}} + \sup_{l,j\le 0} 2^{\sigma_t l}2^{\sigma_x j}\|\calf^{-1}_{t,x}\eta_l \vp_j\calf_{t,x} f\|_{\LR{p}_{t,x}}.
 \end{align*}
 The first contribution is immediately estimated by $\|f\|_{S^{\overline{\sigma}}_{p,\infty}B}$. For the second contribution, we have
 \begin{align*}
  \sup_{l>0,j\le 0} 2^{\sigma_t l}2^{\sigma_x j}\|\calf^{-1}_{t,x}\psi_l \vp_j\calf_{t,x} f\|_{\LR{p}_{t,x}}\lesssim \sup_{l>0} 2^{\sigma_t l}\|\calf^{-1}_{t,x}\psi_l \td\phi_0\calf_{t,x} f\|_{\LR{p}_{t,x}}\le \|f\|_{S^{\overline{\sigma}}_{p,\infty}B},
 \end{align*}
 and a similar estimate holds for the third contribution. For the fourth contribution, we have
 \begin{align*}
  \sup_{l,j\le 0} 2^{\sigma_t l}2^{\sigma_x j}\|\calf^{-1}_{t,x}\eta_l \vp_j\calf_{t,x} f\|_{\LR{p}_{t,x}}\lesssim \|\calf^{-1}_{t,x}\td\psi_0 \td\phi_0\calf_{t,x} f\|_{\LR{p}_{t,x}}.
 \end{align*}
 This concludes the proof.
\end{proof}

\section{Optimality of Estimates via Scaling}\label{Scale}
It is well known that in the linear case $m=1$ one has estimates of the form
\begin{align}\label{scale_lin_lp_est}
 \norm{\uf}_{\LR{1}_t \DSR{\sigma_x}{1}_x}\le c(\sigma_x) \vpp{\norm{\uf_0}_{\LR{1}_x} + \norm{S}_{\LR{1}_{t,x}}},
\end{align}
for all $\sigma_x< 2$. In the case $m>1$, such an estimate cannot be true for \emph{any} $\sigma_x>0$ anymore. Intuitively, this is due to the linear nature of \eqref{scale_lin_lp_est} (observe that the integrability exponent is equal on both sides of the inequality), which is not compatible with the nonlinear equation \eqref{pme_sys}. We will make this intuition more precise by the following lemma based on a scaling argument.

\begin{lem}\label{lem:scaling}
 Let $T>0$, $m>1$, $\mu\in [1,m]$, $p\in [1,\infty)$ and $\sigma_t,\sigma_x \ge 0$. Assume that there is a constant $c=c(m,\mu,p,\sigma_t,\sigma_x)>0$ such that
\begin{align}\label{scale_pme_lp_est}
 \norm{\uf^{[\mu]}}_{\DSR{\sigma_t}{p}(0,T;\DSR{\sigma_x}{p}(\R^d))}^p\le c \vpp{\norm{\uf_0}_{\LR{1}(\R^d)} + \norm{S}_{\LR{1}(0,T;\LR{1}(\R^d))}}
\end{align}
for all solutions $\uf$ to \eqref{pme_sys}. Then 
\begin{align}\label{scaling_const}
\begin{split}
 p &\le \frac{m}{\mu+(m-1)\sigma_t}\le \frac{m}{\mu}, \\
 \sigma_t&\le \frac{m-\mu p}{p(m-1)}\le \frac{m-\mu}{m-1}, \quad \tand\\
 \sigma_x &= \frac{\mu p -1}{p}\frac{2}{m-1}\le \frac{2(\mu-\sigma_t)}{m}\le \frac{2\mu}{m}. 
\end{split}
\end{align}
In particular, if $\sigma_t=\frac{m-\mu}{m-1}$, then $p=1$ and $\sigma_x= \frac{2(\mu-1)}{m-1}$.
\end{lem}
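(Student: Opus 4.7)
The plan is to exploit the two-parameter scaling symmetry of the porous medium equation. Given any entropy solution $u$ on $[0,T] \times \R^d$ and free parameters $\alpha, \gamma \in \R$, I will first verify that
\[
u_\lambda(s, y) := \lambda^\alpha u(\lambda^\beta s, \lambda^\gamma y), \qquad \beta := \alpha(m-1) + 2\gamma,
\]
is an entropy solution to \eqref{pme_sys} on $[0, T/\lambda^\beta] \times \R^d$ with initial datum $u_{0,\lambda}(y) = \lambda^\alpha u_0(\lambda^\gamma y)$ and source $S_\lambda(s,y) = \lambda^{\alpha+\beta} S(\lambda^\beta s, \lambda^\gamma y)$; the balance exponent $\beta = \alpha(m-1) + 2\gamma$ is forced by matching $\partial_s u_\lambda$ with $\Delta_y u_\lambda^{[m]}$, since $(\lambda^\alpha u)^{[m]} = \lambda^{\alpha m}u^{[m]}$.

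Next, I would record how each ingredient of \eqref{scale_pme_lp_est} scales. Direct changes of variables give
\[
\|u_{0,\lambda}\|_{\LR{1}(\R^d)} + \|S_\lambda\|_{\LR{1}((0,T/\lambda^\beta)\times\R^d)} = \lambda^{\alpha - \gamma d}\bigl(\|u_0\|_{\LR{1}(\R^d)} + \|S\|_{\LR{1}((0,T)\times\R^d)}\bigr),
\]
while the homogeneity of the Sobolev-Slobodecki\u{\i} seminorm in each variable yields
\[
\|u_\lambda^{[\mu]}\|_{\DSR{\sigma_t}{p}(0,T/\lambda^\beta;\DSR{\sigma_x}{p}(\R^d))}^p = \lambda^{A}\,\|u^{[\mu]}\|^p_{\DSR{\sigma_t}{p}(0,T;\DSR{\sigma_x}{p}(\R^d))},
\]
with $A := \alpha \mu p + \gamma p \sigma_x - \gamma d + \beta p \sigma_t - \beta$.

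Plugging these into \eqref{scale_pme_lp_est} applied to $u_\lambda$ and fixing a non-trivial reference solution $u$ with finite right-hand side, the assumed estimate collapses to $\lambda^{A - (\alpha - \gamma d)} \lesssim 1$ uniformly in $\lambda > 0$, which forces $A = \alpha - \gamma d$. Substituting $\beta = \alpha(m-1) + 2\gamma$, this vanishing condition reads
\[
\alpha\bigl[\mu p + (m-1)p\sigma_t - m\bigr] + \gamma\bigl[p(\sigma_x + 2\sigma_t) - 2\bigr] = 0,
\]
which must hold for arbitrary $\alpha, \gamma \in \R$, so both brackets vanish. The first identity gives $p(\mu + (m-1)\sigma_t) = m$, hence the chain $p \le \frac{m}{\mu + (m-1)\sigma_t} \le \frac{m}{\mu}$ (equivalently $\sigma_t \le \frac{m - \mu p}{p(m-1)} \le \frac{m-\mu}{m-1}$, using $\sigma_t \ge 0$ and $p \ge 1$). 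Combined with the first, the second identity gives $\sigma_x = \frac{2}{p} - 2\sigma_t = \frac{2(\mu p - 1)}{p(m-1)} = \frac{2(\mu - \sigma_t)}{m} \le \frac{2\mu}{m}$. The `in particular' assertion follows by substituting $\sigma_t = \frac{m-\mu}{m-1}$ into the first identity, which forces $p = 1$ and then $\sigma_x = \frac{2(\mu-1)}{m-1}$.

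The main, and quite minor, obstacle is ensuring that a reference entropy solution exists with strictly positive left-hand side and finite right-hand side in \eqref{scale_pme_lp_est}, so that the homogeneity identity is not vacuous. This is straightforward: any entropy solution generated by a non-trivial $u_0 \in C^\infty_c(\R^d)$ with $S \equiv 0$ works, as does the Barenblatt profile whose scaling is revisited in the remainder of Section \ref{Scale}; with such a reference, the scaling argument runs for all $\lambda > 0$ and the conclusion is immediate.
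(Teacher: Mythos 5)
Your scaling ansatz and the homogeneity computation of the exponent $A$ are correct, and the part of the argument obtained by choosing $\gamma$ so that $\beta=0$ (pure spatial dilation) does yield the exact identity $p(\sigma_x+2\sigma_t)=2$ combined with the first bracket, i.e.\ the equality for $\sigma_x$ in \eqref{scaling_const}. However, there is a genuine gap in the claim that \emph{both} brackets must vanish. When $\beta\neq 0$, the rescaled function $u_\lambda$ is a solution on $[0,T/\lambda^\beta]\times\R^d$, not on $[0,T]\times\R^d$, whereas \eqref{scale_pme_lp_est} is a statement about solutions on $[0,T]$ with norms taken over $(0,T)$. To apply the hypothesis to $u_\lambda$ you must either restrict it to $(0,T)$ (when $T/\lambda^\beta>T$) or extend the data by zero (when $T/\lambda^\beta<T$), and in either case the seminorm of $u^{[\mu]}$ that appears after undoing the change of variables lives on $(0,\lambda^\beta T)$ rather than $(0,T)$. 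This comparison is one-sided: the inequality $\norm{u^{[\mu]}}_{\DSR{\sigma_t}{p}(0,T;\cdot)}\le\norm{u^{[\mu]}}_{\DSR{\sigma_t}{p}(0,\lambda^\beta T;\cdot)}$ is only available when $\lambda^\beta\ge1$, and in the opposite regime the left-hand side degenerates with the shrinking interval, so no contradiction arises. Hence the exponent condition can only be forced as an \emph{inequality} in the temporal direction, not as an equality; your step ``the assumed estimate collapses to $\lambda^{A-(\alpha-\gamma d)}\lesssim1$ uniformly in $\lambda>0$'' silently assumes a two-sided comparison that is not there.

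This is exactly why the lemma asserts an equality for $\sigma_x$ but only inequalities for $p$ and $\sigma_t$, and why the paper runs two separate scalings: a temporal one with $\eta,\gamma\ge1$ and only the limit $\eta\to\infty$, which yields $(m-1)(1-\sigma_t p)+1-\mu p\ge0$ (equivalently the first two lines of \eqref{scaling_const}), and a purely spatial one where both limits $\eta\to0$ and $\eta\to\infty$ are admissible, which yields the exact identity for $\sigma_x$. Your proposal, if it worked, would prove the strictly stronger equality $p\bigl(\mu+(m-1)\sigma_t\bigr)=m$, which is not what the lemma claims and is not derivable from \eqref{scale_pme_lp_est} by scaling. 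To repair the argument, keep your combined rescaling but (i) set $\beta=0$ to extract the spatial identity exactly, and (ii) for the temporal bracket, restrict to the regime where the time interval of the reference solution expands (extending $S$ by zero) and conclude only the one-sided inequality from the limit $\lambda^{\beta}\to\infty$ in the appropriate direction.
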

\begin{proof}
 For positive constants $\eta,\gamma \ge 1$ with $\eta^{m-1}=\gamma$ and a fixed triple $(\uf,\uf_0,S)$ such that $\uf$ satisfies \eqref{pme_sys} with initial condition $\uf_0$ and forcing $S$ we consider the rescaled quantities $(\ti\uf, \ti\uf_0, \ti S)$ defined via
\begin{align*}
 \ti\uf(t,x):= \eta \uf(\gamma t, x), \quad \ti \uf_0(x):= \eta \uf_0 (x), \quad \ti S(t,x):= \eta^m S(\gamma t, x),
\end{align*}
where we have tacitly extended $S$ on $(T,\gamma T)$ by $0$.
Then $\ti\uf$ satisfies \eqref{pme_sys} with $\ti \uf_0\in \LR{1}(\R^d)$ and $\ti S\in \LR{1}(0,T;\LR{1}(\R^d))$, so that \eqref{scale_pme_lp_est} gives
\begin{align}\label{rescaled_pme_lp_est_time}
 \norm{\ti\uf^{[\mu]}}_{\DSR{\sigma_t}{p}(0,T;\DSR{\sigma_x}{p}(\R^d))}^p\le c \vpp{\norm{\ti \uf_0}_{\LR{1}(\R^d)} + \norm{\ti S}_{\LR{1}(0,T;\LR{1}(\R^d))}}.
\end{align}
We observe
\begin{align*}
 \norm{\ti\uf^{[\mu]}}_{\DSR{\sigma_t}{p}(0,T;\DSR{\sigma_x}{p}(\R^d))}^p =\eta^{\mu p} \gamma^{\sigma_t p -1} \norm{\uf^{[\mu]}}_{\DSR{\sigma_t}{p}(0,\gamma T;\DSR{\sigma_x}{p}(\R^d))}^p
\end{align*}
as well as $\norm{\ti \uf_0}_{\LR{1}(\R^d)} = \eta \norm{\uf_0}_{\LR{1}(\R^d)}$ and $\norm{\ti S}_{\LR{1}(0,T;\LR{1}(\R^d))}= \eta  \norm{S}_{\LR{1}(0,\gamma T;\LR{1}(\R^d))}$. Thus, it follows from \eqref{rescaled_pme_lp_est_time} that
\begin{align}\label{scale_pme_lp_est_eta_time}
\begin{split}
 &\norm{\uf^{[\mu]}}_{\DSR{\sigma_t}{p}(0,T;\DSR{\sigma_x}{p}(\R^d))}^p \\
 &\qquad \le c \eta^{1- \mu p} \gamma^{1-\sigma_t p} \vpp{\norm{\uf_0}_{\LR{1}(\R^d)} + \norm{S}_{\LR{1}(0,\gamma T;\LR{1}(\R^d))}} \\
 &\qquad = c \eta^{(m-1)(1-\sigma_t p) + 1-\mu p} \vpp{\norm{\uf_0}_{\LR{1}(\R^d)} + \norm{S}_{\LR{1}(0,T;\LR{1}(\R^d))}}.
\end{split}
\end{align}
As long as $\uf_0$ or $S$ are non-trivial and unless
\begin{align}\label{scale_exp_cond_time}
 (m-1)(1-\sigma_t p) + 1-\mu p \geq 0,
\end{align}
this gives the contradiction $\uf=0$ by sending $\eta\to\infty$ (and consequently also $\gamma\to \infty$). Since $\sigma_t\ge 0$, \eqref{scale_exp_cond_time} gives $p\le \frac{m}{\mu+(m-1)\sigma_t}\le \frac{m}{\mu}$. On the same token, since $p\ge 1$, \eqref{scale_exp_cond_time} gives $\sigma_t \le \frac{m-\mu p}{p(m-1)} \le \frac{m-\mu}{m-1}$.

Next, we rescale in space. More precisely, for positive constants $\eta,\gamma>0$ with $\eta^{1-m}=\gamma^2$ and a fixed triple $(\uf,\uf_0,S)$ as above we consider the rescaled quantities $(\ti\uf, \ti\uf_0, \ti S)$ defined via
\begin{align*}
 \ti\uf(t,x):= \eta \uf(t, \gamma x), \quad \ti \uf_0(x):= \eta \uf_0 (\gamma x), \quad \ti S(t,x):= \eta S(t,\gamma x).
\end{align*}
Then $\ti\uf$ satisfies \eqref{pme_sys} with $\ti \uf_0\in \LR{1}(\R^d)$ and $\ti S\in \LR{1}(0,T;\LR{1}(\R^d))$, so that \eqref{scale_pme_lp_est} gives
\begin{align}\label{rescaled_pme_lp_est}
 \norm{\ti\uf^{[\mu]}}_{\DSR{\sigma_t}{p}(0,T;\DSR{\sigma_x}{p}(\R^d))}^p\le c \vpp{\norm{\ti \uf_0}_{\LR{1}(\R^d)} + \norm{\ti S}_{\LR{1}(0,T;\LR{1}(\R^d))}}.
\end{align}
We have
\begin{align*}
 \norm{\ti\uf^{[\mu]}}_{\DSR{\sigma_t}{p}(0,T;\DSR{\sigma_x}{p}(\R^d))}^p =\eta^{\mu p} \gamma^{\sigma_x p -d} \norm{\uf^{[\mu]}}_{\DSR{\sigma_t}{p}(0,T;\DSR{\sigma_x}{p}(\R^d))}^p
\end{align*}
as well as $\norm{\ti \uf_0}_{\LR{1}(\R^d)} = \eta \gamma^{-d} \norm{\uf_0}_{\LR{1}(\R^d)}$ and $\norm{\ti S}_{\LR{1}(0,T;\LR{1}(\R^d))}= \eta \gamma^{-d} \norm{S}_{\LR{1}(0,T;\LR{1}(\R^d))}$. Thus, it follows from \eqref{rescaled_pme_lp_est} and the relation $\eta^{1-m}=\gamma^2$ that
\begin{align}\label{scale_pme_lp_est_eta}
\begin{split}
 &\norm{\uf^{[\mu]}}_{\DSR{\sigma_t}{p}(0,T;\DSR{\sigma_x}{p}(\R^d))}^p \\
 &\qquad \le c \eta^{1- \mu p} \gamma^{-\sigma_x p} \vpp{\norm{\uf_0}_{\LR{1}(\R^d)} + \norm{S}_{\LR{1}(0,T;\LR{1}(\R^d))}} \\
 &\qquad = c \eta^{\frac{\sigma_x p(m-1)}{2} + 1-\mu p} \vpp{\norm{\uf_0}_{\LR{1}(\R^d)} + \norm{S}_{\LR{1}(0,T;\LR{1}(\R^d))}}.
\end{split}
\end{align}
As long as $\uf_0$ or $S$ are non-trivial and unless

\begin{align}\label{scale_exp_cond}
 \frac{\sigma_x p (m-1)}{2} + 1- \mu p = 0 \Leftrightarrow \sigma_x = \frac{\mu p-1}{p} \frac{2}{m-1},
\end{align}
this gives the contradiction $\uf=0$ by sending $\eta\to 0$ or $\eta\to\infty$ (and consequently $\gamma\to \infty$ or $\gamma\to 0$, respectively). Plugging into \eqref{scale_exp_cond} the restrictions on $p$ and $\sigma_t$, we obtain the result.
\end{proof}

\begin{rem}
 If one sets $\mu=1$, $p=1$ and $\sigma_t=0$, Lemma \ref{lem:scaling} tells us that $\sigma_x$ cannot be positive, which is what we claimed following \eqref{scale_lin_lp_est}. Moreover, we emphasize that Lemma \ref{lem:scaling} shows that in the case of the whole space, the regularity exponent $\sigma_x\in [\frac{2(\mu-1)}{m-1},\frac{2\mu}{m}]$ is in a one-to-one correspondence to the integrability exponent $p\in[1,\frac{m}{\mu}]$ via
 \begin{align*}
  \sigma_x= \frac{\mu p -1}{p}\frac{2}{m-1}, \quad \tand \quad p= \frac{2}{2\mu-\sigma_x(m-1)}.
 \end{align*}
\end{rem}

\subsection{The Barenblatt Solution}\label{Barenblatt}

Consider the Barenblatt solution
\begin{align*}
 \ubb(t,x):=t^{-\alpha}(C-k|xt^{-\beta}|^2)_+^{\frac{1}{m-1}},
\end{align*}
where $m>1$, $\alpha:=\frac{d}{d(m-1)+2}$, $k=\frac{\alpha(m-1)}{2md}$, $\beta=\frac{\alpha}{d}$, and $C>0$ is a free constant. Then, for $\mu\in [1,m]$,
%
 $\displaystyle \ubb^{[\mu]}\in L^{\frac{m}{\mu}}(0,T;\DSR{s}{\frac{m}{\mu}}(\R^d))$ 
%
implies $s<\frac{2\mu}{m}$.

\begin{proof}
 With $F(x):=(C-k|x|^2)_+^{\frac{\mu}{m-1}}$ we have $\ubb^{[\mu]}(t,x)=t^{-\alpha\mu}F(xt^{-\beta})$. We next observe that, for $s\in (0,1)$ and each $t\ge 0$,
 \begin{align*}
  \norm{\ubb^{[\mu]}(t,\cdot)}_{\DSR{s}{\frac{m}{\mu}}(\R^d)}^{\frac{m}{\mu}} & =\int_{\R^d\times\R^d} \frac{|\ubb^{[\mu]}(t,x)-\ubb^{[\mu]}(t,y)|^{\frac{m}{\mu}}}{|x-y|^{\frac{sm}{\mu}+d}}\dd x \dd y \\
  &=t^{-\alpha m - \beta(\frac{sm}{\mu}+d) + 2d\beta} \norm{F}_{\DSR{s}{\frac{m}{\mu}}(\R^d)}^{\frac{m}{\mu}}.
 \end{align*}
 Hence,
 \begin{align*}
  \norm{\ubb^{[\mu]}}_{L^{\frac{m}{\mu}}(0,T;\DSR{s}{\frac{\mu}{m}}(\R^d))}^{\frac{m}{\mu}} =\norm{t^{-\alpha m - \beta(\frac{sm}{\mu}+d) + 2d\beta}}_{L^1(0,T)} \norm{F}_{\DSR{s}{\frac{m}{\mu}}(\R^d)}^{\frac{m}{\mu}},
 \end{align*}
 which is finite if and only if
 \begin{align*}
  -\alpha m - \beta(\frac{sm}{\mu}+d) + 2d\beta > -1 \quad \tand \quad F\in \DSR{s}{\frac{m}{\mu}}(\R^d).
 \end{align*}
 Hence, necessarily
 \begin{align*}
  m + \frac1d(\frac{sm}{\mu}+d) - 2 < \frac{1}{\alpha} = \frac{d(m-1)+2}{d},
 \end{align*}
 which is equivalent to $s<\frac{2\mu}{m}$. In the case $s\in (1,2)$ we observe that it holds $\partial_{x_i}\ubb^{[\mu]}(t,x)=t^{-\alpha\mu+\beta}\partial_{x_i}F(xt^{-\beta})$, so that analogous arguments may be applied.
\end{proof}

\section{Averaging Lemma Approach}\label{AvLem}

In \cite{Ges17}, an Averaging Lemma has been introduced that can be applied directly to the porous medium equations \eqref{pme_sys} to obtain estimates on the spatial regularity of $\uf$, but so far, no corresponding estimates for powers of the solution $\uf^{\mu}$ or its time regularity could be obtained. In this section, we provide an Averaging Lemma that gives a comprehensive answer to both of these questions. To this end, we recall the definition of the anisotropic and isotropic truncation properties from \cite{Ges17}, which extend the truncation property introduced in \cite[Definition 2.1]{TaT07}.

\begin{defn}\label{def:tuncation_prop}\mbox{ }
\begin{enumerate}
\item Let $m$ be a complex-valued Fourier multiplier. We say that $m$ has the truncation property if, for any locally supported bump function $\psi$ on $\C$ and any $1\le p<\infty$, the multiplier with symbol $\psi(\frac{m(\xi)}{\d})$ is an $L^{p}$-multiplier as well as an $\calm_{TV}$-multiplier uniformly in $\delta>0$, that is, its $L^{p}$-multiplier norm ($\calm_{TV}$-multiplier norm resp.) depends only on the support and $C^{l}$ size of $\psi$ (for some large $l$ that may depend on $m$) but otherwise is independent of $\delta$.
\item Let $m:\R_{\xi}^{d}\times\R_{v}\to\C$ be a Carath\'{e}odory function such that $m(\cdot,v)$ is radial for all $v\in\R$. Then $m$ is said to satisfy the isotropic truncation property if for every bump function $\psi$ supported on a ball in $\C$, every bump function $\vp$ supported in $\{\xi\in\C:\,\frac12\le|\xi|\le2\}$ and every $1<p<\infty$ 
\[
M_{\psi,J}f(x,v):=\mcF_{x}^{-1}\vp\left(\frac{|\xi|^{2}}{J^{2}}\right)\psi\left(\frac{m(\xi,v)}{\d}\right)\mcF_{x}f(x)
\]
is an $L_{x}^{p}$-multiplier for all $v\in\R$, $J=2^{j},\,j\in\Z$ and, for all $r\ge1$,
\begin{align*}
\Big\|\|M_{\psi,J}\|_{\calm^{p}}\Big\|_{L_{v}^{r}} & \lesssim|\Omega_{m}(J,\d)|^{\frac{1}{r}},
\end{align*}
where
$\displaystyle 
\Omega_{m}(J,\d):=\{v\in\R:\,|\frac{m(J,v)}{\d}|\in\supp\psi\}$.
Here we use an abuse of notation  $|\frac{m(J,v)}{\d}|:=\sup\setc{|\frac{m(\xi,v)}{\d}|}{\frac{|\xi|^2}{J^2}\in\supp \vp}$.
\end{enumerate}
\end{defn}

We recall that for $m(\xi,v):=|\xi|^2 |v|$, the anisotropic truncation property is satisfied uniformly in $v$ by Example A.2 and the isotropic truncation property is satisfied by Example 3.2 in \cite{Ges17}, albeit only in the case $J\ge 1$. However, the proof given there can be used without any changes to obtain the full assertion for general $J\in\Z$.

\begin{lem}\label{lem:av}
 Assume $m\in \vpp{1,\infty}$, $\gamma\in\vpp{-\infty,m}$, $\mu\in [1,m+1-\gamma)$ and let $f\in \LR{\rhon}_{t,x,v}$, where $\rhon'=\frac{1}{\rho}$ with $\rho\in(0,1)$, be a solution to
 \begin{align}\label{av_eqn}
  \diffop(\partial_t,\nabla_x,\vf)f(t,x,\vf)=\g_0(t,x,\vf)+\partial_\vf \g_1(t,x,\vf)  \  \ton \ \R_t \times \R^d_x \times \R_\vf.
 \end{align}
 Here, the differential operator $\diffop(\partial_t,\nabla_x,\vf)$ that is given in terms of its symbol
 \begin{align}\label{av_op}
  \Fdiffop &:= i\tau
  + |v|^{m-1}|\xi|^2,
 \end{align}
 and $g_i$ are Radon measures satisfying
 \begin{align*}
|g_{0}|(t,x,v)|v|^{1-\gamma}+|g_{1}|(t,x,v)|v|^{-\gamma}\in
\calm_{TV}(\R_{t}\times\R_{x}^{d}\times\R_{v}).
 \end{align*}
 Suppose $s\in (\frac{\mu-2+\gamma}{m-1},1]\cap [0,1]$. Then $\overline{f}\in S^{\overline{\kappa}}_{p,\infty,(\infty)}\dot B$, where\newline $\overline{f}(t,x):=\int f(t,x,\vf) |\vf|^{\mu-1} \dd \vf$, $\overline\kappa:=(\kappa_t,\kappa_x)$ and
 \begin{align}\label{lem:av_constants}
 \begin{split}
  &p:=\frac{s(m-1)+1-\gamma+\rho}{\rho\mu+(1-\rho)(s(m-1)+1-\gamma)}, \\
  \kappa_t:= &\frac{(1-s)(\mu-1+\rho)}{s(m-1)+1-\gamma+\rho},  
  \qquad \kappa_x:= \frac{2s(\mu-1+\rho)}{s(m-1)+1-\gamma+\rho}.
  \end{split}
 \end{align}
 Moreover, we have the estimate
 \begin{align}\label{lem:av_est1}
  &\norm{\overline{f}}_{S^{\overline{\kappa}}_{p,\infty,(\infty)}\dot B}\lesssim \||v|^{1-\gamma}g_{0}\|_{\calm_{TV}}+\||v|^{-\gamma}g_{1}\|_{\calm_{TV}} + \|f\|_{\LR{\rhon}_{t,x,v}}.
 \end{align}
 If additionally $\overline{f}\in \LR{r}_{t,x}$, $p\ne r\in[1,\infty]$, then for all $q\in\vpp{\min\{p,r\},\max\{p,r\}}$ it holds  $\overline{f}\in S^{\vt\overline{\kappa}}_{q,\infty}\dot B$, where $\vartheta\in\vpp{0,1}$ is such that
 \begin{align*}
  \frac{1}{q}=\frac{1-\vartheta}{r}+\frac{\vartheta}{p}.
 \end{align*}

 In this case we have
 \begin{align}\label{lem:av_est2}
  &\norm{\overline{f}}_{S^{\vt\overline{\kappa}}_{q,\infty}\dot B}\lesssim \||v|^{1-\gamma}g_{0}\|_{\calm_{TV}}+\||v|^{-\gamma}g_{1}\|_{\calm_{TV}} + \|f\|_{\LR{\rhon}_{t,x,v}} + \|\overline{f}\|_{\LR{r}_{t,x}}.
 \end{align}
 Finally, if $s=1$ and consequently $\kappa_t=0$, then \eqref{lem:av_est2} remains true if we replace the space $S^{\vt\overline{\kappa}}_{q,\infty}\dot B=S^{(0,\vt\kappa_x)}_{q,\infty}\dot B$ by $\td{L}^{q}_{t}\dot B^{\vt\kappa_x}_{q,\infty}$.
 \end{lem}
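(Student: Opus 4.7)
The plan is to carry out the Fourier-analytic averaging argument of Tadmor--Tao \cite{TaT07} and Gess \cite{Ges17} in the fully anisotropic Littlewood--Paley setting. For dyadic pairs $(L,J)=(2^l,2^j)$ with $l,j\in\Z$, localize $f_{l,j}:=\mcF^{-1}_{t,x}\eta_l\vp_j\mcF_{t,x}f$. For a threshold $\d=\d(L,J)>0$ still to be chosen, I decompose $f_{l,j}=f_{l,j}^{\mathrm{res}}+f_{l,j}^{\mathrm{nr}}$ via the symbolic cutoff $\psi(\Fdiffop/\d)$, with $\psi$ a smooth bump near the origin equal to $1$ on a smaller ball. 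The resonant part contributes only where $|\Fdiffop|\lesssim\d$; at $(\tau,\xi)\sim(L,J)$ this forces both $L\lesssim\d$ and $|v|\lesssim V:=(\d/J^2)^{1/(m-1)}$, so the cutoff is velocity-supported on an interval of length $\lesssim V$. The complementary non-resonant part can be inverted against $\Fdiffop^{-1}$.

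For the resonant contribution to $\bar f_{l,j}(t,x)=\int f_{l,j}^{\mathrm{res}}(t,x,v)|v|^{\mu-1}\dd v$, I apply the isotropic truncation property (Definition~\ref{def:tuncation_prop}(ii)) to obtain a uniform $L^p_x$-multiplier bound with velocity support in $\Omega_\diffop(J,\d)$ of measure $\lesssim V$. H\"older in $v$ with the conjugate pair $(\rhon,\rhon')=(\rhon,1/\rho)$ then produces the factor $\|\,|v|^{\mu-1}\mathbf{1}_{|v|\lesssim V}\|_{L^{\rhon'}_v}\lesssim V^{\mu-1+\rho}$ (the exponent $(\mu-1)\rhon'+1>0$ since $\mu\ge 1$) times $\|f\|_{L^\rhon_{t,x,v}}$. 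For the non-resonant contribution, the anisotropic truncation property (Definition~\ref{def:tuncation_prop}(i)) identifies $\Fdiffop^{-1}(1-\psi)(\Fdiffop/\d)$ as an $\calm_{TV}$- and $L^p$-multiplier of norm $\lesssim\d^{-1}$ uniformly in $v$. Integration by parts transfers $\partial_v$ from $g_1$ onto the symbol, producing an additional factor $\lesssim V^{-1}\d^{-1}$ on the localization $|v|\sim V$. Combined with the weights $|v|^{1-\gamma}$ on $g_0$ and $|v|^{-\gamma}$ on $g_1$ and the support restriction $|v|\lesssim V$, this delivers a bound of order $\d^{-1}V^{\mu+\rho-1-\gamma}$ times the corresponding weighted $\calm_{TV}$ norms.

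Balancing the two contributions, $V^{\mu-1+\rho}\sim\d^{-1}V^{\mu-1-\gamma+\rho}$, fixes $\d$ as a specific power of $J$, and substituting back yields the dyadic decay $2^{-\kappa_t l-\kappa_x j}$ with $(\kappa_t,\kappa_x,p)$ as in \eqref{lem:av_constants}. The weak-$L^{p,\infty}$ appears at the endpoint of the H\"older step, so taking $\sup_{l,j}$ produces \eqref{lem:av_est1}. For \eqref{lem:av_est2}, I would real-interpolate this weak-type dyadic bound against the assumed $L^r_{t,x}$ control on $\bar f$ with parameter $\vt$, which converts weak-$L^p$ to strong-$L^q$ while scaling the regularity indices by $\vt$. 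The boundary case $s=1$ forces $\kappa_t=0$, so the time localization becomes superfluous: performing the splitting only in the spatial Littlewood--Paley decomposition yields the stated Chemin--Lerner variant $\td L^q_t\dot B^{\vt\kappa_x}_{q,\infty}$. The main obstacle is the exponent bookkeeping: tracking precisely the powers of $V=(\d/J^2)^{1/(m-1)}$ generated by H\"older, by the $\partial_v$-transfer, and by the three weights, so that the dyadic gains assemble into the anisotropic Besov exponents of \eqref{lem:av_constants}. The admissibility condition $s>(\mu+\gamma-2)/(m-1)$ is exactly the requirement that the relevant velocity-integral exponents remain positive, ensuring that the H\"older step converges. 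A secondary technical check is that the cutoffs $\eta_l\vp_j\psi(\Fdiffop/\d)$ and their non-resonant complements fall within the scope of the truncation properties uniformly in $(l,j,v)$, which follows from Examples~A.2 and~3.2 of \cite{Ges17}.
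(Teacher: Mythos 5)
Your overall architecture (dyadic localization in $(\tau,\xi)$, a resonant/non-resonant splitting governed by a threshold $\delta$, H\"older in $v$ on the resonant piece, inversion of the symbol on the non-resonant piece, and real interpolation at the end) is the same as the paper's. However, there is a genuine gap in the non-resonant step. You bound $|\Fdiffop|^{-1}\lesssim\delta^{-1}$ on the non-resonant set and balance this against the resonant gain; this produces no decay whatsoever in the time frequency $2^{l}$, so your balancing determines $\delta$ as a power of $J=2^{j}$ only and can never output $\kappa_t>0$. The device that generates the anisotropic pair $(\kappa_t,\kappa_x)$ — and which is entirely absent from your write-up — is the $s$-dependent lower bound $|\tau|^{1-s}\,|v|^{s(m-1)}|\xi|^{2s}\le|\mcL(i\tau,i\xi,v)|$ for $s\in[0,1]$. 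Combined with the lower bound on $|v|$ on the non-resonant support, this yields for the relevant symbol $\frac{|v|^{\mu-2+\gamma}}{|\mcL|}$ a bound of the form $2^{-l(1-s)}$ times an explicit power of $2^{j}$ and $\delta$; it is exactly here (and not, as you suggest, in the convergence of the H\"older step, which only needs $\mu\ge1$) that the hypothesis $s>\frac{\mu-2+\gamma}{m-1}$ is used, to make the exponent of the velocity lower bound the right sign. Without this step your exponents come out wrong even in the purely spatial case $s=1$ (your balance gives a $j$-decay with denominator $\gamma+m-1$ instead of $m-\gamma+\rho$), and the mixed case is out of reach.

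Two further points. First, your resonant cutoff $\psi(\Fdiffop/\delta)$ depends on $\tau$, so it is not covered by the isotropic truncation property of Definition \ref{def:tuncation_prop}(ii), which is a purely spatial multiplier statement for symbols $m(\xi,v)$; the paper instead cuts along $\psi_{0}\bigl(\frac{|v||\xi|^{2}}{\delta}\bigr)$, for which the truncation properties are verified in \cite{Ges17}, and the resulting velocity support is $|v|\lesssim 2^{-2j}\delta$ rather than your $(\delta/2^{2j})^{1/(m-1)}$. Second, the resonant bound is an $\LR{\rhon}_{t,x}$ bound while the non-resonant one is an $\LR{1}_{t,x}$ bound; they cannot be "balanced" directly. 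The correct mechanism is to insert both into the $K$-functional $K(z,\overline{f}_{l,j})$ for the couple $(\LR{1}_{t,x},\LR{\rhon}_{t,x})$ and choose $\delta=\delta(z,l,j)$ to equilibrate, which is where the weak space $\LR{p,\infty}$ and the value of $p$ in \eqref{lem:av_constants} come from. Your treatment of the $\partial_v g_1$ term also omits the contribution where $\partial_v$ falls on $\mcL^{-1}$, which requires the additional observation $|v|^{m-1}|\xi|^{2}/|\mcL|\lesssim1$, and the reduction to $v$-compactly supported $f$ needs a justification (the paper uses a cutoff $\psi_n$ and the finiteness of $\||v|^{-\gamma}g_1\|_{\calm_{TV}}$ to kill the commutator term).
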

 \begin{rem}\label{rem:av}
 Observe that for $\rho\in(\frac{m+1-\gamma-\mu}{m+1-\gamma},1)$ one may prescribe a specific integrability exponent. More precisely, given \[\td p\in [\frac{1-\gamma+\rho}{\rho\mu+(1-\rho)(1-\gamma)},\frac{m+1-\gamma}{\mu}]\cap (1,\frac{m+1-\gamma}{\mu}]\] choose 
 \begin{align*}
  s:=\frac{\mu \td p \rho + \td p(1-\rho)(1-\gamma)-1+\gamma+\rho}{(m-1)(1-\td p(1-\rho))}\in (\frac{\mu-2+\gamma}{m-1},1]\cap [0,1].
 \end{align*}
 Then \eqref{lem:av_constants} reads $p=\td p$ as well as
 \begin{align*}
  \kappa_t&:= \frac{m+\rho-\gamma-\mu p \rho + p(1-\rho)(\gamma-m)}{p \rho}\frac{1}{m-1}, \\
  \kappa_x&:= \frac{\mu p \rho +p(1-\rho)(1-\gamma)-1+\gamma-\rho}{p \rho}\frac{2}{m-1}.
 \end{align*}
 Observe that in the limiting case $\rho\to 1$ and $\gamma\to 1$, these orders of differentiability correspond to the ones found in \eqref{scaling_const}.
\end{rem}

\begin{proof}[Proof of Lemma \ref{lem:av}]
We first assume that $f$ is compactly supported with respect to the variable $v$. This condition will enter only qualitatively, and never appears in quantitative form. Therefore, at the end of the proof, we can again remove this additional assumption. 

Since we are interested in regularity in terms of homogeneous Besov spaces, we decompose $f$ into Littlewood-Paley blocks with respect to the $t$-variable and the $x$-variable. Let $\{\eta_l\}_{l\in\Z}$ be a partition of unity on $\R\setminus\set{0}$ and $\{\vp_j\}_{j\in\Z}$ a partition of unity on $\R^d\setminus\set{0}$ as in Section \ref{FctSpc}. Then we define for $l,j\in\Z$
\begin{align*}
 f_{l,j}:=\calf^{-1}_{t,x}[\eta_l\vp_j\calf_{t,x}f],
\end{align*}
where $\calf_{t,x}{f}_{l,j}(\tau,\xi,v)$ is supported on frequencies $|\xi|\sim2^{j}$, $|\tau|\sim2^{l}$ for $l,j\in\Z$.
Similarly, we define the decompositions $g_{0,l,j}$ and $g_{1,l,j}$ of $g_0$ and $g_1$, respectively.
We consider a micro-local decomposition of $f_{l,j}$ connected to the degeneracy of the operator $\mcL(\partial_{t},\nabla_{x},v)$. Let $\psi_{0}\in\CRci(\R)$ be a smooth function supported in $B_{2}(0)$ and set $\psi_{1}:=1-\psi_0$.
For $\d>0$ to be specified later we write 
\begin{align*}
f_{l,j} & =\mcF_{x}^{-1}\psi_{0}\left(\frac{|\vf||\xi|^2}{\d}\right)\mcF_{x} f_{l,j}+\mcF_{x}^{-1}\psi_{1}\left(\frac{|\vf||\xi|^2}{\d}\right)\mcF_{x}f_{l,j} =:f_{l,j}^{0}+f_{l,j}^{1}.
\end{align*}
Since $f$ is a solution to \eqref{av_eqn}, we have
\begin{equation}\label{eq:eqn_fK-3}
\begin{split}
\mcF_{t,x}^{-1}&\mcL(i\tau,i\xi,v)\mcF_{t,x}f_{l,j}^{1}(t,x,v)\\
&=\mcF_{x}^{-1}\psi_{1}\left(\frac{|\vf||\xi|^2}{\d}\right)\mcF_{x}\left(g_{0,l,j}(t,x,v)+\partial_{v}g_{1,l,j}(t,x,v)\right)
\end{split}
\end{equation}
and thus
\begin{align}
f_{l,j}^{1}(t,x,v)= &  \mcF_{t,x}^{-1} \psi_{1}\left(\frac{|\vf||\xi|^2}{\d}\right)\frac{1}{\mcL(i\tau,i\xi,v)}\mcF_{t,x}g_{0,l,j}(t,x,v)\label{eq:eqn_fK-1-1}\\
 & + \mcF_{t,x}^{-1} \psi_{1}\left(\frac{|\vf||\xi|^2}{\d}\right)\frac{1}{\mcL(i\tau,i\xi,v)}\mcF_{t,x}\partial_{v}g_{1,l,j}(t,x,v)\nonumber \\
=: & f_{l,j}^{2}(t,x,v)+f_{l,j}^{3}(t,x,v).\nonumber 
\end{align}

In conclusion, we have arrived at the decomposition
\[
\begin{split}
\bar{f}_{l,j}&:=\int f_{l,j}|v|^{\mu-1}\,dv\\
& =\int f_{l,j}^{0}|v|^{\mu-1}\dd v+\int f_{l,j}^{2}|v|^{\mu-1}\dd v+\int f_{l,j}^{3}|v|^{\mu-1}\dd v=:\bar{f}_{l,j}^{0}+\bar{f}_{l,j}^{2}+\bar{f}_{l,j}^{3}.
\end{split}
\]
We aim to estimate the regularity of of these three contributions separately.

\smallskip\noindent
\textit{Step 1:} $f^0$. 
We note that we have the estimate $\|\calf^{-1}_t \eta_l \calf f\|_{\LR{\rhon}_{t,x}}\lesssim \|f\|_{\LR{\rhon}_{t,x}}$ with a constant independent of $l$, since $\|\eta_l\|_{\calm^\rhon}=\|\eta_0\|_{\calm^\rhon}<\infty$. Let $l,j\in\Z$ be arbitrary, fixed. Then, we have that $|v|\le 2\cdot 2^{-2j}\d$ on the support of $\vp(2^{-j}\xi)\psi_{0}\left(\frac{|\vf||\xi|^2}{\d}\right)$, so that $|\Omega_m(2^j,\delta)|\lesssim |[-2\cdot 2^{-2j}\d,2\cdot 2^{-2j}\d]|\lesssim 2^{-2j}\d$. Hence, by the isotropic truncation property and Minkowski's and H\"older's inequality it holds
\begin{align*}
& \|\int f_{l,j}^{0}|v|^{\mu-1}\dd v\|_{L_{t,x}^{\rhon}} \\
& \qquad = \|\int \mcF_{x}^{-1}\psi_{0}\left(\frac{|\vf||\xi|^2}{\d}\right) |v|^{\mu-1} \mcF_{x}f_{l,j}\dd v\|_{L_{t,x}^{\rhon}} \\
 & \qquad \lesssim \int \|\mcF_{x}^{-1}\psi_{0}\left(\frac{|\vf||\xi|^2}{\d}\right) |v|^{\mu-1} \mcF_{x}f_{l,j}\|_{L_{t,x}^{\rhon}}\dd v \\
 &\qquad \lesssim \left(\frac{\d}{2^{2j}}\right)^{\mu-1} \int\|\mcF_{x}^{-1}\psi_{0}\left(\frac{|\vf||\xi|^2}{\d}\right) \mcF_{x}f_{l,j}\|_{L_{t,x}^{\rhon}}\dd v \\
 &\qquad \lesssim \left(\frac{\d}{2^{2j}}\right)^{\mu-1} \int \|M_{\psi_0,2^{-j}}\|_{\calm^\rhon}\|f\|_{L_{t,x}^{\rhon}}\dd v \\
  &\qquad \le \left(\frac{\d}{2^{2j}}\right)^{\mu-1} \left\| \|M_{\psi_0,2^{-j}}\|_{\calm^\rhon}\right\|_{\LR{\rhon'}_{v}}\|f\|_{L_{t,x,v}^{\rhon}} \\
 &\qquad \lesssim \left(\frac{\d}{2^{2j}}\right)^{\mu-1} |\Omega_m(2^j,\delta)|^{\frac{1}{\rhon'}}\|f\|_{L_{t,x,v}^{\rhon}}
   \lesssim \left(\frac{\d}{2^{2j}}\right)^{\mu-1+\rho}\|f\|_{L_{t,x,v}^{\rhon}},
\end{align*}
where we have used $\beta'=\frac{1}{\rho}$.

\smallskip\noindent
\textit{Step 2:} $f^2$. 
Let $l,j\in\Z$ be arbitrary, fixed. Since $s\in [0,1]$, we clearly have
\begin{align*}
 |\tau|^{1-s}|v|^{s(m-1)}|\xi|^{2s}\le |\mcL(i\tau,i\xi,v)|.
\end{align*}
Moreover, in virtue of $s>\frac{\mu-2+\gamma}{m-1}$ we have on the support of $\eta_l\vp_j\psi_{1}\left(\frac{|\vf||\xi|^2}{\d}\right)$ (so that $|\tau|\sim 2^{l}$, $|\xi|\sim 2^{j}$, and $|v|\gtrsim 2^{-2j}\d$)
\begin{align*}
 \frac{|\vf|^{\mu-2+\gamma}}{|\mcL(i\tau,i\xi,v)|}&\lesssim \frac{|\vf|^{\mu-2+\gamma}}{|\tau|^{1-s}|v|^{s(m-1)}|\xi|^{2s}}\\
 & \lesssim \frac{\left(2^{-2j}\d\right)^{\mu-2+\gamma-s(m-1)}}{2^{l(1-s)}2^{2js}} =\frac{2^{2j(s(m-2)-\mu+2-\gamma)}}{\d^{s(m-1)-\mu+2-\gamma}2^{l(1-s)}}.
\end{align*}
Hence, by Theorem \ref{thm:FM} and Lemma \ref{lem:FM}, $\frac{|\vf|^{\mu-2+\gamma}}{\mcL(i\tau,i\xi,v)}$ acts on the support of $\eta_l\vp_j\psi_{1}\left(\frac{|\vf||\xi|^2}{\d}\right)$ as a constant multiplier of order $\frac{2^{2j(s(m-2)-\mu+2-\gamma)}}{\d^{s(m-1)-\mu+2-\gamma}2^{l(1-s)}}$. Consequently, by the anisotropic truncation property
\begin{align*}
& \|\int f_{l,j}^{2}|v|^{\mu-1}\dd v\|_{L_{t,x}^{1}}\\
& \qquad =  \|\int\mcF_{t,x}^{-1} \psi_{1}\left(\frac{|\vf||\xi|^2}{\d}\right)\frac{|\vf|^{\mu-2+\gamma}}{\mcL(i\tau,i\xi,v)}\mcF_{t,x}|v|^{1-\gamma}g_{0,l,j}\dd v\|_{L_{t,x}^{1}}\\
& \qquad \lesssim  \frac{2^{2j(s(m-2)-\mu+2-\gamma)}}{\d^{s(m-1)-\mu+2-\gamma}2^{l(1-s)}}\||v|^{1-\gamma}g_{0}\|_{\calm_{TV}}.
\end{align*}
Here, we have used that with $\psi_0\left(\frac{|\vf||\xi|^2}{\d}\right)$ also $\psi_1\left(\frac{|\vf||\xi|^2}{\d}\right)=1-\psi_0\left(\frac{|\vf||\xi|^2}{\d}\right)$ is a bounded $\calm_{TV}$-multiplier independent of $\d>0$.

\smallskip\noindent
\textit{Step 3:} $f^{3}$. 
Let $l,j\in\Z$ arbitrary, fixed.
We observe (recall $\mcL(i\tau,i\xi,v) = i\tau + |v|^{m-1}|\xi|^2$)
\begin{align*}
\int f_{l,j}^{3}&|v|^{\mu-1}\dd v \\
 &  =-\int\mcF_{t,x}^{-1} \psi_{1}'\left(\frac{|\vf||\xi|^2}{\d}\right)\frac{\sgn(v)|\xi|^2}{\d}\frac{|\vf|^{\mu-1}}{\mcL(i\tau,i\xi,v)}\mcF_{t,x}g_{1,l,j}\dd v \\
& -(\mu-1)\int\mcF_{t,x}^{-1} \psi_{1}\left(\frac{|\vf||\xi|^2}{\d}\right)\frac{\sgn(v)|\vf|^{\mu-2}}{\mcL(i\tau,i\xi,v)}\mcF_{t,x}g_{1,l,j}\dd v \\
& +\int\mcF_{t,x}^{-1} \psi_{1}\left(\frac{|\vf||\xi|^2}{\d}\right)\frac{|\vf|^{\mu-1} \partial_v\mcL(i\tau,i\xi,v)}{\mcL(i\tau,i\xi,v)^2}\mcF_{t,x}g_{1,l,j}\dd v \\
&=-\int\mcF_{t,x}^{-1} \psi_{1}'\left(\frac{|\vf||\xi|^2}{\d}\right)\frac{|\vf||\xi|^2}{\d}\frac{\sgn(v)|\vf|^{\mu-2+\gamma}}{\mcL(i\tau,i\xi,v)}\mcF_{t,x}|\vf|^{-\gamma}g_{1,l,j}\dd v \\
& -(\mu-1)\int\mcF_{t,x}^{-1} \psi_{1}\left(\frac{|\vf||\xi|^2}{\d}\right)\frac{\sgn(v)|\vf|^{\mu-2+\gamma}}{\mcL(i\tau,i\xi,v)}\mcF_{t,x}|\vf|^{-\gamma}g_{1,l,j}\dd v \\
& +(m-1)\int\mcF_{t,x}^{-1} \psi_{1}\left(\frac{|\vf||\xi|^2}{\d}\right)\frac{|\vf|^{\mu +m-3+\gamma}|\xi|^2}{\mcL(i\tau,i\xi,v)^2}\mcF_{t,x}|\vf|^{-\gamma}g_{1,l,j}\dd v.
\end{align*}
Observe that $\psi_1'$ is supported on an annulus. Therefore, we have as before $|\tau|\sim 2^{l}$, $|\xi|\sim 2^{j}$ and $|v|\gtrsim 2^{-2j}\d$ on the support of $\eta_l\vp_j\psi_{1}\left(\frac{|\vf||\xi|^2}{\d}\right)$, and additionally also $|v|\sim 2^{-2j}\d$ on the support of $\eta_l\vp_j\psi_{1}'\left(\frac{|\vf||\xi|^2}{\d}\right)$. This last observation allows us to estimate the expression $\frac{|\vf||\xi|^2}{\d}$ appearing in the first integral on the right hand side by
\begin{align*}
 \frac{|\vf||\xi|^2}{\d}\lesssim 1.
\end{align*}
As in Step 2, we obtain
\begin{align*}
 \frac{|\vf|^{\mu-2+\gamma}}{|\mcL(i\tau,i\xi,v)|}&\lesssim \frac{2^{2j(s(m-2)-\mu+2-\gamma)}}{\d^{s(m-1)-\mu+2-\gamma}2^{l(1-s)}},
\end{align*}
and, similarly,
\begin{align*}
 \frac{|\vf|^{\mu+m-3+\gamma}|\xi|^2}{|\mcL(i\tau,i\xi,v)|^2}&=\frac{|\vf|^{\mu-2+\gamma}}{|\mcL(i\tau,i\xi,v)|}\frac{|\vf|^{m-1}|\xi|^2}{|\mcL(i\tau,i\xi,v)|}\\
  & \lesssim \frac{|\vf|^{\mu-2+\gamma}}{|\mcL(i\tau,i\xi,v)|}\lesssim \frac{2^{2j(s(m-2)-\mu+2-\gamma)}}{\d^{s(m-1)-\mu+2-\gamma}2^{l(1-s)}}.
\end{align*}
In virtue of these estimates, the expressions
\begin{align*}
 \frac{|\vf||\xi|^2}{\d}\frac{\sgn(v)|\vf|^{\mu-2+\gamma}}{\mcL(i\tau,i\xi,v)}, \quad \frac{\sgn(v)|\vf|^{\mu-2+\gamma}}{\mcL(i\tau,i\xi,v)}, \quad \frac{|\vf|^{\mu +m-3+\gamma}|\xi|^2}{\mcL(i\tau,i\xi,v)^2}
\end{align*}
extend by Theorem \ref{thm:FM} and Lemma \ref{lem:FM} to constant multipliers of order $\frac{2^{2j(s(m-2)-\mu+2-\gamma)}}{\d^{s(m-1)-\mu+2-\gamma}2^{l(1-s)}}$ on supports of $\eta_l\vp_j\psi_{1}'\left(\frac{|\vf||\xi|^2}{\d}\right)$ and $\eta_l\vp_j\psi_{1}\left(\frac{|\vf||\xi|^2}{\d}\right)$, respectively. Hence, by the anisotropic truncation property, we obtain
\begin{align*}
\|\int f_{l,j}^{3}|v|^{\mu-1}\dd v\|_{L_{t,x}^{1}} & \lesssim \frac{2^{2j(s(m-2)-\mu+2-\gamma)}}{\d^{s(m-1)-\mu+2-\gamma}2^{l(1-s)}}\||v|^{-\gamma}g_{1,j}\|_{\calm_{TV}}.
\end{align*}

\smallskip\noindent
\textit{Step 4:} Conclusion. 
We aim to conclude by real interpolation. We set, for $\tauz>0$,
\begin{align*}
K(\tauz,\overline{f}_{l,j}):=\inf\{ & \|\overline{f}_{l,j}^{1}\|_{\LR{1}_{t,x}}+\tauz\|\overline{f}_{l,j}^{0}\|_{\LR{\rhon}_{t,x}}:\overline{f}_{l,j}^{0}\in \LR{\rhon}_{t,x}, \overline{f}_{l,j}^{1}\in \LR{1}_{t,x},\ \overline{f}_{l,j}=\overline{f}_{l,j}^{0}+\overline{f}_{l,j}^{1}\}.
\end{align*}
By the above estimates we obtain
\begin{align*}
K(\tauz,\overline{f}_{l,j}) & \lesssim \frac{2^{2j(s(m-2)-\mu+2-\gamma)}}{\d^{s(m-1)-\mu+2-\gamma}2^{l(1-s)}}(\||v|^{1-\gamma}g_{0}\|_{\calm_{TV}}+\||v|^{-\gamma}g_{1}\|_{\calm_{TV}})\\
 & \ \ +\tauz\left(\frac{\delta}{2^{2j}}\right)^{\mu-1+\rho}\|f\|_{\LR{\rhon}_{t,x,v}}.
\end{align*}
We now equilibrate the first and the second term on the right hand side, that is, we choose $\delta>0$ such that
\[
\frac{2^{2j(s(m-2)-\mu+2-\gamma)}}{\d^{s(m-1)-\mu+2-\gamma}2^{l(1-s)}}=\tauz\left(\frac{\delta}{2^{2j}}\right)^{\mu-1+\rho},
\]
that is,
\begin{align*}
 \d^{-a}c^{1-s}d^{-a+s}=\tauz\d^{b}d^b
\end{align*}
with $a:=s(m-1)-\mu+2-\gamma$, $b:=\mu-1+\rho$, $c:=2^{-l}$ and $d:=2^{-2j}$.
This yields
\[
\d=\tauz^{-\frac{1}{a+b}}c^{\frac{1-s}{a+b}}d^{\frac{s-a-b}{a+b}},
\]
and further
\[
\d^{-a}c^{1-s}d^{-a+s}=\tauz^{\frac{a}{a+b}}c^{\frac{(1-s)b}{a+b}}d^{\frac{sb}{a+b}}.
\]
Hence, with 
\[
\t:=\frac{a}{a+b}=\frac{s(m-1)-\mu+2-\gamma}{s(m-1)+1-\gamma+\rho}
\]
we obtain
\begin{align*}
\lefteqn{\tauz^{-\t}K(\tauz,\overline{f}_{l,j})}\\
 && \lesssim  2^{-l\frac{(1-s)(\mu-1+\rho)}{s(m-1)+1-\gamma+\rho}}2^{-2j\frac{s(\mu-1+\rho)}{s(m-1)+1-\gamma+\rho}}(\||v|^{1-\gamma}g_{0}\|_{\calm_{TV}}+\||v|^{-\gamma}g_{1}\|_{\calm_{TV}} + \|f\|_{\LR{\rhon}_{t,x,v}})\\
&&= 2^{-l\kappa_t}2^{-j\kappa_x}(\||v|^{1-\gamma}g_{0}\|_{\calm_{TV}}+\||v|^{-\gamma}g_{1}\|_{\calm_{TV}} + \|f\|_{\LR{\rhon}_{t,x,v}}).
\end{align*}
Observe that $1-\t + \frac{\t}{\rhon}=1-\t + \t(1-\rho)=1-\t\rho$, so that $(\LR{1}_{t,x},\LR{\rhon}_{t,x})_{\t,\infty}=\LR{p,\infty}_{t,x}$ with $p=\frac{1}{1-\t\rho}=\frac{a+b}{a(1-\rho)+b}=\frac{s(m-1)+1-\gamma+\rho}{\rho\mu+(1-\rho)(s(m-1)+1-\gamma)}$. Hence, we may take the supremum over $\tauz>0$ to obtain
\begin{equation}\label{av_est_lj}
\|\overline{f}_{l,j}\|_{\LR{p,\infty}_{t,x}}  \lesssim 2^{-l\kappa_t}2^{-j\kappa_x}(\||v|^{1-\gamma}g_{0}\|_{\calm_{TV}}+\||v|^{-\gamma}g_{1}\|_{\calm_{TV}} + \|f\|_{\LR{\rhon}_{t,x,v}}).
\end{equation}
Multiplying by $2^{l\kappa_t}2^{j\kappa_x}$ and taking the supremum over $j,l\in \Z$ yields \eqref{lem:av_est1}.

If we assume additionally $\overline{f}\in \LR{r}_{t,x}$, $r\neq p$, we choose for $q\in\vpp{\min\{p,r\},\max\{p,r\}}$ a corresponding $\vartheta\in\vpp{0,1}$ subject to $1/q=(1-\vartheta)/r+\vartheta/p$. Then using $(\LR{r}_{t,x},\LR{p,\infty}_{t,x})_{\vartheta,q}=\LR{q}_{t,x}$, together with \eqref{av_est_lj}, we obtain
\begin{align*}
\|\overline{f}_{l,j}\|_{\LR{q}_{t,x}} &\lesssim \|\overline{f}_{l,j}\|_{\LR{r}_{t,x}}^{1-\vartheta}\|\overline{f}_{l,j}\|_{\LR{p,\infty}_{t,x}}^\vartheta \nonumber\\
& \lesssim \|\overline{f}\|_{\LR{r}_{t,x}}^{1-\vartheta}2^{-l\vartheta\kappa_t}2^{-j\vartheta\kappa_x}(\||v|^{1-\gamma}g_{0}\|_{\calm_{TV}}+\||v|^{-\gamma}g_{1}\|_{\calm_{TV}} + \|f\|_{\LR{\infty}_{t,x,v}})^{\vartheta} \nonumber \\
&\le 2^{-l\vartheta\kappa_t}2^{-j\vartheta\kappa_x}(\||v|^{1-\gamma}g_{0}\|_{\calm_{TV}}+\||v|^{-\gamma}g_{1}\|_{\calm_{TV}} + \|f\|_{\LR{\rhon}_{t,x,v}} + \|\overline{f}\|_{\LR{r}_{t,x}}).
\end{align*}
Multiplying by $2^{l\vartheta\kappa_t}2^{j\vartheta\kappa_x}$ and taking the supremum over $j,l\in \Z$ yields \eqref{lem:av_est2}.

Finally we note that if $s=1$ and consequently $\kappa_t=0$, then the partition of unity $\{\eta_l\}_{l\in\Z}$ in the Fourier space connected to the time variable $t$ is not necessary. Hence, if we set $\alpha_\tau=0$ whenever Lemma \ref{lem:FM} is invoked and replace Theorem \ref{thm:FM} by its isotropic variant (cf.\@ Remark \ref{rem:FM}), we obtain
%
\begin{align*}
\|\overline{f}_{j}\|_{\LR{q}_{t,x}} &\lesssim 2^{-j\vartheta\kappa_x}(\||v|^{1-\gamma}g_{0}\|_{\calm_{TV}}+\||v|^{-\gamma}g_{1}\|_{\calm_{TV}} + \|f\|_{\LR{\rhon}_{t,x,v}} + \|\overline{f}\|_{\LR{r}_{t,x}}),
\end{align*}
which shows $\overline{f}\in \td{L}^q\dot B^{\vartheta\kappa_x}_{q,\infty}$.

It remains to consider the case when $f$ is not localized in $v$. We observe that for a smooth cut-off function $\psi\in \CRci(\R)$, the function $(t,x,v)\to f(t,x,v)\psi(v)=:f^\psi(t,x,v)$ is a solution to
\begin{align*}
  \diffop(\partial_t,\nabla_x,\vf) f^\psi(t,x,\vf)=\g_0^\psi(t,x,\vf)+g_1^{\psi'}(t,x,v)+\partial_\vf \g_1^\psi(t,x,\vf) \ \ton \R_t \times \R^d_x \times \R_\vf,
 \end{align*}
 where $\g_0^\psi$, $g_1^{\psi'}$ and $\g_1^\psi$ are defined analogously.
 Hence, estimate \eqref{av_est_lj} reads in this case
 \begin{align*}
\|\overline{f^\psi_{l,j}}\|_{\LR{p,\infty}_{t,x}} \le 2^{-l\vartheta\kappa_t}2^{-j\vartheta\kappa_x}(\||v|^{1-\gamma} (g_{0}^\psi+g_{1}^{\psi'})\|_{\calm_{TV}}+\||v|^{-\gamma}g_{1}^\psi\|_{\calm_{TV}} + \|f^\psi\|_{\LR{\rhon}_{t,x,v}}).\end{align*}
 Since $|v|^{-\gamma}g_1\in \calm_{TV}$ by assumption, there exists to $\eps_n\downarrow 0$ a sequence $r_n\uparrow\infty$ such that 
 \[
 \int_{\R_t\times \R^d_x\times \R_v} \chi_{\set{r_n\le |v|}} |v|^{-\gamma} g_1 \dd v \dd x \dd t\le \eps_n\]
  for all $n\in \N$. For $n\in\N$ and a smooth cut-off function $\psi\in \CRci(\R)$ with $\psi=1$ on $B_1(0)$ and $\supp\psi\in B_2(0)$, we define $\psi_n$ via $\psi_n(v):=\psi(v/n)$. Hence $\psi'_n$ is supported on $r_n\le |v|\le 2r_n$ and takes values in $[0,1/r_n]$, so that we may estimate
 \begin{align*}
  \||v|^{1-\gamma} g_{1}^{\psi_n'}\|_{\calm_{TV}}&=\int_{\R_t\times \R^d_x\times \R_v} |\psi'_n(v)||v|(|v|^{-\gamma}g_1)\dd v \dd x \dd t \\
  &=\int_{\R_t\times \R^d_x\times \R_v} \chi_{\set{r_n\le |v|\le 2 r_n}}|\psi'_n(v)||v|(|v|^{-\gamma}g_1)\dd v \dd x \dd t \\ &\lesssim \int \chi_{r_n\le |v|\le 2 r_n} |v|^{-\gamma} g_1 \dd v\le \eps_n.
 \end{align*}
 Thus, taking the limit $n\to\infty$ and using Fatou's lemma, we obtain \eqref{av_est_lj} also for general $f$. Multiplying by $2^{l\vartheta\kappa_t}2^{j\vartheta\kappa_x}$ and taking the supremum over $j,l\in \Z$, we may conclude as before.
\end{proof}
%
%
\begin{lem}\label{lem:av2}
 Assume $\gamma\in(-\infty,1)$, $m\in \vpp{1,\infty}$, $\mu\in [1,2-\gamma)$, $\rho\in(0,1]$, $\beta'=\frac{1}{\rho}$, and let $f$, $g_0$, $g_1$, and $\overline{f}$ be as in Lemma \ref{lem:av}. Define 
 \begin{align}\label{lem:av2_constants}
 \begin{split}
  p&:=\frac{1-\gamma+\rho}{\rho\mu+(1-\rho)(1-\gamma)}, \qquad \kappa_t:= \frac{\mu-1+\rho}{1-\gamma+\rho}. 
  \end{split}
 \end{align}
 If $\overline{f}\in \LR{r}_{t,x}$, $p\ne r\in[1,\infty]$, then for all $q\in\vpp{\min\{p,r\},\max\{p,r\}}$ we have $\overline{f}\in \td{L}^{q}_x\dot B^{\vartheta\kappa_t}_{q,\infty}$, where $\vartheta\in\vpp{0,1}$ is such that
 \begin{align*}
  \frac{1}{q}=\frac{1-\vartheta}{r}+\frac{\vartheta}{p}.
 \end{align*}
 Moreover,
 \begin{align}\label{lem:av2_est}
  &\norm{\overline{f}}_{\td{L}^{q}_x\dot B^{\vartheta\kappa_t}_{q,\infty}}\lesssim \||v|^{1-\gamma}g_{0}\|_{\calm_{TV}}+\||v|^{-\gamma}g_{1}\|_{\calm_{TV}} + \|f\|_{\LR{\rhon}_{t,x,v}} + \|\overline{f}\|_{\LR{r}_{t,x}}.
 \end{align}
 \end{lem}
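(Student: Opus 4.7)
The plan is to follow the proof of Lemma \ref{lem:av} specialized to $s=0$, but in symmetry with the $s=1$ treatment at the end of that proof, to skip the spatial Littlewood--Paley partition $\{\vp_j\}$ and thereby obtain a Chemin--Lerner estimate in $x$ rather than a fully anisotropic Besov estimate. The justification is symmetric to the $s=1$ case: at $s=0$ the elementary bound $|\tau|\le|\mcL(i\tau,i\xi,v)|$ is uniform in $\xi$, so no spatial frequency localization is needed. The numerology matches: at $s=0$ the constants in \eqref{lem:av_constants} specialize exactly to those in \eqref{lem:av2_constants}, and the condition $s>(\mu-2+\gamma)/(m-1)$ becomes the assumption $\mu<2-\gamma$.

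Concretely, set $f_l:=\mcF^{-1}_t\eta_l\mcF_t f$ and, in place of the $\xi$-dependent micro-local splitting of Lemma \ref{lem:av}, use the purely velocity-based decomposition $f_l=\psi_0(|v|/\d)f_l+\psi_1(|v|/\d)f_l=:f_l^0+f_l^1$. For $f_l^0$, the support condition $|v|\lesssim\d$, H\"older in $v$ with $\rhon'=1/\rho$, and the uniform $L^{\rhon}$-multiplier bound on $\eta_l$ yield $\|\overline{f_l^0}\|_{L^{\rhon}_{t,x}}\lesssim\d^{\mu-1+\rho}\|f\|_{L^{\rhon}_{t,x,v}}$. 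For $f_l^1$, the equation
\[
\mcL f_l^1=\psi_1(|v|/\d)g_{0,l}+\partial_v[\psi_1(|v|/\d)g_{1,l}]-\d^{-1}\psi_1'(|v|/\d)\sgn(v)g_{1,l}
\]
allows one to invert $\mcL$ and split $|v|^{\mu-1}=|v|^{\mu-2+\gamma}\cdot|v|^{1-\gamma}$. On the support where $|\tau|\sim 2^l$ and $|v|\gtrsim\d$, the hypothesis $\mu<2-\gamma$ gives the pointwise symbol bound $|v|^{\mu-2+\gamma}/|\mcL|\lesssim\d^{\mu-2+\gamma}/2^l$; combined with the isotropic variant of the anisotropic truncation property applied to the parabolic resolvent $\eta_l(\tau)/\mcL(i\tau,i\xi,v)$, this produces $\|\overline{f_l^1}\|_{L^1_{t,x}}\lesssim(\d^{\mu-2+\gamma}/2^l)(\||v|^{1-\gamma}g_0\|_{\calm_{TV}}+\||v|^{-\gamma}g_1\|_{\calm_{TV}})$, together with the analogous contributions from the commutator term in $\d^{-1}\psi_1'$ and the $\partial_v$ of the symbol (handled exactly as in Step 3 of Lemma \ref{lem:av}).

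Real interpolation between $L^1_{t,x}$ and $L^{\rhon}_{t,x}$ via the $K$-functional with equilibration $\d^{\mu-2+\gamma}/2^l=z\d^{\mu-1+\rho}$, i.e.\ $\d=(z2^l)^{-1/(1-\gamma+\rho)}$, reproduces verbatim the $s=0$ specialization of the final calculation in Lemma \ref{lem:av} and yields $\|\overline{f}_l\|_{L^{p,\infty}_{t,x}}\lesssim 2^{-l\kappa_t}M$, with $p$ and $\kappa_t$ as in \eqref{lem:av2_constants} and $M:=\||v|^{1-\gamma}g_0\|_{\calm_{TV}}+\||v|^{-\gamma}g_1\|_{\calm_{TV}}+\|f\|_{L^{\rhon}_{t,x,v}}$. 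A further real interpolation against the assumed bound $\overline f\in L^r_{t,x}$ via $(L^r_{t,x},L^{p,\infty}_{t,x})_{\vartheta,q}=L^q_{t,x}$ delivers $\|\overline{f}_l\|_{L^q_{t,x}}\lesssim 2^{-l\vartheta\kappa_t}(M+\|\overline f\|_{L^r_{t,x}})$; taking the supremum over $l\in\Z$ of this inequality, multiplied by $2^{l\vartheta\kappa_t}$, gives \eqref{lem:av2_est}. The initial qualitative reduction to compact $v$-support of $f$ is removed by the cut-off sequence $\psi_n(v)=\psi(v/n)$, $n\to\infty$, exactly as at the end of the proof of Lemma \ref{lem:av}. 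The main technical point -- and the only place where care beyond a direct translation of Lemma \ref{lem:av} is required -- is the uniform $\calm_{TV}$-multiplier bound on $\eta_l(\tau)/\mcL(i\tau,i\xi,v)$ \emph{without} spatial frequency localization, handled by the isotropic variant of the truncation property with respect to the parabolic scaling $|\xi|^2\sim 2^l/|v|^{m-1}$ intrinsic to the resolvent, already invoked in the $s=1$ discussion of Lemma \ref{lem:av}.
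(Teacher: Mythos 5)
Your proposal is correct and follows essentially the same route as the paper: the paper's proof of this lemma is precisely the $s=0$ analogue of Lemma \ref{lem:av} with the spatial Littlewood--Paley partition dropped, using the velocity-only splitting $f_l=\psi_0(|v|/\delta)f_l+\psi_1(|v|/\delta)f_l$, the bound $|v|^{\mu-2+\gamma}/|\mcL|\lesssim \delta^{\mu-2+\gamma}/2^l$ from $|\tau|\le|\mcL|$ and $\mu<2-\gamma$, the same $K$-functional equilibration, and the same second interpolation against $L^r_{t,x}$. The only cosmetic difference is that you pull the $\delta^{-1}\psi_1'$ commutator out of $\partial_v[\psi_1 g_{1,l}]$ at the level of the equation, whereas the paper produces it by integrating by parts in $v$ in its Step 3; the resulting terms and multiplier bounds are identical.
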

 
 \begin{proof}
By the same arguments as in the proof of Lemma \ref{lem:av}, we may assume that $f$ is localized in $v$. In fact, the whole proof of Lemma \ref{lem:av2} is similar to the one of Lemma \ref{lem:av}, with the modification that here we consider a micro-local decomposition of $f$ depending on the size of $v$ only and do not localize in the Fourier space connected to the spatial variable $x$. More precisely, let $\{\eta_l\}_{l\in\Z}$ be a partition of unity on $\R\setminus\set{0}$ as in Section \ref{FctSpc}. Then we define for $l\in\Z$
\begin{align*}
 f_{l}:=\calf^{-1}_{x}[\eta_l\calf_{t}f],
\end{align*}
where $\calf_{t}{f}_{l}(\tau,x,v)$ is supported on frequencies $|\tau|\sim2^{l}$ for $l\in\Z$.
Similarly, we define the decompositions $g_{0,l}$ and $g_{1,l}$ of $g_0$ and $g_1$, respectively. Moreover, we again consider a smooth function $\psi_{0}\in\CRci(\R)$ supported in $B_{2}(0)$ and set $\psi_{1}:=1-\psi_0$.
For $\d>0$ to be specified later we write 
\begin{align*}
f_l & =\psi_{0}\left(\frac{|\vf|}{\d}\right) f_l+\psi_{1}\left(\frac{|\vf|}{\d}\right) f_l =:f_l^{0}+f_l^{1}.
\end{align*}
Since $f$ is a solution to \eqref{av_eqn}, we have
\begin{align*}
\mcF_{t,x}^{-1}\mcL(i\tau,i\xi,v)\mcF_{t,x}f_l^{1}(t,x,v)=\psi_{1}\left(\frac{|\vf|}{\d}\right)\left(g_{0,l}(t,x,v)+\partial_{v}g_{1,l}(t,x,v)\right)
\end{align*}
and thus
\begin{align*}
f_l^{1}(t,x,v)= &  \mcF_{t,x}^{-1} \psi_{1}\left(\frac{|\vf|}{\d}\right)\frac{1}{\mcL(i\tau,i\xi,v)}\mcF_{t,x}g_{0,l}(t,x,v)\\
 & + \mcF_{t,x}^{-1} \psi_{1}\left(\frac{|\vf|}{\d}\right)\frac{1}{\mcL(i\tau,i\xi,v)}\mcF_{t,x}\partial_{v}g_{1,l}(t,x,v)\nonumber \\
=: & f_l^{2}(t,x,v)+f_l^{3}(t,x,v),\nonumber 
\end{align*}
so that we arrive at the decomposition
\begin{align*}
\bar{f}_l &:=\int f_l|v|^{\mu-1}\dd v=\int f_l^{0}|v|^{\mu-1}\dd v+\int f_l^{2}|v|^{\mu-1}\dd v+\int f_l^{3}|v|^{\mu-1}\dd v\\
& =:\bar{f}_l^{0}+\bar{f}_l^{2}+\bar{f}_l^{3}.
\end{align*}
Again, we treat the three contributions separately.

\smallskip\noindent
\textit{Step 1:} $f^0$. 
Let $l\in\Z$ be arbitrary, fixed. Since $|v|\lesssim \d$ on the support of $\psi_{0}\left(\frac{|\vf|}{\d}\right)$, using Minkowski's and H\"older's inequality, we have
\begin{align*}
\|\int f_{l}^{0}|v|^{\mu-1}\dd v\|_{L_{t,x}^{\rhon}} & = \|\int \psi_{0}\left(\frac{|\vf|}{\d}\right) |v|^{\mu-1} f_{l}\dd v\|_{L_{t,x}^{\rhon}} \\
 & \le \int |\psi_{0}|\left(\frac{|\vf|}{\d}\right) |v|^{\mu-1} \|f_{l}\|_{L_{t,x}^{\rhon}}\dd v \\
 &\lesssim \d^{\mu-1} \int|\psi_{0}|\left(\frac{|\vf|}{\d}\right) \|f_{l}\|_{L_{t,x}^{\rhon}}\dd v \\
 &\lesssim \d^{\mu-1} \|f\|_{L_{t,x,v}^{\rhon}}\left(\int |\psi_0|\left(\frac{|\vf|}{\d}\right)^{\rhon'}\dd v\right)^{\frac{1}{\rhon'}} \\
 & \lesssim \d^{\mu-1+\rho}\|f\|_{L_{t,x,v}^{\rhon}}.
\end{align*}

\smallskip\noindent
\textit{Step 2:} $f^2$. 
Let $l\in\Z$ be arbitrary, fixed.
Since $\mu\le 2-\gamma$, we have on the support of $\eta_l\psi_{1}\left(\frac{|\vf|}{\d}\right)$ (so that $|\tau|\sim 2^{l}$ and $|v|\ge \d$)
\begin{align*}
 \frac{|\vf|^{\mu-2+\gamma}}{|\mcL(i\tau,i\xi,v)|}&\lesssim \frac{|\vf|^{\mu-2+\gamma}}{|\tau|}\lesssim \frac{\d^{\mu-2+\gamma}}{2^l}.
\end{align*}
By Lemma \ref{lem:FM} applied with $\alpha_\xi=0$ and the isotropic variant of Theorem \ref{thm:FM} (cf.\@ Remark \ref{rem:FM}), $\frac{|\vf|^{\mu-2+\gamma}}{|\mcL(i\tau,i\xi,v)|}$ acts as a constant multiplier of order $\frac{\d^{\mu-2+\gamma}}{2^l}$ on the support of $\eta_l\psi_{1}\left(\frac{|\vf|}{\d}\right)$. Consequently
\begin{align*}
\|\int f_{l}^{2}|v|^{\mu-1}\dd v\|_{L_{t,x}^{1}} & =\|\int\mcF_{t,x}^{-1} \psi_{1}\left(\frac{|\vf|}{\d}\right)\frac{|\vf|^{\mu-2+\gamma}}{\mcL(i\tau,i\xi,v)}\mcF_{t,x}|v|^{1-\gamma}g_{0,l}\dd v\|_{L_{t,x}^{1}}\\
& \lesssim  \frac{\d^{\mu-2+\gamma}}{2^l}\||v|^{1-\gamma}g_{0}\|_{\calm_{TV}}.
\end{align*}

\smallskip\noindent
\textit{Step 3:} $f^{3}$. 
Let $l\in\Z$ arbitrary, fixed.
We observe (recall $\mcL(i\tau,i\xi,v) = i\tau + |v|^{m-1}|\xi|^2$)
\begin{align*}
\int f_{l}^{3}|v|^{\mu-1}\dd v & =-\int\mcF_{t,x}^{-1} \psi_{1}'\left(\frac{|\vf|}{\d}\right)\frac{\sgn(v)}{\d}\frac{|\vf|^{\mu-1}}{\mcL(i\tau,i\xi,v)}\mcF_{t,x}g_{1,l}\dd v \\
& -(\mu-1)\int\mcF_{t,x}^{-1} \psi_{1}\left(\frac{|\vf|}{\d}\right)\frac{\sgn(v)|\vf|^{\mu-2}}{\mcL(i\tau,i\xi,v)}\mcF_{t,x}g_{1,l}\dd v \\
& +\int\mcF_{t,x}^{-1} \psi_{1}\left(\frac{|\vf|}{\d}\right)\frac{|\vf|^{\mu-1} \partial_v\mcL(i\tau,i\xi,v)}{\mcL(i\tau,i\xi,v)^2}\mcF_{t,x}g_{1,l}\dd v \\
&=-\int\mcF_{t,x}^{-1} \psi_{1}'\left(\frac{|\vf|}{\d}\right)\frac{|\vf|}{\d}\frac{\sgn(v)|\vf|^{\mu-2+\gamma}}{\mcL(i\tau,i\xi,v)}\mcF_{t,x}|\vf|^{-\gamma}g_{1,l}\dd v \\
& -(\mu-1)\int\mcF_{t,x}^{-1} \psi_{1}\left(\frac{|\vf|}{\d}\right)\frac{\sgn(v)|\vf|^{\mu-2+\gamma}}{\mcL(i\tau,i\xi,v)}\mcF_{t,x}|\vf|^{-\gamma}g_{1,l}\dd v \\
& +(m-1)\int\mcF_{t,x}^{-1} \psi_{1}\left(\frac{|\vf|}{\d}\right)\frac{|\vf|^{\mu +m-3+\gamma}|\xi|^2}{\mcL(i\tau,i\xi,v)^2}\mcF_{t,x}|\vf|^{-\gamma}g_{1,l}\dd v
\end{align*}
Observe that $\psi_1'$ is supported on an annulus. Therefore, we have as before $|\tau|\sim 2^{l}$ and $|v|\ge \d$ on the support of $\eta_l\psi_{1}\left(\frac{|\vf|}{\d}\right)$, and additionally also $|v|\sim \d$ on the support of $\eta_l\psi_{1}'\left(\frac{|\vf|}{\d}\right)$. This last observation allows us to estimate the expression $\frac{|\vf|}{\d}$ appearing in the first integral on the right hand side by $\frac{|\vf|}{\d}\lesssim 1$. As in Step 2, we obtain
\begin{align*}
 \frac{|\vf|^{\mu-2+\gamma}}{|\mcL(i\tau,i\xi,v)|}&\lesssim \frac{\d^{\mu-2+\gamma}}{2^l},
\end{align*}
and, similarly,
\begin{align*}
 \frac{|\vf|^{\mu+m-3+\gamma}|\xi|^2}{|\mcL(i\tau,i\xi,v)|^2}&=\frac{|\vf|^{\mu-2+\gamma}}{|\mcL(i\tau,i\xi,v)|}\frac{|\vf|^{m-1}|\xi|^2}{|\mcL(i\tau,i\xi,v)|}\lesssim \frac{|\vf|^{\mu-2+\gamma}}{|\mcL(i\tau,i\xi,v)|}\lesssim \frac{\d^{\mu-2+\gamma}}{2^l}.
\end{align*}
In virtue of these estimates, Lemma \ref{lem:FM} applied with $\alpha_\xi=0$ and the isotropic variant of Theorem \ref{thm:FM} (cf.\@ Remark \ref{rem:FM}) show that the expressions
\begin{align*}
 \frac{|\vf|}{\d}\frac{\sgn(v)|\vf|^{\mu-2+\gamma}}{\mcL(i\tau,i\xi,v)}, \quad \frac{\sgn(v)|\vf|^{\mu-2+\gamma}}{\mcL(i\tau,i\xi,v)}, \quad \frac{|\vf|^{\mu +m-3+\gamma}|\xi|^2}{\mcL(i\tau,i\xi,v)^2}
\end{align*}
extend to constant multipliers of order $\frac{\d^{\mu-2+\gamma}}{2^l}$ on the supports of $\eta_l\psi_{1}'\left(\frac{|\vf|}{\d}\right)$ and $\eta_l\psi_{1}\left(\frac{|\vf|}{\d}\right)$, respectively. Hence, we obtain
\begin{align*}
\|\int f_{l}^{3}|v|^{\mu-1}\dd v\|_{L_{t,x}^{1}} & \lesssim \frac{\d^{\mu-2+\gamma}}{2^l}\||v|^{-\gamma}g_{1,j}\|_{\calm_{TV}}.
\end{align*}

\smallskip\noindent
\textit{Step 4:} Conclusion.
We aim to conclude by real interpolation. We set, for $\tauz>0$,
\begin{align*}
K(\tauz,\overline{f}_{l}):=\inf\{ & \|\overline{f}_{l}^{1}\|_{\LR{1}_{t,x}}+\tauz\|\overline{f}_{l}^{0}\|_{\LR{\rhon}_{t,x}}:\overline{f}_{l}^{0}\in \LR{\rhon}_{t,x}, \overline{f}_{l}^{1}\in \LR{1}_{t,x},\ \overline{f}_{l}=\overline{f}_{l}^{0}+\overline{f}_{l}^{1}\}.
\end{align*}
By the above estimates we obtain
\begin{align*}
K(\tauz,\overline{f}_{l}) & \lesssim \frac{\d^{\mu-2+\gamma}}{2^l}(\||v|^{1-\gamma}g_{0}\|_{\calm_{TV}}+\||v|^{-\gamma}g_{1}\|_{\calm_{TV}})+\tauz\d^{\mu-1+\rho}\|f\|_{\LR{\rhon}_{t,x,v}}.
\end{align*}
We now equilibrate the first and the second term on the right hand side, that is, we choose $\d>0$ such that
\[
\frac{\d^{\mu-2+\gamma}}{2^l}=\tauz\d^{\mu-1+\rho},
\]
that is,
\[
\d:=\tauz^{-\frac{1}{1-\gamma+\rho}}2^{-\frac{l}{1-\gamma+\rho}}.
\]
Hence, with 
$
\t:=\frac{-\mu+2-\gamma}{1-\gamma+\rho}
$
we obtain
\begin{align*}
\tauz^{-\t}K(\tauz,\overline{f}_{l}) & \lesssim 2^{-l\frac{\mu-1+\rho}{1-\gamma+\rho}}(\||v|^{1-\gamma}g_{0}\|_{\calm_{TV}}+\||v|^{-\gamma}g_{1}\|_{\calm_{TV}} + \|f\|_{\LR{\rhon}_{t,x,v}})\\
&= 2^{-l\kappa_t}(\||v|^{1-\gamma}g_{0}\|_{\calm_{TV}}+\||v|^{-\gamma}g_{1}\|_{\calm_{TV}} + \|f\|_{\LR{\rhon}_{t,x,v}}).
\end{align*}
As in Step 4 of the proof of Lemma \ref{lem:av} we use $(\LR{1}_{t,x},\LR{\rhon}_{t,x})_{\t,\infty}=\LR{p,\infty}_{t,x}$ with $p=\frac{1}{1-\t\rho}=\frac{1-\gamma+\rho}{\rho\mu+(1-\rho)(1-\gamma)}$ to obtain
\begin{align}\label{av2_est_lj}
\|\overline{f}_{l}\|_{\LR{p,\infty}_{t,x}} & \lesssim 2^{-l\kappa_t}(\||v|^{1-\gamma}g_{0}\|_{\calm_{TV}}+\||v|^{-\gamma}g_{1}\|_{\calm_{TV}} + \|f\|_{\LR{\rhon}_{t,x,v}}).
\end{align}
For $q\in\vpp{\min\{p,r\},\max\{p,r\}}$ we choose a corresponding $\vartheta\in\vpp{0,1}$ subject to $1/q=(1-\vartheta)/r+\vartheta/p$. Then using $(\LR{r}_{t,x},\LR{p,\infty}_{t,x})_{\vartheta,q}=\LR{q}_{t,x}$, together with \eqref{av2_est_lj}, we obtain
\begin{align*}
\|\overline{f}_{l}\|_{\LR{q}_{t,x}} &\lesssim \|\overline{f}_{l}\|_{\LR{r}_{t,x}}^{1-\vartheta}\|\overline{f}_{l}\|_{\LR{p,\infty}_{t,x}}^\vartheta \\
& \lesssim \|\overline{f}\|_{\LR{r}_{t,x}}^{1-\vartheta}2^{-l\vartheta\kappa_t}(\||v|^{1-\gamma}g_{0}\|_{\calm_{TV}}+\||v|^{-\gamma}g_{1}\|_{\calm_{TV}} + \|f\|_{\LR{\infty}_{t,x,v}})^{\vartheta} \\
&\le 2^{-l\vartheta\kappa_t}(\||v|^{1-\gamma}g_{0}\|_{\calm_{TV}}+\||v|^{-\gamma}g_{1}\|_{\calm_{TV}} + \|f\|_{\LR{\infty}_{t,x,v}} + \|\overline{f}\|_{\LR{r}_{t,x}}).
\end{align*}
Multiplying by $2^{l\vartheta\kappa_t}$ and taking the supremum over $l\in \Z$ yields \eqref{lem:av2_est}.
\end{proof}
%
\begin{cor}\label{cor:av0}
 Let $m\in\vpp{1,\infty}$, $\gamma\in\vpp{-\infty,m}$, $\mu\in[1,m+1-\gamma)$, $f\in \LR{1}_{t,x,v}\cap \LR{\infty}_{t,x,v}$ be a solution to \eqref{av_eqn}, and let $g_0$, $g_1$ and $\overline{f}$ be as in Lemma \ref{lem:av}. Let $q\in (1,\frac{m+1-\gamma}{\mu})$ and define
 \begin{align*}
  \td\kappa_x:= \frac{\mu q-1}{q}\frac{2}{m-\gamma}.
 \end{align*}
 If $\overline{f}\in \LR{1}(\R^{d+1})\cap \LR{q}(\R;\LR{1}(\R^d))$, then $\overline{f}\in \LR{q}(\R;\WSR{\sigma_x}{q}(\R^d))$ for all $\sigma_x\in[0,\td\kappa_x)$. Furthermore,
 \begin{align}\label{cor:av0_main_est}
  \norm{\overline{f}}_{\LR{q}_t(\WSR{\sigma_x}{q}_x)}\lesssim \||v|^{1-\gamma}g_{0}\|_{\calm_{TV}}+\||v|^{-\gamma}g_{1}\|_{\calm_{TV}} + \|f\|_{\LR{1}_{t,x,v} \cap \LR{\infty}_{t,x,v}} +  \|\overline{f}\|_{\LR{1}_{t,x}\cap \LR{q}_t\LR{1}_x}.
 \end{align}
\end{cor}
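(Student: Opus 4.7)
The plan is to apply Lemma~\ref{lem:av} with $s=1$ (so that $\kappa_{t}=0$) at an integrability exponent $p$ slightly larger than the target $q$, real-interpolate against the hypothesis $\overline{f}\in L^{1}_{t,x}$ to land at exponent $q$, and then pass from the resulting homogeneous dominating-mixed-derivative estimate to the Sobolev space $L^{q}_{t}W^{\sigma_{x},q}_{x}$ via Lemma~\ref{lem:emb_0}, using the second hypothesis $\overline{f}\in L^{q}_{t}L^{1}_{x}$ to control the low-frequency block.

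Given $q\in(1,\frac{m-\gamma+1}{\mu})$, I pick $p\in(q,\frac{m-\gamma+1}{\mu})$ and use Remark~\ref{rem:av} to find the corresponding $\rho=\rho(p)\in(0,1)$; setting $s=1$ in that remark yields $\rho(p)=\frac{(m-\gamma)(p-1)}{p(m-\gamma)+1-p\mu}$, and the condition $\rho(p)<1$ reduces precisely to $p<\frac{m-\gamma+1}{\mu}$. Since $f\in L^{1}_{t,x,v}\cap L^{\infty}_{t,x,v}$, interpolation gives $f\in L^{\beta}_{t,x,v}$ with $\beta=1/(1-\rho(p))$, which meets the integrability requirement of Lemma~\ref{lem:av}. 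That lemma then produces $\overline{f}\in S^{(0,\kappa_{x}(p))}_{p,\infty,(\infty)}\dot{B}$ with $\kappa_{x}(p)=\frac{2(\mu p-1)}{p(m-\gamma)}$. Real interpolation against $\overline{f}\in L^{1}_{t,x}$ (with $r=1$), combined with the final statement of Lemma~\ref{lem:av} which applies since $s=1$, yields
\[
\overline{f}\in\td{L}^{q}_{t}\dot{B}^{\Phi(p)}_{q,\infty},\qquad \Phi(p):=\vartheta\,\kappa_{x}(p)=\frac{2(q-1)(\mu p-1)}{q(p-1)(m-\gamma)},
\]
where $\vartheta=(1-1/q)/(1-1/p)\in(0,1)$. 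Since $\Phi$ is rational and continuous in $p$ on $(1,\infty)$ with $\Phi(q)=\td\kappa_{x}$, for any $\sigma_{x}\in[0,\td\kappa_{x})$ I can choose $p$ close enough to $q$ so that $\Phi(p)>\sigma_{x}$.

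Next I bridge from homogeneous to inhomogeneous Besov regularity. The estimate above only controls the high-frequency blocks $\phi_{j}\overline{f}$ for $j\ge 1$, whereas the low-frequency block $\calf_{x}^{-1}\phi_{0}\calf_{x}\overline{f}$ is controlled in $L^{q}_{t,x}$ by $\|\overline{f}\|_{L^{q}_{t}L^{1}_{x}}$ via Young's inequality, since $\phi_{0}$ is a Schwartz function on $\R^{d}$. Adding these bounds gives $\|\overline{f}\|_{\td{L}^{q}_{t}B^{\Phi(p)}_{q,\infty}}\lesssim\|\overline{f}\|_{\td{L}^{q}_{t}\dot{B}^{\Phi(p)}_{q,\infty}}+\|\overline{f}\|_{L^{q}_{t}L^{1}_{x}}$. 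Since $\Phi(p)>\sigma_{x}$, Lemma~\ref{lem:emb_0} applied with $\eps:=\Phi(p)-\sigma_{x}>0$ embeds $\td{L}^{q}_{t}B^{\Phi(p)}_{q,\infty}$ into $L^{q}(\R;W^{\sigma_{x},q}(\R^{d}))$, and chaining the estimates yields~\eqref{cor:av0_main_est}.

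The principal technical point is the verification of the limiting identity $\Phi(p)\to\td\kappa_{x}$ as $p\downarrow q$; this is exactly why the specific formulas in Lemma~\ref{lem:av} and Remark~\ref{rem:av} deliver the sharp critical exponent that appears in Corollary~\ref{cor:av0}. Everything else reduces to a routine assembly of the lemmas already established.
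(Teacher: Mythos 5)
Your proposal is correct and follows essentially the same route as the paper: apply Lemma~\ref{lem:av} with $s=1$ at an exponent $\td p\in(q,\frac{m+1-\gamma}{\mu})$ close to $q$ (with the same choice of $\rho$, written there explicitly rather than via Remark~\ref{rem:av}), interpolate against $\overline{f}\in L^1_{t,x}$, and pass to $L^q_tW^{\sigma_x,q}_x$ through Lemma~\ref{lem:emb_0} after controlling the low spatial frequencies by $\|\overline{f}\|_{L^q_tL^1_x}$. The only cosmetic difference is that the paper removes the low-frequency block at the outset via Bernstein's lemma, whereas you absorb the $\phi_0$ block at the end via Young's inequality; both are equivalent.
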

\begin{proof}
 We recall the decomposition $f_j=\calf_{x}^{-1}\vp_j\calf_{x}f$ introduced in the proof of Lemma \ref{lem:av}. We argue that it suffices to consider the case when $f_{j}=0$ for all $j<0$. Indeed, the part $f_{<}:=\sum_{j<0}f_j$ can be estimated in view of Bernstein's Lemma (cf.\@ \cite[Lemma 2.1]{BCD11}) via
 \begin{align*}
  \|\overline{f}_{<}\|_{\LR{q}_t(\WSR{\sigma_x}{q}_x)}\lesssim \|\overline{f}\|_{\LR{q}_{t}\LR{1}_x}.
 \end{align*}
 We aim to control $\overline{f}$ in $\td{L}^{q}_t\dot B^{\vt\kappa_x}_{q,\infty}$ where $\vt\in(0,1)$ is sufficiently large such that $\sigma_x<\vt\kappa_x$, and then use Lemma \ref{lem:emb_0} to the effect of
 \begin{align*}
  \|\overline{f}\|_{\LR{q}_t(\WSR{\sigma_x}{q}_x)}\lesssim \|\overline{f}\|_{\td{L}^{q}_t B^{\vt\kappa_x}_{q,\infty}} = \|\overline{f}\|_{\td{L}^{q}_t \dot B^{\vt\kappa_x}_{q,\infty}},
 \end{align*}
 where the last equality is apparent from the definition of the homogeneous and non-homogeneous Lebesgue-Besov spaces and the fact that the low frequencies of $f$ vanish.
 Thus, it remains to establish
 \begin{align}\label{cor:av0_est2}
  \norm{\overline{f}}_{\td{L}^{q}\dot B^{\vt\kappa_x}_{q,\infty}}\lesssim \||v|^{1-\gamma}g_{0}\|_{\calm_{TV}}+\||v|^{-\gamma}g_{1}\|_{\calm_{TV}} + \|f\|_{\LR{1}_{t,x,v}\cap \LR{\infty}_{t,x,v}}  + \|\overline{f}\|_{\LR{1}_{t,x}}.
 \end{align}
 For $\td p\in (1,\frac{m+1-\gamma}{\mu})$, choose
 \begin{align*}
  \rho:=\frac{(\td p-1)(m-\gamma)}{1+\td p(m-\mu-\gamma)}. 
 \end{align*}
 We claim that $\rho$ is positive and well-defined: Since the nominator is positive due to $\td p>1$ and $m>\gamma$, it remains to check that the denominator is positive. This is obvious for $\mu\le m-\gamma$. For $\mu> m-\gamma$, we observe that due to $\mu<m+1-\gamma$ we have
 \begin{align*}
  \td p<\frac{m+1-\gamma}{\mu}<\frac{1}{\mu+\gamma-m},
 \end{align*}
 which implies $1+\td p(m-\mu-\gamma)>0$. Moreover, $\td p<\frac{m+1-\gamma}{\mu}$ can be rewritten as $(\td p-1)(m-\gamma)<1+\td p(m-\mu-\gamma)$, so that $\rho\in(0,1)$. Hence, we may apply Lemma \ref{lem:av} with this choice of $\rho$ and with $s=1$. One checks that in this case the integrability and differentiability exponents in \eqref{lem:av_constants} read $p=\td p$, $\kappa_t=0$, and $\kappa_x=\frac{\mu \td p-1}{\td p}\frac{2}{m-\gamma}$.
 
 Choose $\td p\in(q, \frac{m+1-\gamma}{\mu})$ so that $\td\kappa_x<\kappa_x$ and define $\vt\in(0,1)$ through
 \begin{align*}
  \frac{1}{q}=1-\vt+\frac{\vt}{\td p}.
 \end{align*}
 We may choose $\td p\in(q, \frac{m+1-\gamma}{\mu})$ sufficiently small so that $\vt\in(0,1)$ is so large that $\sigma_x<\vt\td\kappa_x<\vt\kappa_x$. In view of \eqref{lem:av_est2} (with the space $S^{\vt\overline{\kappa}}_{q,\infty}\dot B=S^{(0,\vt\kappa_x)}_{q,\infty}\dot B$ replaced by $\td{L}^{q}_{t}\dot B^{\vt\kappa_x}_{q,\infty}$) we obtain
 \begin{align*}
  \|\overline{f}_{j}\|_{\LR{q}_{t,x}}\lesssim 2^{-j\vt\kappa_x}(\||v|^{1-\gamma}g_{0}\|_{\calm_{TV}}+\||v|^{-\gamma}g_{1}\|_{\calm_{TV}} + \|f\|_{\LR{\rhon}_{t,x,v}} + \|\overline{f}\|_{\LR{1}_{t,x}}),
 \end{align*}
 where we recall the notation $\overline{f}_{j}:=\int \calf^{-1}_{x}[\vp_j \calf_{x}f]|v|^{\mu-1} \dd v$.
 If we multiply by $2^{j\vt\kappa_x}$ and take the supremum over $j\in \Z$, this yields
 \begin{align*}
  \norm{\overline{f}}_{\td L^{q}_t\dot B^{\vt\kappa_x}_{q,\infty}}\lesssim \||v|^{1-\gamma}g_{0}\|_{\calm_{TV}}+\||v|^{-\gamma}g_{1}\|_{\calm_{TV}} + \|f\|_{\LR{\rhon}_{t,x,v}} + \|\overline{f}\|_{\LR{1}_{t,x}}.
 \end{align*}
 By the estimate $\|f\|_{\LR{\rhon}_{t,x,v}} \lesssim \|f\|_{\LR{1}_{t,x,v}} + \|f\|_{\LR{\infty}_{t,x,v}}$, this gives \eqref{cor:av0_est2}.
\end{proof}


\begin{cor}\label{cor:av}
 Let $m\in\vpp{1,\infty}$, $\gamma\in\vpp{-\infty,1}$, $f\in \LR{1}_{t,x,v}\cap \LR{\infty}_{t,x,v}$ be a solution to \eqref{av_eqn}, and let $g_0$ and $g_1$ be as in Lemma \ref{lem:av}. Assume $\overline{f}\in \LR{r}_{t,x}$ for all $r\in [1,m+1-\gamma)$, where $\overline{f}(t,x):=\int f(t,x,v)\dd v$. Let $\td p\in (2-\gamma,m+1-\gamma)$ and define
 \begin{align*}
  \td\kappa_t:= \frac{m+1-\gamma-\td p}{\td p}\frac{1}{m-1}, \qquad
  \td\kappa_x:= \frac{\td p-2+\gamma}{\td p}\frac{2}{m-1}.
 \end{align*}
 Then $\overline{f}\in \WSR{\sigma_t}{\td p}(\R;\WSR{\sigma_x}{\td p}(\R^d))$ for all $\sigma_t\in[0,\td\kappa_t)$ and $\sigma_x\in[0,\td\kappa_x)$. Furthermore, there is an $r\in (\tilde p, m+1-\gamma)$, such that
 \begin{align}\label{cor:av_main_est}
  \norm{\overline{f}}_{\WSR{\sigma_t}{\td p}(\WSR{\sigma_x}{\td p})}\lesssim \||v|^{1-\gamma}g_{0}\|_{\calm_{TV}}+\||v|^{-\gamma}g_{1}\|_{\calm_{TV}} + \|f\|_{\LR{1}_{t,x,v}\cap \LR{\infty}_{t,x,v}} +  \|\overline{f}\|_{\LR{r}_{t,x}}.
 \end{align}
\end{cor}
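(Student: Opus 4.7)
The plan is to apply Lemma~\ref{lem:emb_1}, which reduces matters to showing $\overline f \in S^{(\tilde\sigma_t, \tilde\sigma_x)}_{\tilde p,\infty} B$ for auxiliary exponents $\tilde\sigma_t \in (\sigma_t, \tilde\kappa_t)$ and $\tilde\sigma_x \in (\sigma_x, \tilde\kappa_x)$, both strictly positive. By Lemma~\ref{lem:emb_2}, this non-homogeneous anisotropic Besov space equals the intersection of $L^{\tilde p}(\R^{d+1})$, the two mixed Chemin--Lerner spaces $\td L^{\tilde p}_x \dot B^{\tilde\sigma_t}_{\tilde p,\infty}$ and $\td L^{\tilde p}_t \dot B^{\tilde\sigma_x}_{\tilde p,\infty}$, and the homogeneous mixed-derivative Besov space $S^{(\tilde\sigma_t,\tilde\sigma_x)}_{\tilde p,\infty} \dot B$; the proof proceeds by bounding $\overline f$ in each of these four spaces separately. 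The $L^{\tilde p}$ piece is immediate from H\"older interpolation of the assumed $\overline f \in L^1_{t,x}\cap L^r_{t,x}$ for any $r\in(\tilde p, m+1-\gamma)$.

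For the homogeneous piece $S^{(\tilde\sigma_t,\tilde\sigma_x)}_{\tilde p,\infty}\dot B$, the plan is to invoke Lemma~\ref{lem:av} with $\mu=1$ and the scaling-critical choice $s^\ast := (\tilde p - 2 + \gamma)/(m-1) \in (0,1)$. A direct calculation shows that at the (unreachable) limit $\rho = 1$ this choice produces exactly $p = s^\ast(m-1)+2-\gamma = \tilde p$, together with Besov exponents $(\kappa_t,\kappa_x)=(\tilde\kappa_t,\tilde\kappa_x)$. Taking $\rho \in (0,1)$ close to $1$ and $s$ slightly perturbed from $s^\ast$ yields an integrability $p$ close to but different from $\tilde p$, while $(\kappa_t,\kappa_x)$ still strictly dominates $(\tilde\sigma_t,\tilde\sigma_x)$; interpolating~\eqref{lem:av_est1} against the assumption $\overline f\in L^r$ for $r$ on the opposite side of $\tilde p$ from $p$ via~\eqref{lem:av_est2} produces $\overline f \in S^{\vartheta(\kappa_t,\kappa_x)}_{\tilde p,\infty}\dot B$ with $\vartheta$ close enough to $1$ that $\vartheta\kappa_t > \tilde\sigma_t$ and $\vartheta\kappa_x > \tilde\sigma_x$. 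By choosing the sign of the perturbation of $s$, one arranges $r\in(\tilde p, m+1-\gamma)$.

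The pure-space Chemin--Lerner norm $\td L^{\tilde p}_t \dot B^{\tilde\sigma_x}_{\tilde p,\infty}$ is handled exactly as in the proof of Corollary~\ref{cor:av0}: apply Lemma~\ref{lem:av} with $s=1$ (which makes $\kappa_t=0$) and interpolate with $\overline f \in L^r$. The pure-time norm $\td L^{\tilde p}_x \dot B^{\tilde\sigma_t}_{\tilde p,\infty}$ is fully analogous but uses Lemma~\ref{lem:av2} in place of Lemma~\ref{lem:av}; the hypothesis $\gamma<1$ is what grants access to that lemma, and one checks that the corresponding interpolation with $\overline f\in L^r$ for $r$ close to $m+1-\gamma$ yields a time-regularity exponent approaching $\tilde\kappa_t$. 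The main conceptual obstacle---and the reason none of the three averaging lemmas can be bypassed---is that the joint Besov estimate of Lemma~\ref{lem:av} alone does not imply the mixed Chemin--Lerner norms, because summability in the ``other'' frequency variable fails at low frequencies; this is precisely the content of Lemma~\ref{lem:emb_2}. With all four contributions in hand, a single application of Lemma~\ref{lem:emb_2} gives $\overline f \in S^{(\tilde\sigma_t,\tilde\sigma_x)}_{\tilde p,\infty}B$, Lemma~\ref{lem:emb_1} delivers the claimed Sobolev--Slobodecki\u{\i} regularity, and consolidating the $L^r$ norms entering through each interpolation step produces a single $r\in(\tilde p, m+1-\gamma)$ in~\eqref{cor:av_main_est}.
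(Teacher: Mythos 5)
Your proposal is correct and follows essentially the same route as the paper's proof: reduce via Lemma \ref{lem:emb_2} and Lemma \ref{lem:emb_1} to four separate norms, control the joint homogeneous norm $S^{\overline\sigma}_{\tilde p,\infty}\dot B$ by Lemma \ref{lem:av} tuned near the critical parameters $(s^\ast,\rho\to 1)$, the pure-space Chemin--Lerner norm by Lemma \ref{lem:av} with $s=1$, and the pure-time norm by Lemma \ref{lem:av2}, with the $L^{\tilde p}$ piece coming from the hypothesis. The only deviation is cosmetic: in the joint-norm step you perturb $p$ slightly off $\tilde p$ and apply \eqref{lem:av_est2} once, whereas the paper fixes $p=\tilde p$ exactly and sandwiches $\tilde p$ between two interpolated exponents $q_0<\tilde p<q_1$ before real-interpolating back to $L^{\tilde p}$; both variants are valid.
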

\begin{proof} As we need to pass from homogeneous spaces (the output of Lemma \ref{lem:av} and Lemma \ref{lem:av2}) to a non-homogeneous space, our strategy is to invoke Lemma \ref{lem:emb_2} and Lemma \ref{lem:emb_1}. The input to Lemma \ref{lem:emb_2} requires four pieces of information, namely control of $\overline{f}$ in $\LR{\td p}(\R^{d+1})$, $\td{L}^{\td p}_x\dot B^{\sigma_t}_{\td p,\infty}$, $\td{L}^{\td p}_t\dot B^{\sigma_x}_{\td p,\infty}$ and $S^{\overline{\sigma}}_{\td p,\infty}\dot B$. Since the control of $\overline{f}$ in $\LR{\td p}(\R^{d+1})$ is ensured by assumption, we concentrate on the other three contributions. Note that the main difficulty lies in the condition that both the integrability exponent and the orders of differentiability have to match exactly.

\smallskip\noindent
 \textit{Step 1:} $\overline{f}\in S^{\overline{\sigma}}_{\td p,\infty}\dot B$.  
 Let $r\in(\td p,m+1-\gamma)$ to be chosen in Step 3. We claim that there exist functions $k_t,k_x:(0,\infty)\to(0,\infty)$ with $k_t(\eps),k_x(\eps)\to 0$ as $\eps\to 0$, such that it holds for all $\eps\ll 1$
 \begin{align}\label{cor:av_est}
  \norm{\overline{f}}_{S^{\overline{\sigma}}_{\td p,\infty}\dot B}\lesssim \||v|^{1-\gamma}g_{0}\|_{\calm_{TV}}+\||v|^{-\gamma}g_{1}\|_{\calm_{TV}} + \|f\|_{\LR{1}_{t,x,v}\cap \LR{\infty}_{t,x,v}} + \|\overline{f}\|_{\LR{r}_{t,x}},
 \end{align}
 where we have used the notation $\sigma_t:=\td\kappa_t-k_t(\eps)$ and $\sigma_x:=\td\kappa_x-k_x(\eps)$.
 
 We apply Lemma \ref{lem:av} with $\mu=1$, $\rho=1-\eps$, and $s:=s_\eps\in (0,1)$, where $s_\eps$ is chosen so that the integrability assertion in \eqref{lem:av_constants} reads $p=\td p$; this is possible for $\rho$ close to $1$ in view of Remark \ref{rem:av}.
 Moreover, we may choose $\vt\in(0,1)$ such that for $\kappa_t$ and $\kappa_x$ defined through \eqref{lem:av_constants} satisfy $\vt\kappa_t=\td\kappa_t-k_t(\eps)$ and $\vt\kappa_x=\td\kappa_x-k_x(\eps)$ for some functions $k_t$ and $k_x$ as above. Then for $1<q_0<\td p<q_1<m+1-\gamma$ so that
 \begin{align*}
  \frac{1}{q_0}=1-\vt+\frac{\vt}{\td p}, \qquad
  \frac{1}{q_1}=\frac{1-\vt}{r}+\frac{\vt}{\td p},
 \end{align*}
 in view of \eqref{lem:av_est2} we obtain that
 \begin{align*}
  \|\overline{f}_{l,j}\|_{\LR{q_i}_{t,x}}\lesssim 2^{-l\vt\kappa_t}2^{-j\vt\kappa_x}(\||v|^{1-\gamma}g_{0}\|_{\calm_{TV}}+\||v|^{-\gamma}g_{1}\|_{\calm_{TV}} + \|f\|_{\LR{\rhon}_{t,x,v}} + \|\overline{f}\|_{\LR{1}_{t,x}\cap \LR{r}_{t,x}}),
 \end{align*}
 for $i=0,1$, where we recall the notation $\overline{f}_{l,j}:=\int \calf^{-1}_{t,x}[\eta_l\vp_j \calf_{t,x}f] \dd v$.
 Since $(\LR{q_0}_{t,x},\LR{q_1}_{t,x})_{\t,\td p}=\LR{\td p}_{t,x}$ for an appropriate $\t\in(0,1)$, we thus obtain
 \begin{align*}
  \|\overline{f}_{l,j}\|_{\LR{\td p}_{t,x}}\lesssim 2^{-l\vt\kappa_t}2^{-j\vt\kappa_x}&\left(\||v|^{1-\gamma}g_{0}\|_{\calm_{TV}}+\||v|^{-\gamma}g_{1}\|_{\calm_{TV}} \right.\\
  & \left. \ + \|f\|_{\LR{\rhon}_{t,x,v}} + \|\overline{f}\|_{\LR{1}_{t,x}} + \|\overline{f}\|_{\LR{r}_{t,x}}\right),
 \end{align*}
 which after multiplying by $2^{l\vt\kappa_t}2^{j\vt\kappa_x}$ and taking the supremum over $l,j\in \Z$ yields
 \begin{align}\label{cor:av_est_b}
  \norm{\overline{f}}_{S^{\vt(\kappa_t,\kappa_x)}_{\td p,\infty}\dot B}\lesssim \||v|^{1-\gamma}g_{0}\|_{\calm_{TV}}+\||v|^{-\gamma}g_{1}\|_{\calm_{TV}} + \|f\|_{\LR{\rhon}_{t,x,v}} + \|\overline{f}\|_{\LR{1}_{t,x}} + \|\overline{f}\|_{\LR{r}_{t,x}}.
 \end{align}
 By the estimate $\|f\|_{\LR{\rhon}_{t,x,v}} + \|\overline{f}\|_{\LR{1}_{t,x}}\lesssim \|f\|_{\LR{1}_{t,x,v}} + \|f\|_{\LR{\infty}_{t,x,v}}$, this gives \eqref{cor:av_est}.
 
 \smallskip\noindent
 \textit{Step 2:} $\overline{f}\in \td{L}^{\td p}_t\dot B^{\sigma_x}_{\td p,\infty}$.  
 In this step we establish
 \begin{align}\label{cor:av_est2}
  \norm{\overline{f}}_{\td{L}^{\td p}\dot B^{\sigma_x}_{\td p,\infty}}\lesssim \||v|^{1-\gamma}g_{0}\|_{\calm_{TV}}+\||v|^{-\gamma}g_{1}\|_{\calm_{TV}} + \|f\|_{\LR{1}_{t,x,v}\cap \LR{\infty}_{t,x,v}}  + \|\overline{f}\|_{\LR{r}_{t,x}}.
 \end{align}
 Choose
 \begin{align*}
  \rho:=\frac{(\td p-1)(m-\gamma)}{1+\td p(m-1-\gamma)}. 
 \end{align*}
 We claim that $\rho$ is positive and well-defined: Since the nominator is positive due to $\td p>1$ and $m>\gamma$, it remains to check that the denominator is positive. This is obvious for $\gamma\le m-1$. For $\gamma> m-1$, we observe that
 \begin{align*}
  \td p<m+1-\gamma<\frac{1}{1+\gamma-m},
 \end{align*}
 which implies $1+\td p(m-1-\gamma)>0$. Moreover, $\td p<m+1-\gamma$ can be rewritten as $(\td p-1)(m-\gamma)<1+\td p(m-1-\gamma)$, so that $\rho\in(0,1)$. Hence, we may apply Lemma \ref{lem:av} with this choice of $\rho$ and with $s=1$. One checks that in this case the integrability and differentiability exponents in \eqref{lem:av_constants} read $p=\td p$, $\kappa_t=0$, and $\kappa_x=\frac{p-1}{p}\frac{2}{m-\gamma}$. We observe that $\kappa_x\ge \td\kappa_x$ and hence we find $\vt\in (0,1)$ such that $\vt\kappa_x=\td\kappa_x-k_x(\eps)$.
 The same interpolation argument as in Step 1 gives now the estimate \eqref{cor:av_est2}.
 
 \smallskip\noindent
 \textit{Step 3:} $\overline{f}\in \td{L}^{\td p}_x\dot B^{\sigma_t}_{\td p,\infty}$. 
 In this step we show that there is some $r\in (\td p, m+1-\gamma)$ such that
 \begin{align}\label{cor:av_est3}
  &\norm{\overline{f}}_{\td{L}^{\td p}_x\dot B^{\sigma_t}_{\td p,\infty}}\lesssim \||v|^{1-\gamma}g_{0}\|_{\calm_{TV}}+\||v|^{-\gamma}g_{1}\|_{\calm_{TV}} + \|f\|_{\LR{\infty}_{t,x,v}} + \|\overline{f}\|_{\LR{r}_{t,x}}.
 \end{align}
 We apply Lemma \ref{lem:av2} with $\mu=1$ and $\rho=1$. In this case, \eqref{lem:av2_constants} reads $p=2-\gamma$ and $\kappa_t=\frac{1}{2-\gamma}$. Since $\td p>2-\gamma$, we have $\td\kappa_t<\kappa_t$. Hence, we can choose $\vt\in (0,1)$, such that $\vt\kappa_t=\td\kappa_t-k_t(\eps)$. In particular,
 \begin{align*}
  \vt<\frac{\td\kappa_t}{\kappa_t}=\frac{m+1-\gamma-\td p}{\td p}\frac{2-\gamma}{m-1}<\frac{2-\gamma}{\td p}, 
 \end{align*}
 so that $r=\frac{\td p(2-\gamma)(1-\vt)}{2-\gamma-\vt \td p}$ is well defined. Since $r$ is increasing in $\vt$ due to $\td p>2-\gamma$, we see that $r\in (\td p,m+1-\gamma)$. We have $\frac{1}{\td p}=\frac{1-\vartheta}{r}+\frac{\vartheta}{p}$, and hence Lemma \eqref{lem:av2} gives estimate \eqref{cor:av_est3}.
 
 \smallskip\noindent
 \textit{Step 4:} Conclusion. 
 Since $\overline{f}\in \LR{\td p}_{t,x}$ by assumption, Lemma \ref{lem:emb_2} combined with Lemma \ref{lem:emb_1} yields the result.
\end{proof}


\begin{cor}\label{cor:av3}
 Let $m\in\vpp{1,\infty}$, $\gamma\in\vpp{-\infty,m}$, and let $f\in \LR{1}_{t,x,v}\cap \LR{\infty}_{t,x,v}$ be a solution to \eqref{av_eqn}. Let $g_0$ and $g_1$ be as in Lemma \ref{lem:av} and assume additionally
 \begin{align*}
  |g_{0}|(t,x,v) \in \calm_{TV}(\R_{t}\times\R_{x}^{d}\times\R_{v}).
 \end{align*}
 Assume $\td p\in (2-\gamma,m+1-\gamma)\cap (1,m+1-\gamma)$ and define
 \begin{align*}
  \td\kappa_t:= \frac{m+1-\gamma-\td p}{\td p}\frac{1}{m-1}, \qquad
  \td\kappa_x:= \frac{\td p-2+\gamma}{\td p}\frac{2}{m-1}.
 \end{align*}
 If $\overline{f}\in \LR{r}(\R^{d+1})\cap \LR{1}(\R;\LR{\td p}(\R^d))$ for all $r\in [1,m+1-\gamma)$, where $\overline{f}(t,x):=\int f(t,x,v)\dd v$, and if $\int|\vf|^{m-1} \f \dd v \in \LR{1}(\R^{d+1})$, then $\overline{f}\in \WSR{\sigma_t}{\td p}(\R;\WSR{\sigma_x}{\td p}(\R^d))$ for all $\sigma_t\in[0,\td\kappa_t)$ and $\sigma_x\in[0,\td\kappa_x)$. Furthermore, there is an $r\in (\tilde p, m+1-\gamma)$, such that
 \begin{align}\label{cor:av3_main_est}
 \begin{split}
  & \norm{\overline{f}}_{\WSR{\sigma_t}{\td p}(\WSR{\sigma_x}{\td p})}\lesssim \|g_{0}\|_{\calm_{TV}} + \||v|^{1-\gamma}g_{0}\|_{\calm_{TV}}+\||v|^{-\gamma}g_{1}\|_{\calm_{TV}}\\
  & \qquad  \qquad + \|f\|_{\LR{1}_{t,x,v}\cap \LR{\infty}_{t,x,v}} 
  + \|\overline{f}\|_{\LR{1}_t\LR{\td p}_x\cap \LR{r}_{t,x}} + \|\int|\vf|^{m-1} \f \dd v\|_{\LR{1}_{t,x}}.
 \end{split}
 \end{align}
\end{cor}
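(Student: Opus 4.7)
The proof follows the four-step scheme from the proof of Corollary \ref{cor:av}. Steps 1 and 2, establishing $\overline{f} \in S^{\overline{\sigma}}_{\td p,\infty}\dot B$ and $\overline{f} \in \td{L}^{\td p}_t\dot B^{\sigma_x}_{\td p,\infty}$ via Lemma \ref{lem:av}, carry over verbatim: Lemma \ref{lem:av} only requires $\gamma < m$, and the choice of $\rho$ and $s$ together with the interpolation argument remain valid in the present range of $\gamma$. The resulting bounds involve only the terms already present on the right-hand side of \eqref{cor:av3_main_est}, and in particular do not require the extra contributions $\|g_0\|_{\calm_{TV}}$ or $\|\int|v|^{m-1}f\dd v\|_{\LR{1}_{t,x}}$.

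The substantive new ingredient is Step 3: producing $\overline{f} \in \td{L}^{\td p}_x\dot B^{\sigma_t}_{\td p,\infty}$ without Lemma \ref{lem:av2}, which is unavailable here because it demands $\gamma < 1$. I would split $\overline{f} = \overline{f}_{<} + \overline{f}_{\geq}$ along spatial frequencies, with $\overline{f}_{<} := \sum_{j<0} \mcF_x^{-1}\vp_j\mcF_x\overline{f}$ supported at $|\xi| \lesssim 1$. The high-frequency piece is handled by summing the $S^{\overline{\sigma}}_{\td p,\infty}\dot B$ bound from Step 1 geometrically over $j \geq 0$,
\begin{align*}
\|\mcF_t^{-1}\eta_l\mcF_t\overline{f}_{\geq}\|_{\LR{\td p}_{t,x}} \leq \sum_{j\geq 0}\|\overline{f}_{l,j}\|_{\LR{\td p}_{t,x}} \lesssim 2^{-\sigma_t l}\sum_{j\geq 0} 2^{-\sigma_x j} M \lesssim 2^{-\sigma_t l} M,
\end{align*}
where $M$ denotes the right-hand side of \eqref{cor:av3_main_est} and the geometric sum converges because $\sigma_x > 0$. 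For the low-frequency piece $\overline{f}_{<}$ the plan is to use the kinetic equation: integrating \eqref{av_eqn} in $v$ (after a standard cutoff argument in $v$ to justify discarding $\int \partial_v g_1\dd v$) yields
\begin{align*}
\partial_t \overline{f} = \Delta_x \int|v|^{m-1} f \dd v + \int g_0\dd v.
\end{align*}
After the low-pass filter in $x$, $\Delta_x$ becomes a bounded Fourier multiplier and convolution in $x$ with a Schwartz kernel turns a measure into an $\LR{1}_{t,x}$ function, so
\begin{align*}
\|\partial_t \overline{f}_{<}\|_{\LR{1}_{t,x}} \lesssim \|\int|v|^{m-1} f\dd v\|_{\LR{1}_{t,x}} + \|g_0\|_{\calm_{TV}},
\end{align*}
which introduces precisely the two extra terms in \eqref{cor:av3_main_est}. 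For a time-frequency block at level $l > 0$, $\mcF_t^{-1}\eta_l\mcF_t\overline{f}_{<}$ is bandlimited in $(t,x)$ at scales $(2^l,1)$, and using the $2^{-l}$-bounded multiplier $\eta_l/(i\tau)$ together with an anisotropic Bernstein estimate gives
\begin{align*}
\|\mcF_t^{-1}\eta_l\mcF_t\overline{f}_{<}\|_{\LR{\td p}_{t,x}} \lesssim 2^{-l}\cdot 2^{l(1-1/\td p)}\|\partial_t \overline{f}_{<}\|_{\LR{1}_{t,x}} \lesssim 2^{-l/\td p} M.
\end{align*}
Since the hypothesis $\td p \in (2-\gamma, m+1-\gamma)\cap(1, m+1-\gamma)$ forces $\td p > 2-\gamma$, one checks that $\td\kappa_t < 1/\td p$ and thus $\sigma_t < 1/\td p$, so $\sup_{l>0} 2^{(\sigma_t - 1/\td p) l} < \infty$. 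For $l \leq 0$ the factor $2^{\sigma_t l}$ is harmless and $\mcF_t^{-1}\eta_l\mcF_t$ is uniformly bounded on $\LR{\td p}_{t,x}$, so $\|\mcF_t^{-1}\eta_l\mcF_t\overline{f}_{<}\|_{\LR{\td p}_{t,x}} \lesssim \|\overline{f}\|_{\LR{\td p}_{t,x}}$, controlled by interpolating $\overline{f} \in \LR{1}_{t,x}$ (from $f \in \LR{1}_{t,x,v}$) with $\overline{f} \in \LR{r}_{t,x}$.

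Combining the three resulting homogeneous Besov-type bounds with the $\LR{\td p}_{t,x}$ bound on $\overline{f}$ via Lemma \ref{lem:emb_2} gives $\overline{f} \in S^{\overline{\sigma}}_{\td p,\infty} B$, and Lemma \ref{lem:emb_1} concludes $\overline{f} \in \WSR{\sigma_t}{\td p}(\R;\WSR{\sigma_x}{\td p}(\R^d))$ with the estimate \eqref{cor:av3_main_est}. The main obstacle is Step 3, namely exploiting the equation for the low spatial modes to replace Lemma \ref{lem:av2}, together with the delicate conversion of the $\LR{1}$-based bound on $\partial_t \overline{f}_{<}$ into an $\LR{\td p}$-based estimate via the bandlimited Bernstein inequality --- this is exactly what forces the two extra hypotheses $\|g_0\|_{\calm_{TV}} < \infty$ and $\|\int|v|^{m-1}f\dd v\|_{\LR{1}_{t,x}} < \infty$ to appear in Corollary \ref{cor:av3} but not in Corollary \ref{cor:av}.
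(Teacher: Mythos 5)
Your proposal is correct and follows essentially the same route as the paper: only Step 3 of Corollary \ref{cor:av} needs adapting, and the key new ingredient — integrating the kinetic equation in $v$ on the low spatial modes, dividing by $i\tau$ on each time-frequency block, and upgrading the resulting $\LR{1}_{t,x}$ bound (which is what forces $\|g_0\|_{\calm_{TV}}$ and $\|\int|v|^{m-1}f\dd v\|_{\LR{1}_{t,x}}$ into the estimate) to $\LR{\td p}_{t,x}$ via Bernstein using $\sigma_t<\td\kappa_t<1/\td p$ — is exactly the paper's argument. The only differences are bookkeeping (the paper splits into low time frequency, high-high, and high time/low space pieces and invokes Lemmas \ref{lem:emb_0}--\ref{lem:emb_1}, where you split only in space and sum geometrically in $j$, noting that the available decay is $2^{-j\vt\kappa_x}$ with $\vt\kappa_x>0$ even when $\sigma_x=0$), which does not change the substance.
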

\begin{proof} It suffices to adapt Step 3 of the proof of Corollary \ref{cor:av}, that is the control of $\overline{f}$ in $\td{L}^{\td p}_x\dot B^{\sigma_t}_{\td p,\infty}$.
 
 \smallskip\noindent
 \textit{Step 3}. $\overline{f}\in \td{L}^{\td p}_x\dot B^{\sigma_t}_{\td p,\infty}$. 
  In this step we show that there is some $r\in (\td p, m+1-\gamma)$ such that
 \begin{align}\label{cor:av3_est3}
 \begin{split}
  \norm{\overline{f}}_{\td{L}^{\td p}_x\dot B^{\sigma_t}_{\td p,\infty}}& \lesssim \|g_{0}\|_{\calm_{TV}} + \||v|^{1-\gamma}g_{0}\|_{\calm_{TV}}+\||v|^{-\gamma}g_{1}\|_{\calm_{TV}} \\ 
  & + \|f\|_{\LR{1}_{t,x,v}\cap \LR{\infty}_{t,x,v}}  + \|\overline{f}\|_{\LR{1}_t\LR{\td p}_x\cap \LR{r}_{t,x}} + \|\int|\vf|^{m-1} \f \dd v\|_{\LR{1}_{t,x}}.
 \end{split}
 \end{align}
 We split $f$ into three contributions
 \begin{align*}
  f&=\calf^{-1}_t\psi_0(\tau)\calf_t f + \calf^{-1}_{t,x}(1-\psi_0(\tau))(1-\phi_0(\xi)) \calf_{t,x} f \\
   & \ \ \ \ + \calf^{-1}_{t,x}(1-\psi_0(\tau))\phi_0(\xi) \calf_{t,x} f \\
  &=:f^1+f^2+f^3.
 \end{align*}
 The low time-frequency part $f^1$ can be estimated in view of Lemma \ref{lem:emb_0} and Bernstein's Lemma (cf.\@ \cite[Lemma 2.1]{BCD11}) via
 \begin{align}\label{cor:av3_est5}
  \|\overline{f}^1\|_{\td{L}^{\td p}_x\dot B^{\sigma_t}_{\td p,\infty}}\lesssim \|\overline{f}^1\|_{\td{L}^{\td p}_x B^{\sigma_t}_{\td p,\infty}}\lesssim \|\overline{f}^1\|_{\WSR{\sigma_t+\eps}{\td p}(\LR{\td p}_x)}\lesssim \|\overline{f}\|_{\LR{1}_{t}\LR{\td p}_x}.
 \end{align}
 Next, we apply Lemma \ref{lem:av} with $\mu=1$, sufficiently large $\rho\in(0,1)$ and sufficiently small $s\in (\frac{\gamma-1}{m-1},1]$ so that \eqref{lem:av_constants} implies $p<\td p$ and $\kappa_t>\td\kappa_t$. Hence, we can choose $\vt\in (0,1)$, such that $\td\kappa_t>\vt\kappa_t>\td\kappa_t-k_t(\eps)$. In particular, in light of Remark \ref{rem:av}
 \begin{align*}
  \vt<\frac{\td\kappa_t}{\kappa_t}=\frac{m+1-\gamma-\td p}{m+\rho-\gamma-p\rho+p(1-\rho)(\gamma-m)}\frac{p\rho}{\td p}<\frac{p}{\td p}, \qquad \tif \ 1-\rho\ll 1,
 \end{align*}
 so that $r=\frac{\td p p (1-\vt)}{p-\vt \td p}$ is well defined. Since $r$ is increasing in $\vt$ due to $\td p>p$, we see that $r\in (\td p,m+1-\gamma)$. We have $\frac{1}{\td p}=\frac{1-\vartheta}{r}+\frac{\vartheta}{p}$, and hence Lemma \ref{lem:av} gives
 \begin{align*}
  &\norm{\overline{f}}_{S^{\vt\overline{\kappa}}_{\td p,\infty}\dot B}\lesssim \||v|^{1-\gamma}g_{0}\|_{\calm_{TV}}+\||v|^{-\gamma}g_{1}\|_{\calm_{TV}} + \|f\|_{\LR{1}_{t,x,v}\cap \LR{\infty}_{t,x,v}} + \|\overline{f}\|_{\LR{r}_{t,x}}.
 \end{align*}
 Thus, since $f^2$ is supported only on $\eta_l\vp_j$ for non-negative $l,j\in\Z$, Lemma \ref{lem:emb_0} and Lemma \ref{lem:emb_1} show in view of the definition of the homogeneous and non-homogeneous Besov spaces and $\sigma_t<\vt\kappa_t$ as well as $0<\vt\kappa_x$
 \begin{align*}
  \norm{\overline{f}^2}_{\td{L}^{\td p}_x\dot B^{\sigma_t}_{\td p,\infty}}=\norm{\overline{f}^2}_{\td{L}^{\td p}_x B^{\sigma_t}_{\td p,\infty}}
  \lesssim \norm{\overline{f}^2}_{S^{\vt\overline{\kappa}}_{\td p,\infty} B}  = \norm{\overline{f}}_{S^{\vt\overline{\kappa}}_{\td p,\infty}\dot B}. 
 \end{align*}
 Thus,
 \begin{equation}\label{cor:av3_est4}
 \norm{\overline{f}^2}_{\td{L}^{\td p}_x\dot B^{\sigma_t}_{\td p,\infty}}\lesssim \||v|^{1-\gamma}g_{0}\|_{\calm_{TV}}\!+\!\||v|^{-\gamma}g_{1}\|_{\calm_{TV}}\!+\! \|f\|_{\LR{1}_{t,x,v}\cap \LR{\infty}_{t,x,v}}\!+\!\|\overline{f}\|_{\LR{r}_{t,x}}.
 \end{equation}
 It remains to estimate the contribution of $f^3$. For $l\in \Z$, we introduce $f^3_l:=\calf^{-1}_t\eta_l(\tau)\calf_t f^3$. Since $f^3_l=0$ for $l<0$, we may concentrate on the case $l\ge 0$. Observe that $f^3_{l}$ solves the equation
  \begin{align*}
   f^3_{l}= & -m|\vf|^{m-1}\calf^{-1}_{t,x}\frac{|\xi|^2}{i\tau}\eta_l(\tau)\phi_0(\xi)\calf_{t,x}\f + \calf^{-1}_{t,x}\frac{\phi_0(\xi)}{i\tau}\calf_{t,x}  g_{0,l}\\ 
   & + \calf^{-1}_{t,x}\frac{\phi_0(\xi)}{i\tau}\calf_{t,x} \partial_\vf g_{1,l}.
  \end{align*}
  Integrating in $v$, we obtain
  \begin{align*}
   \overline{f}^3_{l}=-m \int |\vf|^{m-1}\calf^{-1}_{t,x}\frac{|\xi|^2}{i\tau}\eta_l(\tau)\phi_0(\xi)\calf_{t,x}\f \dd v + \calf^{-1}_{t,x}\frac{1}{i\tau}\phi_0(\xi)\calf_{t,x} \int g_{0,l,j} \dd v.
  \end{align*}
  Since $|\xi|^2$ acts as a constant multiplier on the support of $\phi_0$ and $\tau^{-1}$ acts as a constant multiplier of order $2^{-l}$ on the support of $\eta_l$, it follows by Bernstein's Lemma
  \begin{align*}
   \norm{\overline{f}^3_{l}}_{\LR{\td p}_{t,x}}&\lesssim 2^{l(1-\frac{1}{\td p})}\norm{\overline{f}^3_{l}}_{\LR{1}_{t,x}}\lesssim 2^{-l\frac{1}{\td p}}(\|\int|\vf|^{m-1} \f \dd v\|_{\LR{1}_{t,x}} + \|g_0\|_{\calm_{TV}}).
  \end{align*}
  Since $\td p>2-\gamma$, we have
  \begin{align*}
   \sigma_t<\td\kappa_t=\frac{m+1-\gamma-\td p}{\td p} \frac{1}{m-1}<\frac{1}{\td p}.
  \end{align*}
  In view of $l\ge 0$ this yields
  \begin{align*}
   \norm{\overline{f}^3_{l}}_{\LR{\td p}_{t,x}}&\lesssim 2^{-l\sigma_t}(\|\int|\vf|^{m-1} \f \dd v\|_{\LR{1}_{t,x}} + \|g_0\|_{\calm_{TV}}).
  \end{align*}
  Multiplying by $2^{l\sigma_t}$ and taking the supremum over $l\in\Z$, we conclude
  \begin{align}\label{cor:av3_est6}
   \norm{\overline{f}^3}_{\td{L}^{\td p}_x\dot B^{\sigma_t}_{\td p,\infty}}&\lesssim \|\int|\vf|^{m-1} \f \dd v\|_{\LR{1}_{t,x}} + \|g_0\|_{\calm_{TV}}.
  \end{align}
 Collecting \eqref{cor:av3_est5}, \eqref{cor:av3_est4} and \eqref{cor:av3_est6}, we arrive at \eqref{cor:av3_est3}.
\end{proof}

%
\section{Application to Porous Medium Equations}\label{App_PME}

In this section, we provide proofs of our main results by applying the averaging lemmata obtained in the previous section to entropy solutions to \eqref{pme_sys}.

 \begin{proof}[Proof of Theorem \ref{lem:pme}]
We first argue that we have $u\in \LR{s}_{t,x}$ for all $s\in[1,m-1+\rho)$. Since $T<\infty$,  Theorem \ref{thm:wp-kinetic} gives
\begin{align}\label{lem:pme_L1bound}
 \norm{u}_{\LR{1}_{t,x}}\lesssim \sup_{t\in[0,T]}\|\uf(t)\|_{\LR{1}_{x}} \lesssim \norm{u_0}_{\LR{1}_x}+\norm{S}_{\LR{1}_{t,x}},
\end{align}
so that we may concentrate on $s>1$.
Let $\f$ be the kinetic function corresponding to $\uf$ and solving \eqref{pme_sys_kin}. 
 In order to apply Corollary \ref{cor:av0} with $\mu=1$ and $\sigma_x=0$, we need to extend \eqref{pme_sys_kin} to all times $t\in \R$, which can be achieved by multiplication with a smooth cut-off function $\vp\in \CRci(0,T)$ with $0\le \vp\le 1$. Hence, we set $g_0:=\delta_{v=u(t,x)}S+\partial_t\vp f$ and $g_1:=q$. Let $\gamma:=2-\rho$, so that $s\in (1,m+1-\gamma)$.
 From \eqref{cor:av0_main_est} we obtain
 \begin{align*}
  \norm{\vp\uf}_{L^{s}_{t,x}}&\lesssim \||v|^{\rhoe-1}g_{0}\|_{\calm_{TV}}+\||v|^{\rhoe-2}g_{1}\|_{\calm_{TV}} + \|\vp f\|_{\LR{1}_{t,x,v}\cap \LR{\infty}_{t,x,v}} + \|\vp \uf\|_{\LR{1}_{t,x}\cap \LR{s}_{t}\LR{1}_{x}} \\
  &\lesssim \||v|^{\rhoe-1}g_{0}\|_{\calm_{TV}}+\||v|^{\rhoe-2}g_{1}\|_{\calm_{TV}} + \|f\|_{\LR{1}_{t,x,v}\cap \LR{\infty}_{t,x,v}} + \sup_{t\in[0,T]}\|\uf(t)\|_{\LR{1}_{x}}.
 \end{align*}
 We note that since trivially $f\in \LR{\infty}_{t,x,v}$ with norm bounded by $1$, estimate \eqref{lem:pme_L1bound} gives
 \begin{align*}
  \lefteqn{\norm{f}_{\LR{1}_{t,x,v} \cap \LR{\infty}_{t,x,v}} + \sup_{t\in[0,T]}\|\uf(t)\|_{\LR{1}_{x}}} \\
  & & \lesssim  \norm{u}_{\LR{1}_{t,x}}+ 1 + \sup_{t\in[0,T]}\|\uf(t)\|_{\LR{1}_{x}}\lesssim \norm{u_0}_{\LR{1}_x}+\norm{S}_{\LR{1}_{t,x}}+ 1.
 \end{align*}
Next, we check that $|v|^{\rhoe-1}g_0 \in \calm_{TV}$. Indeed, we observe that $(\rhoe-1)\rhoe':=\rhoe$, and hence, applying Lemma \ref{lem:ph-est},
\begin{align*}
 \||v|^{\rhoe-1}g_0\|_{\calm_{TV}}&=\||v|^{\rhoe-1}(\delta_{v=u(t,x)}S+\partial_t\vp f)\|_{\calm_{TV}}\lesssim \||u|^{\rhoe-1}S\|_{L^{1}_{t,x}}+ \|\partial_t\vp |u|^{\rhoe}\|_{L^1_{t,x}} \\
 &\lesssim \||u|^{(\rhoe-1)\rhoe'}\|_{L^{1}_{t,x}} +\||S|^{\rhoe}\|_{L^{1}_{t,x}} + \|\partial_t\vp |u|^{\rhoe}\|_{L^1_{t,x}} \\ 
 &\lesssim \|u_0\|_{L^{\rhoe}_{x}}^{\rhoe} + \|S\|_{L^{\rhoe}_{t,x}}^{\rhoe} + \|\partial_t\vp |u|^{\rhoe}\|_{L^1_{t,x}}.
\end{align*}
Utilizing Lemma \ref{lem:ph-est} once more to the effect of
\begin{align*}
 \||v|^{\rhoe-2}g_1\|_{\calm_{TV}}=\||v|^{\rhoe-2}q\|_{\calm_{TV}}\lesssim \|u_0\|_{L^{\rhoe}_{x}}^{\rhoe} + \|S\|_{L^{\rhoe}_{t,x}}^{\rhoe},
\end{align*}
we obtain 
\begin{align*}
  \|\vp u\|_{\LR{s}_{t,x}}\lesssim \|u_0\|_{L^1_x\cap L^{\rhoe}_{x}}^{\rhoe} + \|S\|_{L^1_{t,x}\cap L^{\rhoe}_{t,x}}^{\rhoe}+\|\partial_t\vp |u|^{\rhoe}\|_{L^1_{t,x}}+1.
 \end{align*}
We may set $\vp_n(t)=\psi(nt)-\psi(nt-T/2)$, where $\psi\in\CRi(\R)$ with $0\le \psi\le 1$, $\supp\psi\subset(0,\infty)$, $\psi(t)=1$ for $t>T/2$ and $\|\partial_t\psi\|_{L^1}=1$. For $n\to\infty$, $\vp_n$ converges to $1_{[0,T]}$ in the supremum norm, while $\partial_t\vp_n$ is a smooth approximation of $\delta_{\set{t=0}}-\delta_{\set{t=T}}$. Therefore, $\|\vp_n u\|_{\LR{s}_{t,x}}\to \|u\|_{\LR{s}_{t,x}}$ and by an application of Lemma \ref{lem:ph-est} $\|\partial_t\vp_n |u|^{\rhoe}\|_{L^1_{t,x}}\to \||u|(0)^\rhoe-|u|(T)^\rhoe\|_{L^1_{x}}\lesssim \|u_0\|_{L^\rhoe_{x}}^\rhoe+\|S\|_{L^{\rhoe}_{t,x}}^{\rhoe}$, so that $u\in L^{s}([0,T]\times \R^d)$ and
\begin{align}\label{lem:pme_Lmbound}
  \|u\|_{\LR{s}_{t,x}}\lesssim \|u_0\|_{L^1_x\cap L^{\rhoe}_{x}}^{\rhoe} + \|S\|_{L^1_{t,x}\cap L^{\rhoe}_{t,x}}^{\rhoe}+1.
 \end{align}

\newcounter{counter:lem_pme} 
 \refstepcounter{counter:lem_pme} 
  
  (\arabic{counter:lem_pme}). 
  We apply Corollary \ref{cor:av0} once more. Let $f$, $\vp$, $g_0$, $g_1$ and $\gamma$ be as above. Then, in particular $p\in (1,\frac{m+1-\gamma}{\mu})$.
 From \eqref{cor:av0_main_est} we obtain
 \begin{align*}
  \lefteqn{\norm{\vp\uf^{[\mu]}}_{L^{p}_{t} (\WSR{\sigma_x}{p})}}\\
  &&\lesssim \||v|^{1-\gamma}g_{0}\|_{\calm_{TV}}+\||v|^{-\gamma}g_{1}\|_{\calm_{TV}} + \|f\|_{\LR{1}_{t,x,v}\cap \LR{\infty}_{t,x,v}} + \|\uf^{[\mu]}\|_{\LR{1}_{t,x}\cap \LR{p}_t\LR{1}_x}.
 \end{align*}
 The first three contributions on the right hand side are estimated as above. For the last contribution, we note $1\le \mu<p\mu$ and thus
 \begin{align*}
  \|\uf^{[\mu]}\|_{\LR{1}_{t,x}\cap \LR{p}_t\LR{1}_x}&\lesssim \|\uf^{[\mu]}\|_{\LR{p}_t\LR{1}_x}=\|\uf\|_{\LR{p\mu}_{t}\LR{\mu}_x}^{\mu}\lesssim (\|\uf\|_{\LR{p\mu}_{t}\LR{1}_x} + \|\uf\|_{\LR{p\mu}_{t,x}})^{\mu} \\
  &\lesssim (\sup_{t\in[0,T]}\|\uf(t)\|_{\LR{1}_x} + \|\uf\|_{\LR{p\mu}_{t,x}})^{\mu} \lesssim \sup_{t\in[0,T]}\|\uf(t)\|_{\LR{1}_x}^\mu + \|\uf\|_{\LR{p\mu}_{t,x}}^\mu +1.
 \end{align*}
 Furthermore, \eqref{lem:pme_L1bound} together with \eqref{lem:pme_Lmbound} applied with $s=p\mu\in(1,m-1+\rhoe)$ shows
 \begin{align*}
\sup_{t\in[0,T]}\|\uf(t)\|_{\LR{1}_x}^\mu + \|\uf\|_{\LR{p\mu}_{t,x}}^\mu +1 \lesssim \norm{\uf_0}_{L_x^1\cap L_x^{\rhoe}}^{\mu\rhoe} + \norm{S}_{L_{t,x}^1\cap L_{t,x}^{\rhoe}}^{\mu\rhoe} + 1.
\end{align*}
Hence, arguing as above by taking the limit $\vp_n\to 1_{[0,T]}$, we obtain $u^{[\mu]}\in L^{p}(\R; \WSR{\sigma_x}{p}(\R^d))$ and \eqref{lem:pme_est2}.

\refstepcounter{counter:lem_pme} 
  
  (\arabic{counter:lem_pme}).\label{lem:pme_2} The proof is similar to the first part, but we use Corollary \ref{cor:av} instead of Corollary \ref{cor:av0}. Again we localize in time by multiplying with a smooth cut-off function $\vp\in \CRci(0,T)$ with $0\le\vp\le 1$ and set $g_0$ and $g_1$ as before. Choose $\gamma:=2-\rhoe$, so that $p\in(2-\gamma,m+1-\gamma)$. 
From \eqref{cor:av_main_est} in Corollary \ref{cor:av} we obtain
\begin{align*}
 \norm{\vp u}_{\WSR{\sigma_t}{p}(\WSR{\sigma_x}{p})}&\lesssim \||v|^{1-\gamma}g_{0}\|_{\calm_{TV}}+\||v|^{-\gamma}g_{1}\|_{\calm_{TV}} + \|f\|_{\LR{1}_{t,x,v} \cap \LR{\infty}_{t,x,v}} + \|u\|_{\LR{r}_{t,x}},
\end{align*}
where $r\in (\rhoe,m-1+\rhoe)$.
The terms involving $g_0$, $g_1$ and $f$ can be estimated as above, while the $\LR{r}_{t,x}$-norm of $u$ can be estimated by \eqref{lem:pme_Lmbound}. 
Choosing $\vp_n$ as above, we hence infer that $\vp_n u$ is bounded in $\WSR{\sigma_t}{p}(0,T;\WSR{\sigma_x}{p}(\R^d))$ and
\begin{align*}
 \sup_{n\in\N}\norm{\vp_n u}_{\WSR{\sigma_t}{p}(\WSR{\sigma_x}{p})}&\lesssim \|u_0\|_{L^1_x\cap L^{\rhoe}_{x}}^{\rhoe} + \|S\|_{L^1_{t,x}\cap L^{\rhoe}_{t,x}}^{\rhoe}+1.
\end{align*}
Since $\vp_n u\to u 1_{[0,T]}$ in the sense of distributions, we obtain the result by the weak lower semi-continuity of the norm in $\WSR{\sigma_t}{p}(0,T;\WSR{\sigma_x}{p}(\R^d))$.  \qedhere
\end{proof}

 \begin{proof}[Proof of Corollary \ref{cor:pme}]\mbox{ }
   
   \newcounter{counter:cor_pme} 
 \refstepcounter{counter:cor_pme} 
  
  \medskip
  (\arabic{counter:cor_pme}).
   Let $\sigma_x\in[0,\frac{2\mu}{m})$. We apply Theorem \ref{lem:pme}~(i) with $p=\frac{m}{\mu}$ for sufficiently small $\eta\in(1,\rhoe]$ so that $\sigma_x<\frac{\mu p -1}{p}\frac{2}{m-2+\eta}=\frac{2\mu}{m}\frac{m-1}{m-2+\eta}$ and observe that for all $q\in[1,p]$ we have the embedding $L^{p}(0,T; \WSR{\sigma_x}{p}(\R^d))\subset L^{q}(0,T; \WSR{\sigma_x}{q}(\calo))$.
   
   \medskip
   \refstepcounter{counter:cor_pme} 
    (\arabic{counter:cor_pme})\label{cor:pme_2}. For $s>0$ we have, with $p=s(m-1)+1\in(1,m]$,
   \begin{align*}
    \kappa_t=\frac{1-s}{s(m-1)+1}=\frac{m-p}{p}\frac{1}{m-1}, \quad
    \kappa_x=\frac{2s}{s(m-1)+1}=\frac{p-1}{p}\frac{2}{m-1}.
   \end{align*}
   Hence, in this case the assertion follows by an application of Theorem \ref{lem:pme}~(ii) with sufficiently small $\eta\in(1,\rhoe]$ such that $p>\rhoe$ and $\sigma_x<\frac{p-\rhoe}{p}\frac{2}{m-1}$ combined with the embedding \[\WSR{\sigma_t}{p}(0,T;\WSR{\sigma_x}{p}(\R^d))\subset \WSR{\sigma_t}{q}(0,T;\WSR{\sigma_x}{q}(\calo)).\]
   
   If $s=0$ and $\sigma_t\in[0,1)$, we may choose $s_0>0$ such that $\sigma_t<\frac{1-s_0}{s_0(m-1)+1}=:\kappa_t(s_0)$, and  the result follows by the embedding 
   \[
   \WSR{\kappa_t(s_0)}{s_0(m-1)+1}(0,T;\LR{s_0(m-1)+1}(\calo))\subset \WSR{\sigma_t}{1}(0,T;\LR{1}(\calo)).  \qedhere
   \]
 \end{proof}

\begin{proof}[Proof of Theorem \ref{cor:pme_l1}]
 The proof is similar to the one of Theorem \ref{lem:pme} (\ref{lem:pme_2}), but we discriminate between small and large velocity contributions to the kinetic function. Let $\f$ be the kinetic function corresponding to $\uf$ and solving \eqref{pme_sys_kin}. We extend again to all times $t\in \R$ by multiplying with a smooth cut-off function $\vp\in \CRci(0,T)$ with $0\le \vp\le 1$. Further, we split $f=:f^<+f^>$ and $q=:q^<+q^>$ into a small-velocity and a large-velocity part by multiplying with a smooth cut-off function $\psi_0$ respectively $\psi_1:=1-\psi_0$ in $v$. This gives rise to the two equations
  \begin{align*}
   \partial_t(\vp\f^<)-m|\vf|^{m-1}\Delta_x(\vp\f^<) &= \vp\psi_0\delta_{v=u(t,x)}S + \partial_\vf (\vp q^<) - \vp q\partial_\vf\psi_0 + \partial_t\vp f^<, \\
   \partial_t(\vp \f^>)-m|\vf|^{m-1}\Delta_x(\vp \f^>) &= \vp\psi_1\delta_{v=u(t,x)}S + \partial_\vf (\vp q^>) + \vp q\partial_\vf\psi_0 + \partial_t\vp f^>,
  \end{align*}
  Integrating $f^<$ and $f^>$ in $v$, we obtain a decomposition of $u=u^<+u^>$.
 
  The proof proceeds in several steps: In first the three steps, we argue that $u\in \LR{s}(0,T;\LR{s}(\R^d))$ for all $s\in[1,m+\frac{2}{d})$ if $d\ge 2$ and $s\in[1,m+1)$ if $d=1$. With this additional bound, we can conclude the higher-order estimates in the last three steps of the proof. We only detail the proof for $d\ge 2$, the case $d=1$ being similar.
  
  \newcounter{cor:pme_l1_prf} 
  \refstepcounter{cor:pme_l1_prf} 
  \textit{Step} \arabic{cor:pme_l1_prf}\label{cor:pme_l1_prf_st1}. In this step we establish for $\rho\in (m,\frac{md}{d-2})$ the bound
  \begin{align}\label{cor:pme_l1_s1}
  \|u^<\|_{\LR{m}_{t}\LR{\rho}_x}\lesssim \|u_0\|_{L^1_x} + \|S\|_{L^1_{t,x}} +1.
  \end{align}
  Set $g_0:=\vp\psi_0\delta_{v=u(t,x)}S + \partial_t\vp f^< - \vp q\partial_\vf\psi_0$, $g_1:=\vp q^<$, and
  \begin{align*}
   \sigma_x:=\frac{d}{m}-\frac{d}{\rho}\in\vpp{0,\frac{2}{m}}.
  \end{align*}
 Consequently, we may choose $\gamma\in(0,1)$ so large that $\sigma_x\in[0,\frac{m-1}{m}\frac{2}{m-\gamma})$.
 From Corollary \ref{cor:av0} applied with $\mu=1$ and $q=m$ we obtain
 \begin{align*}
  \lefteqn{\norm{\vp\uf^<}_{L^{m}_{t}\WSR{\sigma_x}{m}_x}}\\
  &&\lesssim \||v|^{1-\gamma}g_{0}\|_{\calm_{TV}}+\||v|^{-\gamma}g_{1}\|_{\calm_{TV}} + \|\vp f^<\|_{\LR{1}_{t,x,v}\cap \LR{\infty}_{t,x,v}} + \|\vp \uf^<\|_{\LR{1}_{t,x}\cap \LR{m}_{t}\LR{1}_{x}} \\
  &&\lesssim \||v|^{1-\gamma}g_{0}\|_{\calm_{TV}}+\||v|^{-\gamma}g_{1}\|_{\calm_{TV}} + \|f\|_{\LR{1}_{t,x,v}\cap \LR{\infty}_{t,x,v}} + \sup_{t\in[0,T]}\|\uf(t)\|_{\LR{1}_{x}}.
 \end{align*}
 We note that since trivially $f^<\in \LR{\infty}_{t,x,v}$ with norm bounded by $1$ we have by Theorem \ref{thm:wp-kinetic}
 \begin{align*}
  \norm{f}_{\LR{1}_{t,x,v} \cap \LR{\infty}_{t,x,v}} + \sup_{t\in[0,T]}\|\uf(t)\|_{\LR{1}_{x}}\lesssim  \norm{u}_{\LR{1}_{t,x}}+ 1 + \sup_{t\in[0,T]}\|\uf(t)\|_{\LR{1}_{x}}\lesssim \norm{u_0}_{\LR{1}_x}+ \|S\|_{L^1_{t,x}}+ 1.
 \end{align*}
Next, we check that $|v|^{1-\gamma}g_0 \in \calm_{TV}$. Indeed, since $|v|^{1-\gamma}$ can be estimated by a constant on the supports of $\psi_0$ and $\partial_v\psi_0$, we may apply Lemma \ref{lem:ph-est-1} to the effect of
\begin{align*}
 \||v|^{1-\gamma}g_0\|_{\calm_{TV}}&=\||v|^{1-\gamma}(\vp\psi_0\delta_{v=u(t,x)}S + \partial_t\vp f^< - \vp q\partial_\vf\psi_0)\|_{\calm_{TV}} \\
 &\lesssim \|S\|_{L^1_{t,x}} + \|\partial_t\vp |u|\|_{L^1_{t,x}}+\|q\partial_\vf\psi_0\|_{\calm_{TV}}\\
 &\lesssim \|\partial_t\vp |u|\|_{L^1_{t,x}}+\|u_0\|_{L^1_x} + \|S\|_{L^1_{t,x}}.
\end{align*}
Utilizing Lemma \ref{lem:ph-est-1} once more to the effect of
\begin{align*}
 \||v|^{-\gamma}g_1\|_{\calm_{TV}}\lesssim \||v|^{-\gamma}q^<\|_{\calm_{TV}}\lesssim \|u_0\|_{L^{1}_{x}}+ \|S\|_{L^1_{t,x}},
\end{align*}
we obtain by Sobolev embedding
\begin{align}\label{lem:pme_Lmbound_l1_11}
  \|\vp u^<\|_{\LR{m}_{t}\LR{\rho}_x}\lesssim \norm{\vp\uf^<}_{L^{m}_{t}\WSR{\sigma_x}{m}_x}\lesssim \|u_0\|_{L^1_x} + \|\partial_t\vp |u|\|_{L^1_{t,x}}+ \|S\|_{L^1_{t,x}}+1.
 \end{align}
With the same construction $\vp_n \to 1_{[0,T]}$ as in the proof of Theorem \ref{lem:pme}, this gives \eqref{cor:pme_l1_s1}.
 
 \refstepcounter{cor:pme_l1_prf} 
  \textit{Step} \arabic{cor:pme_l1_prf}\label{cor:pme_l1_prf_st2}. Next, we investigate $u^>$ and establish for $\eta\in(1,m)$ and $\eta^*=\frac{\eta d(m-1)}{d(m-1)-2(\eta-1)}$ the bound
  \begin{align}\label{cor:pme_l1_s2}
  \|u^>\|_{\LR{\eta}_{t}\LR{\eta^*}_x}\lesssim \|u_0\|_{L^1_x} + \|S\|_{L^1_{t,x}} +1.
 \end{align}
  Set $g_0:=\vp\psi_1\delta_{v=u(t,x)}S + \partial_t\vp f^> + \vp q\partial_\vf\psi_0$ and $g_1:=\vp q^>$. 
  Choose $\gamma\in(1,m)$ sufficiently small, so that $\eta\in(1,m+1-\gamma)$, and define
\begin{align*}
 \sigma_x:=\frac{\eta-1}{\eta}\frac{2}{m-1}\in\vpp{0,\frac{\eta-1}{\eta}\frac{2}{m-\gamma}}.
\end{align*}
 We apply Corollary \ref{cor:av0} with $\mu=1$ and $q=\eta$, which gives
\begin{align*}
  \lefteqn{\norm{\vp\uf^>}_{L^{\eta}_{t}\WSR{\sigma_x}{\eta}_x}} \\
   &&\quad \lesssim \||v|^{1-\gamma}g_{0}\|_{\calm_{TV}}+\||v|^{-\gamma}g_{1}\|_{\calm_{TV}} + \|\vp f^>\|_{\LR{1}_{t,x,v}\cap \LR{\infty}_{t,x,v}} + \|\vp \uf^>\|_{\LR{1}_{t,x}\cap \LR{\eta}_{t}\LR{1}_{x}} \\
  &&\lesssim \||v|^{1-\gamma}g_{0}\|_{\calm_{TV}}+\||v|^{-\gamma}g_{1}\|_{\calm_{TV}} + \|f\|_{\LR{1}_{t,x,v}\cap \LR{\infty}_{t,x,v}} + \sup_{t\in[0,T]}\|\uf(t)\|_{\LR{1}_{x}}.
 \end{align*}
 The terms involving $f$ and $u$ are estimated as in Step \ref{cor:pme_l1_prf_st1}. Further, since $|v|^{1-\gamma}$ can be estimated by a constant on the support of $\psi_1$ and $\partial_v\psi_0$, we have by Lemma \ref{lem:ph-est-1}
 \begin{align*}
\||v|^{1-\gamma}g_0\|_{\calm_{TV}}&=\||v|^{1-\gamma}(\vp\psi_1\delta_{v=u(t,x)}S + \partial_t\vp f^> + \vp q\partial_\vf\psi_0)\|_{\calm_{TV}} \\
&\lesssim \|S\|_{L^1_{t,x}} +\|\partial_t\vp |u|\|_{L^1_{t,x}}+\|q\partial_\vf\psi_0\|_{\calm_{TV}}\\
 &\lesssim \|\partial_t\vp |u|\|_{L^1_{t,x}}+\|u_0\|_{L^1_x}+ \|S\|_{L^1_{t,x}},
\end{align*}
and, again due to Lemma \ref{lem:ph-est-1},
\begin{align*}
 \||v|^{-\gamma}g_1\|_{\calm_{TV}}\lesssim \||v|^{-\gamma}q^>\|_{\calm_{TV}}\lesssim \|u_0\|_{L^{1}_{x}} + \|S\|_{L^1_{t,x}}.
\end{align*}
Since $\eta^*=\frac{\eta d}{d-\sigma_x\eta}$, we have by Sobolev embedding $\WSR{\sigma_x}{\eta}_x\subset L^{\eta^*}_x$, and hence
\begin{align*}
  \|\vp u^>\|_{\LR{\eta}_{t}\LR{\eta^*}_x}\lesssim \|\vp u^>\|_{L^{\eta}_{t}\WSR{\sigma_x}{\eta}_x}\lesssim \|u_0\|_{L^1_x} + \|\partial_t\vp |u|\|_{L^1_{t,x}}+ \|S\|_{L^1_{t,x}}+1.
 \end{align*}
With the same construction $\vp_n \to 1_{[0,T]}$ as before, this yields \eqref{cor:pme_l1_s2}.

 \refstepcounter{cor:pme_l1_prf} 
  \textit{Step} \arabic{cor:pme_l1_prf}\label{cor:pme_l1_prf_st3}. In this step, we show that for $s\in [1,m+\frac{2}{d})$ we have
  \begin{align}\label{lem:pme_Lmbound_l1}
  \|u\|_{\LR{s}_{t,x}}\lesssim  \|u_0\|_{L^1_x} + \|S\|_{L^1_{t,x}}+1.
 \end{align}
  Observe that it suffices to show the assertion for $s>m$, since $u\in \LR{1}(0,T;\LR{1}(\R^d))$ is already established by Theorem \ref{thm:wp-kinetic}.
  
  Define $\rho:=\frac{m}{m+1-s}\in(m,\frac{md}{d-2})$. For $\vt\in(0,1)$, it holds $[\LR{\infty}_t\LR{1}_x,\LR{m}_t\LR{\rho}_x]_\vt=\LR{p_\vt}_t\LR{q_\vt}_x$ with
  \begin{align*}
   \frac{1}{p_\vt}=\frac{\vt}{m} \qquad \tand \quad \frac{1}{q_\vt}=1-\vt+\frac{\vt}{\rho}.
  \end{align*}
  Choosing $\vt:=\frac{m\rho}{m\rho +\rho-m}\in(0,1)$, we obtain $p_\vt=q_\vt=s$, and hence by \eqref{cor:pme_l1_s1} and Theorem \ref{thm:wp-kinetic}
  \begin{align}\label{lem:pme_Lmbound_l1_1}
   \|u^<\|_{\LR{s}_{t,x}}\lesssim \|u^<\|_{\LR{\infty}_{t}\LR{1}_x}+\|u^<\|_{\LR{m}_{t}\LR{\rho}_x}\lesssim  \|u_0\|_{L^1_x} + 1.
  \end{align}
  Next, we define
  \begin{align*}
   \eta:=\frac{sd(m-1)+2}{d(m-1)+2}\in(1,m) \qquad \tand \quad \eta^*=\frac{\eta d}{d-2\frac{\eta-1}{m-1}}
  \end{align*}
  and observe that for $\vt\in(0,1)$, it holds $[\LR{\infty}_t\LR{1}_x,\LR{\eta}_t\LR{\eta^*}_x]_\vt = \LR{p_\vt}_t\LR{q_\vt}_x$ with
  \begin{align*}
   \frac{1}{p_\vt}=\frac{\vt}{\eta} \qquad \tand \quad \frac{1}{q_\vt}=1-\vt+\frac{\vt}{\eta^*}.
  \end{align*}
  Choosing $\vt:=\frac{\eta d(m-1)}{\eta d(m-1) + 2(\eta-1)}\in(0,1)$, we obtain $p_\vt=q_\vt=s$, and hence by \eqref{cor:pme_l1_s2} and Theorem \ref{thm:wp-kinetic}
  \begin{align}\label{lem:pme_Lmbound_l1_2}
  \|u^>\|_{\LR{s}_{t,x}}\lesssim \|u^>\|_{\LR{\infty}_{t}\LR{1}_x}+\|u^>\|_{\LR{\eta}_{t}\LR{\eta^*}_x}\lesssim \|u_0\|_{L^1_x} + \|S\|_{L^1_{t,x}}+1.
 \end{align}
  Combining \eqref{lem:pme_Lmbound_l1_1} and \eqref{lem:pme_Lmbound_l1_2}, we obtain \eqref{lem:pme_Lmbound_l1}.

  \refstepcounter{cor:pme_l1_prf} 
  \textit{Step} \arabic{cor:pme_l1_prf}\label{cor:pme_l1_prf_st4}. In this step we argue that
  \begin{align*}
  \norm{\vp u^<}_{\WSR{\sigma_t}{p}(\WSR{\sigma_x}{p})}&\lesssim \|\partial_t\vp |u|\|_{L^1_{t,x}} + \|u_0\|_{L^1_x}^m + \|S\|_{L^1_{t,x}}^m+1.
 \end{align*}
  Indeed, we choose $\gamma\in(0,1)$ so large that $\sigma_x<\frac{p-2+\gamma}{p}\frac{2}{m-1}$ and $m+1-\gamma<m+\frac{2}{d}$. Then we apply Corollary \ref{cor:av3} with $g_0:=\vp\psi_0\delta_{v=u(t,x)}S + \partial_t\vp f^< - \vp q \partial_v \psi_0$, $g_1:=\vp q^<$ and $\td p=p$. We obtain by \eqref{cor:av3_main_est} some $r\in (p,m+1-\gamma)$ such that
  \begin{align*}
  \norm{\vp u^<}_{\WSR{\sigma_t}{p}(\WSR{\sigma_x}{p})}&\lesssim \|g_{0}\|_{\calm_{TV}} + \||v|^{1-\gamma}g_{0}\|_{\calm_{TV}}+\||v|^{-\gamma}g_{1}\|_{\calm_{TV}} + \|f\|_{\LR{1}_{t,x,v}\cap \LR{\infty}_{t,x,v}} \\
  &\quad + \|u\|_{\LR{1}_t\LR{p}_x\cap \LR{r}_{t,x}} + \||u|^m\|_{\LR{1}_{t,x}}.
 \end{align*}
  The first four terms on the right-hand side can be estimated as in Step \ref{cor:pme_l1_prf_st1} (indeed, we did not use the coefficient $|v|^{1-\gamma}$ in the estimate of $g_0$) via
  \begin{align*}
   \|g_{0}\|_{\calm_{TV}} + \||v|^{1-\gamma}g_{0}\|_{\calm_{TV}}&+\||v|^{-\gamma}g_{1}\|_{\calm_{TV}} + \|f\|_{\LR{1}_{t,x,v}\cap \LR{\infty}_{t,x,v}} \\
   &\lesssim \|\partial_t\vp |u|\|_{L^1_{t,x}} + \|u_0\|_{L^1_x} + \|S\|_{L^1_{t,x}}+1,
  \end{align*}
  while the last two terms are estimated in virtue of $r<m+1-\gamma<m+\frac{2}{d}$ through \eqref{lem:pme_Lmbound_l1} as
  \begin{align*}
   \|u\|_{\LR{1}_t\LR{p}_x\cap \LR{r}_{t,x}} + \||u|^m\|_{\LR{1}_{t,x}}\lesssim \|u\|_{\LR{p}_{t,x}\cap \LR{r}_{t,x}} + \|u\|_{\LR{m}_{t,x}}^m\lesssim \|u_0\|_{L^1_x}^m + \|S\|_{L^1_{t,x}}^m+1.
  \end{align*}
  
  \refstepcounter{cor:pme_l1_prf} 
  \textit{Step} \arabic{cor:pme_l1_prf}\label{cor:pme_l1_prf_st5}.
  In this step we establish
  \begin{align}\label{cor:pme_l1_prf_s5}
  \norm{\vp u^>}_{\WSR{\sigma_t}{p}(\WSR{\sigma_x}{p})}\lesssim \|\partial_t\vp |u|\|_{L^1_{t,x}} + \|u_0\|_{L^1_x}^m + \|S\|_{L^1_{t,x}}^m +1.
  \end{align}
  Assume first $p<m$. Choose $\gamma\in(1,m)$ so small that $p\in(1,m+1-\gamma)$ and $\sigma_t<\frac{m+1-\gamma-p}{p}\frac{1}{m-1}$ and apply Corollary \ref{cor:av3} with $g_0:=\vp\psi_1\delta_{v=u(t,x)}S + \partial_t\vp f^> + \vp q \partial_v \psi_0$, $g_1:=\vp q^>$ and $\td p=p$. Estimate \eqref{cor:av3_main_est} gives
  \begin{align*}
  \norm{\vp u^>}_{\WSR{\sigma_t}{p}(\WSR{\sigma_x}{p})}&\lesssim \|g_{0}\|_{\calm_{TV}} + \||v|^{1-\gamma}g_{0}\|_{\calm_{TV}}+\||v|^{-\gamma}g_{1}\|_{\calm_{TV}} \\
  & \quad + \|f\|_{\LR{1}_{t,x,v}\cap \LR{\infty}_{t,x,v}} 
   +  \|u\|_{\LR{1}_t\LR{p}_x\cap \LR{r}_{t,x}} + \||u|^m\|_{\LR{1}_{t,x}}.
 \end{align*}
  The first four terms on the right-hand side are estimated as in Step \ref{cor:pme_l1_prf_st2} via
  \begin{align*}
   \|g_{0}\|_{\calm_{TV}} + \||v|^{1-\gamma}g_{0}\|_{\calm_{TV}} &+\||v|^{-\gamma}g_{1}\|_{\calm_{TV}} + \|f\|_{\LR{1}_{t,x,v}\cap \LR{\infty}_{t,x,v}} \\
   &\lesssim \|\partial_t\vp |u|\|_{L^1_{t,x}} + \|u_0\|_{L^1_x} + \|S\|_{L^1_{t,x}}+1,
  \end{align*}
  while the last two terms are estimated through \eqref{lem:pme_Lmbound_l1} as
  \begin{align*}
   \|u\|_{\LR{1}_t\LR{p}_x\cap \LR{r}_{t,x}} + \||u|^m\|_{\LR{1}_{t,x}}\lesssim \|u\|_{\LR{p}_{t,x}\cap \LR{r}_{t,x}} + \|u\|_{\LR{m}_{t,x}}^m\lesssim \|u_0\|_{L^1_x}^m + \|S\|_{L^1_{t,x}}^m+1.
  \end{align*}
  Hence, we have shown \eqref{cor:pme_l1_prf_s5} in the case $p\in(1,m)$. If $p=m$, we choose $p_0\in(1,m)$ sufficiently large such that for $\kappa_x(p_0):=\frac{p_0-1}{p_0}\frac{2}{m-1}$ it holds $\kappa_x(p_0)-\frac{d}{p_0}>\sigma_x-\frac{d}{m}$. We observe that for $\kappa_t(p_0):=\frac{m-p_0}{p_0}\frac{1}{m-1}$ it holds $\kappa_t(p_0)-\frac{1}{p_0}>\sigma_t-\frac{1}{m}$ due to $p_0<m$ (indeed, we have necessarily $\sigma_t=0$). Choosing sufficiently large $\sigma_x(p_0)<\kappa_x(p_0)$ and $\sigma_t(p_0)<\kappa_t(p_0)$, we conclude by Sobolev embedding
  \begin{align*}
   \norm{\vp u^>}_{\LR{m}_t(\WSR{\sigma_x}{m}_x)}&\lesssim \norm{\vp u^>}_{\WSR{\sigma_t(p_0)}{p_0}(\WSR{\sigma_x(p_0)}{p_0})}\\
   &\lesssim \|\partial_t\vp |u|\|_{L^1_{t,x}} + \|u_0\|_{L^1_x}^m + \|S\|_{L^1_{t,x}}^m +1,
  \end{align*}
  which is \eqref{cor:pme_l1_prf_s5} in the case $p=m$.
  
  \refstepcounter{cor:pme_l1_prf} 
  \textit{Step} \arabic{cor:pme_l1_prf}\label{cor:pme_l1_prf_st6}. Conclusion.
  With the same construction $\vp_n \to 1_{[0,T]}$ as in the proof of Theorem \ref{lem:pme}, Steps \ref{cor:pme_l1_prf_st4} and \ref{cor:pme_l1_prf_st5} combine to
  \begin{align*}
   \sup_{n\in\N}\norm{\vp_n u}_{\WSR{\sigma_t}{p}(\WSR{\sigma_x}{p})}&\lesssim \sup_{n\in\N}\norm{\vp_n u^<}_{\WSR{\sigma_t}{p}(\WSR{\sigma_x}{p})} + \sup_{n\in\N} \norm{\vp_n u^>}_{\WSR{\sigma_t}{p}(\WSR{\sigma_x}{p})} \\
   &\lesssim \|u_0\|_{L^1_x}^m + \|S\|_{L^1_{t,x}}^m +1.
  \end{align*}
  Since $\vp_n u\to u 1_{[0,T]}$ in the sense of distributions, we obtain \eqref{cor:pme_est1_l1} by the weak lower semi-continuity of the norm in $\WSR{\sigma_t}{p}(0,T;\WSR{\sigma_x}{p}(\R^d))$. 
  Estimate \eqref{cor:pme_est2_l1} follows analogously to the proof of Corollary \ref{cor:pme} (\ref{cor:pme_2}). \qedhere
\end{proof}

\appendix

\section{\label{app:kin_solutions}Kinetic Solutions}

In this section we recall some details on the concept of entropy / kinetic solutions and their well-posedness for partial differential equations of the type
\begin{align}
\partial_{t}u+\div A(u) & =\div(b(u)\nabla u)+S(t,x)\quad\text{on }(0,T)\times\R_{x}^{d}\label{eq:par-hyp}\\
u(0) & =u_{0}\quad\text{on }\R_{x}^{d},\nonumber 
\end{align}
where
\begin{align}
u_{0} & \in L^{1}(\R_{x}^{d}),\,S\in L^{1}([0,T]\times\R_{x}^{d}),\,T\ge0,\nonumber \\
a:=A' & \in C(\R;\R^{d})\cap C^{1}(\R\setminus\{0\};\R^{d}),\label{eq:ph-as}\\
b=(b_{jk})_{j,k=1\dots d} & \in C(\R;S_{+}^{d\times d})\cap C^{1}(\R\setminus\{0\};S_{+}^{d\times d}).\nonumber 
\end{align}
Here, $S_{+}^{d\times d}$ denote the space of symmetric, non-negative definite matrices. For $b=(b)_{i,j=1\dots d}\in S_{+}^{d\times d}$ we set $\s=b^{\frac{1}{2}}$, that is, $b_{i,j}=\sum_{k=1}^{d}\sigma_{i,k}\s_{k,j}$. For a locally bounded function $b:\R\to S_{+}^{d\times d}$ we let $\b_{i,k}$ be such that $\b_{i,k}'(v)=\sigma_{i,k}(v)$. Similarly, for $\psi\in C_{c}^{\infty}(\R_{v})$ we let $\b_{i,j}^{\psi}$ be such that $(\b_{i,k}^{\psi})'(v)=\psi(v)\sigma_{i,k}(v)$.
The corresponding kinetic form of \eqref{eq:par-hyp} reads (cf.~\cite{ChP03})  
\begin{align}
\call(\partial_{t},\nabla_{x},v)f(t,x,v) & =\partial_{t}f+a(v)\cdot\nabla_{x}f-\div(b(v)\nabla_{x}f)\label{eq:kinetic_ph-3}\\
 & =\partial_{v}q+S(t,x)\d_{u(t,x)=v}(v),\nonumber 
\end{align}
where $q\in\calm^{+}$ and $\call$ is identified with the symbol 
\begin{equation}
\call(i\tau,i\xi,v):=i\tau+a(v)\cdot i\xi-(b(v)\xi,\xi).\label{eq:ph-symbol}
\end{equation}
We will use the terms kinetic and entropy solution synonymously. From \cite{ChP03} we recall the definition of entropy/kinetic solutions to \eqref{eq:par-hyp}.
\begin{defn}
\label{def:kinetic_sol-1}We say that $u\in C([0,T];L^{1}(\R^{d}))$ is an entropy solution to \eqref{eq:par-hyp} if the corresponding kinetic function $f$ satisfies

\begin{enumerate}
\item For any non-negative $\psi\in\cald(\R)$, $k=1,\dots,d$,
\[
\sum_{i=1}^{d}\partial_{x_{i}}\b_{ik}^{\psi}(u)\in L^{2}([0,T]\times\R^{d}).
\]
\item For any two non-negative functions $\psi_{1},\psi_{2}\in\cald(\R)$, $k=1,\dots,d$,
\[
\sqrt{\psi_{1}(u(t,x))}\sum_{i=1}^{d}\partial_{x_{i}}\b_{ik}^{\psi_{2}}(u(t,x))=\sum_{i=1}^{d}\partial_{x_{i}}\b_{ik}^{\psi_{1}\psi_{2}}(u(t,x))\quad\text{a.e.}.
\]
\item There are non-negative measures $m,n\in\calm^{+}$ such that, in the sense of distributions,
\[
\partial_{t}f+a(v)\cdot\nabla_{x}f-\div(b(v)\nabla_{x}f)=\partial_{v}(m+n)+\delta_{v=u(t,x)}S\quad\text{on }(0,T)\times\R_{x}^{d}\times\R_{v}
\]
where $n$ is defined by
\[
\int\psi(v)n(t,x,v)\dd v=\sum_{k=1}^{d}\left(\sum_{i=1}^{d}\partial_{x_{i}}\b_{ik}^{\psi}(u(t,x))\right)^{2}
\]
for any $\psi\in\cald(\R)$ with $\psi\ge0.$
\item We have
\[
\int(m+n)\dd x \dd t\le\mu(v)\in L_{0}^{\infty}(\R),
\]
where $L_{0}^{\infty}$ is the space of $L^{\infty}$-functions vanishing for $|v|\to\infty$.
\end{enumerate}
\end{defn}

The well-posedness of entropy solutions to \eqref{eq:par-hyp} follows along the same lines of \cite{ChP03}. In this form, it can be found in \cite{Ges17}.
\begin{thm}
\label{thm:wp-kinetic}Let $u_{0}\in L^{1}(\R^{d})$, $S\in L^{1}([0,T]\times\R^{d}).$ Then there is a unique entropy solution $u$ to \eqref{eq:par-hyp} satisfying $u\in C([0,T];L^{1}(\R^{d}))$. For two entropy solutions $u^{1}$, $u^{2}$ with initial conditions $u_{0}^{1},u_{0}^{2}$ and forcing $S^{1},S^{2}$ we have 
\[
\sup_{t\in[0,T]}\|u^{1}(t)-u^{2}(t)\|_{L^{1}(\R^{d})}\le\|u_{0}^{1}-u_{0}^{2}\|_{L^{1}(\R^{d})}+\|S^{1}-S^{2}\|_{L^{1}([0,T]\times\R^{d})}.
\]
\end{thm}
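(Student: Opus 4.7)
Since the conclusion is precisely the Chen--Perthame well-posedness theorem adapted to the present class of parabolic-hyperbolic equations and recalled in Definition~\ref{def:kinetic_sol-1}, the plan is to follow the scheme of \cite{ChP03} verbatim, with only minor bookkeeping for the forcing $S$. There are two independent things to prove: the $L^{1}$-contraction estimate (which immediately yields uniqueness), and the existence of an entropy solution in $C([0,T];L^{1}(\R^{d}))$.

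\textbf{Step 1: Contraction and uniqueness via doubling of variables.} Given two entropy solutions $u^{1},u^{2}$ with kinetic functions $f^{i}(t,x,v)=1_{v<u^{i}(t,x)}-1_{v<0}$, I would form the \emph{micro-Kato} quantity
\[
\int_{\R^{d}\times\R}\bigl(f^{1}(t,x,v)(1-f^{2}(t,x,v))+f^{2}(t,x,v)(1-f^{1}(t,x,v))\bigr)\dd v\dd x,
\]
which equals $\|u^{1}(t)-u^{2}(t)\|_{L^{1}(\R^{d})}$. Differentiating in time using the kinetic equations \eqref{eq:kinetic_ph-3} for $f^{1}$ and $f^{2}$, the transport term $a(v)\cdot\nabla_{x}$ produces a perfect divergence that integrates to $0$; the source contributes exactly $\|S^{1}-S^{2}\|_{L^{1}}$ after integration in $v$; the two parabolic measures $n^{i}$ produce non-negative dissipation contributions; and the critical step is to show that the cross term coming from $\div(b(v)\nabla_{x}f^{i})$ is dominated by the sum of these dissipations. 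This is exactly where the chain-rule identity (ii) in Definition~\ref{def:kinetic_sol-1} enters: it identifies the boundary contribution as a square, so that the cross term equals minus a full square, and the total contribution from the parabolic part is non-positive. Integrating in $t$ gives the contraction estimate and, in particular, uniqueness.

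\textbf{Step 2: Existence via vanishing viscosity.} For existence, I would regularize the equation by replacing $b(u)$ with $b(u)+\eps I$ and smoothing $u_{0},S,a,b$ to obtain smooth approximate solutions $u^{\eps}\in C^{\infty}$, which exist classically. The contraction estimate proven in Step~1 applies verbatim at the approximate level, yielding uniform bounds $\sup_{t\in[0,T]}\|u^{\eps}(t)\|_{L^{1}}\le \|u_{0}\|_{L^{1}}+\|S\|_{L^{1}_{t,x}}$ and equicontinuity in time. The parabolic energy identity gives a uniform bound on the defect measure $n^{\eps}+m^{\eps}$ as in condition (iv) of Definition~\ref{def:kinetic_sol-1}. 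Extracting weak-$\ast$ limits of $f^{\eps}$ and of the measures, and using velocity averaging (Lemma~\ref{lem:av} provides strong compactness of the macroscopic profile), I would pass to the limit in each term of the kinetic equation; the identity in (ii) is preserved in the limit thanks to the weak lower semicontinuity of the parabolic square. The continuity $u\in C([0,T];L^{1}(\R^{d}))$ follows from equicontinuity and the contraction estimate.

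\textbf{Main obstacle.} The delicate point is the contraction step: the degenerate diffusion $\div(b(v)\nabla_{x}f)$ is not regular enough to be handled termwise, and one must use the chain-rule structure of the entropy formulation (point (ii) in Definition~\ref{def:kinetic_sol-1}), expressing the formally indefinite cross-term $-2\int\sum_{k}\bigl(\sum_{i}\partial_{x_{i}}\beta^{\psi_{1}}_{ik}(u^{1})\bigr)\bigl(\sum_{i}\partial_{x_{i}}\beta^{\psi_{2}}_{ik}(u^{2})\bigr)\dd x\dd v$ as the pairing that combines with $n^{1}+n^{2}$ into a non-positive square. All the standard care regarding truncations $\psi_{1},\psi_{2}\in\mathcal D(\R)$ and removal of the truncations at the end (using the localization (iv) of the measures and the $L^{1}$-integrability of $u^{i}$) is carried out as in \cite{ChP03}.
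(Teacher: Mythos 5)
The paper does not actually prove this theorem: it is quoted from the literature, with the remark that well-posedness ``follows along the same lines of \cite{ChP03}'' and is stated in this form in \cite{Ges17}, so there is no in-paper proof to compare against. Your outline reproduces exactly the Chen--Perthame scheme (doubling/micro-Kato contraction built on the chain-rule identity (ii) of Definition~\ref{def:kinetic_sol-1}, then vanishing viscosity combined with the contraction estimate for existence), which is precisely what the cited references carry out; the one slip worth fixing is that with the paper's normalization $f=1_{v<u}-1_{v<0}$ the quantity $f^{1}(1-f^{2})+f^{2}(1-f^{1})$ integrates to $\|u^{1}-u^{2}\|_{L^{1}}$ only when $u^{1},u^{2}$ have the same sign, so the micro-Kato functional must be formed from the positive and negative parts $f_{\pm}$ separately, as in \cite{ChP03}.
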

Furthermore, the following \emph{a priori} estimate was given in Lemma 2.3 in \cite{Ges17}.
\begin{lem}
\label{lem:ph-est}Let $u$ be the unique entropy solution to \eqref{eq:par-hyp} with $u_{0}\in(L^{1}\cap L^{2-\gamma})(\R_{x}^{d})$, $S\in(L^{1}\cap L^{2-\gamma})([0,T]\times\R_{x}^{d})$ for some $\gamma\in(-\infty,1)$. Then, there is a constant $C=C(T,\g)\ge0$ such that
\[
 \sup_{t\in[0,T]}\|u(t)\|_{L_{x}^{2-\gamma}}^{2-\gamma}+(1-\gamma)\int_{0}^{T}\int_{\R^{d+1}}|v|^{-\gamma}q\,\dd v \dd x \dd r
 \le C\big(\|u_{0}\|_{L_{x}^{2-\gamma}}^{2-\gamma}+\|S\|_{L_{t,x}^{2-\gamma}}^{2-\gamma}\big).
\]
\end{lem}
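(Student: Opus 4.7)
The strategy is to test the kinetic equation against a suitable velocity-only test function that recovers the $L^{2-\gamma}$-norm of $u$ and simultaneously generates the singular velocity moment of $q$. Specifically, the natural candidate is $\vp(v):=|v|^{1-\gamma}\sgn(v)$, since a direct computation using $f(t,x,v)=\mathbf{1}_{v<u}-\mathbf{1}_{v<0}$ yields
\[
\int_{\R}\vp(v)f(t,x,v)\dd v=\frac{|u(t,x)|^{2-\gamma}}{2-\gamma},\qquad \vp'(v)=(1-\gamma)|v|^{-\gamma},
\]
so $\int f\,\vp\,\dd v$ reproduces the $L^{2-\gamma}$-mass and the integration-by-parts pairing of $\partial_v q$ with $\vp$ produces precisely $-(1-\gamma)\int q|v|^{-\gamma}\dd v$.

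The plan is to first test the kinetic equation \eqref{eq:kinetic_ph-3} against $\chi(t)\vp_\eps(v)$, where $\chi\in \cald(\R_t)$ is a smooth approximation of $\mathbf{1}_{[0,t_0]}$ for fixed $t_0\in[0,T]$ and $\vp_\eps\in\cald(\R_v)$ approximates $\vp$ by truncating on $\{|v|\ge\eps\}\cap\{|v|\le 1/\eps\}$ and smoothing. The transport term $a(v)\cdot\nabla_x f$ and the diffusive term $\div(b(v)\nabla_x f)$ vanish after integration in $x$ because each is a spatial divergence applied to a velocity multiplier and $f(t,\cdot,v)$ decays at spatial infinity for $v\neq 0$; note that the dissipation contribution $n$ on the right-hand side of \eqref{eq:kinetic_ph-3} is absorbed together with $m$ in the measure $q=m+n$, whose non-negativity is crucial in what follows. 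After integration by parts in $t$ and $v$, this produces the identity
\[
\frac{1}{2-\gamma}\int \chi'(t)\int|u(t,x)|^{2-\gamma}\dd x\,\dd t+\int\chi\int q\,\vp_\eps'\dd v\dd x\dd t=\int\chi\int\vp_\eps(u)S\,\dd x\,\dd t.
\]

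Next, I pass to the limit $\eps\to 0$. For the $q$-term, since $q\ge 0$ and $\vp_\eps'\nearrow (1-\gamma)|v|^{-\gamma}$ monotonically off a set of zero measure, the monotone convergence theorem delivers the desired singular moment; the convergence of $\vp_\eps(u)S$ and $\vp_\eps(u)$ is justified by dominated convergence using $|\vp_\eps(u)|\le|u|^{1-\gamma}$ together with the a priori bounds $u\in C([0,T];L^1(\R^d))$ and a preliminary $L^\infty_t L^{2-\gamma}_x$ bound obtained for truncated data (one can first assume $u_0,S\in L^\infty$, derive the estimate, and then pass to the limit using the $L^1$-contraction from Theorem \ref{thm:wp-kinetic} together with the lower semicontinuity of $\|\cdot\|_{L^{2-\gamma}}$ and the weak convergence of the measures $q$). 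Choosing $\chi\nearrow\mathbf{1}_{[0,t_0]}$ and applying Young's inequality in the form
\[
\int|u|^{1-\gamma}|S|\dd x\le\frac{1-\gamma}{2-\gamma}\int|u|^{2-\gamma}\dd x+\frac{1}{2-\gamma}\int|S|^{2-\gamma}\dd x
\]
to the source term produces a Grönwall-type inequality for $\|u(t_0)\|_{L^{2-\gamma}}^{2-\gamma}$, which, together with the $q$-term, delivers the asserted bound upon taking the supremum in $t_0\in[0,T]$.

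The main obstacle will be the rigorous justification of the test-function argument, since $\vp$ is merely Hölder near $v=0$ (with singular derivative when $\gamma\in(0,1)$) and unbounded at infinity (when $\gamma<0$), neither of which lies in $\cald(\R_v)$. This is where the truncation–regularization procedure and the approximation by smooth initial data become essential; the positivity of the defect measure $q$ makes the monotone passage through $\vp_\eps'$ safe, and the $L^\infty_tL^1_x$-bound from Theorem \ref{thm:wp-kinetic} rules out any boundary contribution at $|v|=\infty$. Everything else is standard computation.
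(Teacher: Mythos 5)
Your argument is correct and is essentially the proof the paper relies on: the paper does not prove this lemma itself but cites Lemma~2.3 of \cite{Ges17}, and that proof (like the paper's own proof of the $L^1$-analogue, Lemma~\ref{lem:ph-est-1}) is exactly your scheme of testing the kinetic formulation \eqref{eq:kinetic_ph-3} with smooth, suitably truncated approximations of $|v|^{1-\gamma}\sgn(v)$, using the non-negativity of $q$ to pass to the limit in the singular moment and Young plus Gr\"onwall for the source term. Only note the sign in your displayed identity: integrating $\partial_t f$ by parts gives $-\tfrac{1}{2-\gamma}\int\chi'(t)\int|u|^{2-\gamma}\dd x\,\dd t$ on the left (so that $\chi\nearrow\mathbf{1}_{[0,t_0]}$ produces $+\|u(t_0)\|_{L^{2-\gamma}}^{2-\gamma}-\|u_0\|_{L^{2-\gamma}}^{2-\gamma}$), which is what your subsequent Gr\"onwall step implicitly uses.
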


In the case of $L^{1}$ initial data a different proof for the existence of singular moments of the kinetic measure $q$ is needed.

\begin{lem} \label{lem:ph-est-1}Let $u$ be the unique entropy solution to \eqref{eq:par-hyp} with $u_{0}\in L^{1}(\R_{x}^{d})$, $S\in L^{1}([0,T]\times\R_{x}^{d})$. Then, the map $v\to\int_{0}^{T}\int_{\R^{d}_x}q(r,x,v)\,\dd x\dd r$ is continuous and, for all $v_{0}\in\R_{v}$, we have
\begin{equation}
\begin{split}
\int_{0}^{T}\int_{\R_{x}^{d}}q(r,x,v_{0})\,\dd x\dd r
&\le\int_{\R_{x}^{d}}(\sgn(v_{0})(u_{0}-v_{0}))_{+}\,\dd x \\
& \ \ \ +\int_{0}^{T}\int_{\R_{x}^{d}}\sgn_{+}(\sgn(v_{0})(u-v_{0}))S\,\dd x\dd r\\
&\le\int_{\R_{x}^{d}}|u_{0}|\,\dd x+\int_{0}^{T}\int_{\R_{x}^{d}}|S|\,\dd x\dd r.
\end{split}
\end{equation}
\end{lem}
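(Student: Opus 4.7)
The plan is to test the kinetic formulation \eqref{eq:kinetic_ph-3} against the derivative of the Lipschitz entropy
\[
G_{v_0}(w) := (\sgn(v_0)(w - v_0))_+,
\]
namely $\psi_{v_0}(v) := G_{v_0}'(v) = \sgn(v_0)\mathbf{1}_{\sgn(v_0)(v - v_0) > 0}$. The choice is designed so that $\partial_v \psi_{v_0} = \delta_{v_0}$ in both sign regimes, so that pairing with $\partial_v q$ extracts the slice of the kinetic measure at $\{v = v_0\}$; moreover $G_{v_0}(0) = 0$ gives $\int_{\R_v} \psi_{v_0}(v) f(t,x,v)\, dv = G_{v_0}(u(t,x)) \ge 0$. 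Formally multiplying \eqref{eq:kinetic_ph-3} by $\psi_{v_0}(v)$ and integrating over $\R_v \times [0,T] \times \R^d_x$, the flux and diffusion contributions become $x$-divergences (as $a(v), b(v)$ commute with integration in $v$) and vanish on $x$-integration, yielding
\[
\int_{\R^d}\!\bigl[G_{v_0}(u(T)) - G_{v_0}(u_0)\bigr]\,dx + \int_0^T\!\!\int_{\R^d} q(t,x,v_0)\,dx\,dt = \int_0^T\!\!\int_{\R^d} \psi_{v_0}(u)\,S\,dx\,dt.
\]
Dropping the non-negative term $\int G_{v_0}(u(T))\,dx$ and using $|\psi_{v_0}(u)| = \sgn_+(\sgn(v_0)(u-v_0))$ gives the first inequality; the second follows from $G_{v_0}(u_0) \le |u_0|$ and $\sgn_+ \le 1$.

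To make this rigorous, replace $\psi_{v_0}$ by its mollification $\psi_{v_0}^\eps := \psi_{v_0} \ast \rho_\eps$, where $\rho_\eps$ is a standard symmetric non-negative mollifier on $\R$. Then $\psi_{v_0}^\eps \in C_b^\infty(\R)$ with $|\psi_{v_0}^\eps|\le 1$ and $(\psi_{v_0}^\eps)'(v) = \rho_\eps(v - v_0)$, while its primitive $G_{v_0}^\eps(w):=\int_0^w \psi_{v_0}^\eps$ is Lipschitz with $|G_{v_0}^\eps(w)| \le |w| + |v_0|$. Testing the distributional kinetic equation against $\psi_{v_0}^\eps$ is admissible and produces the $\eps$-analogue of the above identity, with $\int q(\cdot,\cdot,v_0)$ replaced by $\int_{\R_v} \rho_\eps(v-v_0)\,dQ(v)$, where $Q$ denotes the marginal of $q$ on $\R_v$. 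Dominated convergence (using $u(T), u_0 \in L^1(\R^d)$ by Theorem \ref{thm:wp-kinetic} and $|S| \in L^1$) passes the first and third terms to their limits as $\eps \to 0$. Uniform $\eps$-boundedness of the resulting right-hand side forces $(\rho_\eps \ast Q)(v_0)$ to stay bounded at every $v_0 \in \R$; this rules out atoms of $Q$ (which would make $(\rho_\eps \ast Q)(v_0)$ blow up like $\eps^{-1}$), so $Q$ admits a density $\overline{Q}$, and the resulting bound on $\overline{Q}(v_0)$ is the claimed estimate. Continuity of $v_0 \mapsto \overline{Q}(v_0)$ follows since both $v_0 \mapsto \int G_{v_0}(u_0)\,dx$ and the source term depend continuously on $v_0$ by DCT (using $u_0, S \in L^1$).

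The main obstacle lies in the limit of the source term $\int \psi_{v_0}^\eps(u)\,S$ at exceptional $v_0$ for which the level set $\{u = v_0\}$ has positive $(t,x)$-Lebesgue measure: on this set $\psi_{v_0}^\eps(u)$ converges to $\tfrac{1}{2}\sgn(v_0)$ rather than to $\psi_{v_0}(u) = 0$, so the limit is not given by $\sgn_+(\sgn(v_0)(u-v_0))$ pointwise. This is precisely why the statement of the lemma uses the bound $\sgn_+(\sgn(v_0)(u - v_0))$ (which vanishes on $\{u = v_0\}$) inside an inequality rather than an equality; the uniform bound $|\psi_{v_0}^\eps(u)| \le 1$ together with $|S| \in L^1$ controls the exceptional contribution and yields the inequality in the stated form, after which the coarser bound by $\|u_0\|_{L^1} + \|S\|_{L^1}$ is immediate.
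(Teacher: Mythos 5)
Your proposal is correct and takes essentially the same route as the paper: the paper also tests the (cut-off) kinetic formulation against smooth approximations of the Heaviside function at $v_0$ (there written as $\eta^n=(\psi^n)'$ with $\psi^n$ convex approximations of $(v-v_0)_\pm$), identifies the time-boundary term as $\int(\sgn(v_0)(u-v_0))_+\,\dd x$, drops the nonnegative contribution at $t=T$, and deduces continuity of the $v$-marginal from the same identity. Your extra remarks on the level set $\{u=v_0\}$ and on excluding atoms of the marginal are consistent with, and slightly more explicit than, the paper's argument.
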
\begin{proof}After a standard cut-off argument, the kinetic formulation yields, for every $\eta\in C_{c}^{\infty}(\R_{v})$,
\begin{equation}
\int_{0}^{T}\int_{\R^{d+1}}\partial_{v}\eta q \,\dd v\dd x\dd r=-\int_{\R_{v}}\eta\left(\int_{\R_{x}^{d}}f\,\dd x|_{0}^{T}\right)\dd v+\int_{0}^{T}\int_{\R_{x}^{d}}\eta(u)S \,\dd x\dd r.\label{eq:ctn}
\end{equation}
This first implies that $v\to\int_{0}^{T}\int_{\R^{d}_x}q(r,x,v)\,\dd x\dd r$ has left and right limits, which, again due to \eqref{eq:ctn} have to coincide. Let now $v_{0}\in\R_{+}$. The claim then follows by choosing $\psi^{n}$ to be smooth, non-negative, convex approximations of $(v-v_{0})_{+}$, and $\eta^{n}:=(\psi^{n})'$, which, taking the limit $n\to\infty$, yields
\begin{align*}
\int_{0}^{T}\int_{\R^{d}_x}q(t,x,v_{0})\,\dd x\dd r & =-\int_{\R_{x}^{d}}(u-v_{0})_{+}\,\dd x|_{0}^{T}+\int_{0}^{T}\int_{\R_{x}^{d}}\sgn_{+}(u-v_{0})S\,\dd x\dd r\\
 & \le \int_{\R_{x}^{d}}(u_{0}-v_{0})_{+}\,\dd x+\int_{0}^{T}\int_{\R_{x}^{d}}\sgn_{+}(u-v_{0})S\,\dd x\dd r.
\end{align*}
The case $v_{0}\in\R_{-}$ is treated analogously replacing $(v-v_{0})_{+}$ by $(v-v_{0})_{-}.$
\end{proof}

\section{\label{app:mult}Fourier Multipliers}

In this section, we provide some Fourier multiplier results well-adapted to our Averaging Lemma \ref{lem:av}. We recall the definition of $\dot\R^{d+1}$ and of the functions $\eta_l$ and $\vp_j$ given in Section \ref{FctSpc}, and define $\td\eta_l:=\eta_{l-1}+\eta_l+\eta_{l+1}$ and $\td\vp_j:=\vp_{j-1}+\vp_{j}+\vp_{j+1}$. We observe $\td\eta_l(2^l\cdot)=\td\eta_0$ and $\td\vp(2^j\cdot)=\td\vp_0$. Moreover, $\td\eta_l$ and $\td\vp_j$ are identically unity on the support of $\eta_l$ and $\vp_j$, respectively.

\begin{thm}\label{thm:FM}
 Let $k=2+2[1+d/2]$. Let $m:\dot\R^{d+1}\to \C$ be $k$-times differentiable and such that for all $\alpha=(\alpha_\tau,\alpha_\xi)\in\N_0\times\N_0^{d}$ with $|\alpha|\le k$ there is a constant $C_\alpha$ such that for all $(\tau,\xi)\in \dot\R^{d+1}$
 \begin{align}\label{FM_assump}
  |\partial_\tau^{\alpha_\tau}\partial_\xi^{\alpha_\xi} m(\tau,\xi)|\le C_\alpha|\tau|^{-\alpha_\tau}|\xi|^{-|\alpha_\xi|}.
 \end{align}
 Then there is a constant $C>0$, depending only on the constants $C_\alpha$, such that for any $p\in[1,\infty]$ and all $l,j\in\Z$, we have
 \begin{align}\label{FM_est_1}
  \norm{\td\eta_l\td\vp_jm}_{\calm^{p}}\le C,
 \end{align}
 i.e., $\td\eta_l\td\vp_jm$ (more precisely the mapping $(\tau,\xi)\mapsto \td\eta_l(\tau)\td\vp_j(\xi)m(\tau,\xi)$) extends to an $\LR{p}_{t,x}$-multiplier with a norm independent of $l$ and $j$. Furthermore, this mapping extends to an $\calm_{TV}$-multiplier with the same norm bound.
\end{thm}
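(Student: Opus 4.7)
The plan is to reduce the problem to a single estimate for a compactly supported, uniformly smooth multiplier via an anisotropic dilation, and then pass from $C^k$-bounds to an $L^1$-bound on the (inverse) Fourier transform. Since convolution with an $L^1$-kernel is bounded on $L^p$ for every $p\in[1,\infty]$ as well as on $\calm_{TV}$ with the same norm, this will yield \eqref{FM_est_1} and its $\calm_{TV}$-version simultaneously.

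First, introduce the rescaled multiplier
\[
 M_{l,j}(\tau,\xi):=\td\eta_0(\tau)\td\vp_0(\xi)\,m(2^l\tau,2^j\xi),
\]
so that $\td\eta_l(\tau)\td\vp_j(\xi)m(\tau,\xi)=M_{l,j}(2^{-l}\tau,2^{-j}\xi)$. Since the $\calm^p$- and $\calm_{TV}$-multiplier norms are invariant under anisotropic dilations in $(\tau,\xi)$, it suffices to bound $\|M_{l,j}\|_{\calm^p}$ and the analogous $\calm_{TV}$-multiplier norm uniformly in $l,j\in\Z$.

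Next, I would verify by Leibniz that for every $\alpha=(\alpha_\tau,\alpha_\xi)$ with $|\alpha|\le k$,
\[
 \partial_\tau^{\alpha_\tau}\partial_\xi^{\alpha_\xi}M_{l,j}(\tau,\xi)
 =\sum_{\beta\le\alpha} \binom{\alpha}{\beta}\,
 \partial_\tau^{\beta_\tau}\partial_\xi^{\beta_\xi}[\td\eta_0\td\vp_0](\tau,\xi)\cdot
 2^{l(\alpha_\tau-\beta_\tau)}2^{j|\alpha_\xi-\beta_\xi|}
 (\partial^{\alpha-\beta}m)(2^l\tau,2^j\xi).
\]
On the support of $\td\eta_0\td\vp_0$ we have $|\tau|\sim|\xi|\sim1$, so hypothesis \eqref{FM_assump} applied at $(2^l\tau,2^j\xi)$ supplies exactly the decay $|2^l\tau|^{-(\alpha_\tau-\beta_\tau)}|2^j\xi|^{-|\alpha_\xi-\beta_\xi|}$ that cancels the prefactors $2^{l(\alpha_\tau-\beta_\tau)}2^{j|\alpha_\xi-\beta_\xi|}$. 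Combined with the uniform bounds on the derivatives of the fixed cut-offs $\td\eta_0,\td\vp_0$, this gives $\|M_{l,j}\|_{C^k}\le C$ uniformly in $l,j$, where the constant only depends on the $C_\alpha$, $|\alpha|\le k$.

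Finally, set $K_{l,j}:=\mcF^{-1}_{t,x}M_{l,j}$. Integration by parts yields
\[
 (1+t^2)(1+|x|^2)^{[1+d/2]}|K_{l,j}(t,x)|\lesssim \|M_{l,j}\|_{C^k},
\]
and since $k=2+2[1+d/2]$ was chosen so that $(1+t^2)^{-1}(1+|x|^2)^{-[1+d/2]}\in L^1(\R^{d+1})$ (using $2>1$ and $2[1+d/2]>d$), we conclude $\|K_{l,j}\|_{L^1(\R^{d+1})}\le C$ uniformly. The assertion then follows from Young's inequality: convolution with $K_{l,j}$ is bounded on $L^p(\R^{d+1})$ for every $p\in[1,\infty]$ and on $\calm_{TV}(\R^{d+1})$, with operator norm $\le\|K_{l,j}\|_{L^1}\le C$.

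The main obstacle is the bookkeeping in the second step, where one has to verify that the anisotropic scaling in $\tau$ and $\xi$ is precisely compensated by the mixed derivative bound \eqref{FM_assump} and that the number of derivatives $k=2+2[1+d/2]$ is sufficient to yield an integrable pointwise decay for $K_{l,j}$; once this is in place the rest is standard multiplier theory.
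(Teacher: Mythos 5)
Your proposal is correct and follows essentially the same route as the paper's proof: anisotropic dilation to reduce to the unit-scale multiplier $\td\eta_0\td\vp_0\,m(2^l\cdot,2^j\cdot)$, integration by parts against the weight $(1+t^2)(1+|x|^2)^{[1+d/2]}$ using the scale-invariant derivative bounds \eqref{FM_assump}, and the resulting uniform $L^1$-bound on the kernel to conclude boundedness on $L^p$ and $\calm_{TV}$. The only cosmetic difference is that the paper phrases the last step via $\norm{\cdot}_{\calm^p}\le\norm{\cdot}_{\calm^1}$ and the identification of the $\calm^1$-norm with the total mass of the inverse Fourier transform, which is the same fact as your appeal to Young's inequality.
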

\begin{proof}
Since $\norm{\cdot}_{\calm^p}\le \norm{\cdot}_{\calm^1}$, it suffices to estimate the $L^1$ multiplier norm of $\td\eta_l\td\vp_j m$ in order to obtain \eqref{FM_est_1}. Since multiplier norms are invariant under dilation and since $\|m\|_{\calm^1}$ is equal to the total mass of $\calf^{-1}m$, see \cite[Theorem 6.1.2]{BeL76}, we have
 \begin{align*}
  \norm{\td\eta_l\td\vp_j m}_{\calm^1}=\norm{\td\eta_0\td\vp_0 m_{l,j}}_{\calm^1}=\norm{\calf^{-1}_{t,x}\td\eta_0\td\vp_0 m_{l,j}}_{\LR{1}_{t,x}}, 
 \end{align*}
 where $m_{l,j}(\tau,\xi):=m(2^l\tau,2^j\xi)$.
Let $M:=[1+d/2]$. We observe
 \begin{align*}
  (&1+t^2)(1+|x|^2)^M\calf^{-1}_{t,x}[\td\eta_0\td\vp_0 m_{l,j}](t,x) \\
  &=c_d\int_{\R_t\times\R^d_x} (\id-\partial_\tau^2)(\id-\Delta_\xi)^M\left(e^{it\tau+ix\cdot\xi}\right)\td\eta_0(\tau)\td\vp_0(\xi)m(2^l\tau,2^j\xi)\dd\xi\dd\tau \\
  &=c_d\int_{\R_t\times\R^d_x} e^{it\tau+ix\cdot\xi} (\id-\partial_\tau^2)(\id-\Delta_\xi)^M\left(\td\eta_0(\tau)\td\vp_0(\xi)m(2^l\tau,2^j\xi)\right)\dd\xi\dd\tau \\
  &=\hspace*{-.6cm} \sum_{\stackrel{\alpha_\tau+\beta_\tau\le 2}{|\alpha_\xi|+|\beta_\xi|\le 2M}}
  \hspace*{-.6cm}c_{d,\alpha,\beta}2^{l\beta_\tau}2^{j|\beta_\xi|} 
  \int_{\R_t\times\R^d_x}  \hspace*{-.6cm} e^{it\tau+ix\cdot\xi} \partial_\tau^{\alpha_\tau}\td\eta_0(\tau)\partial_{\xi}^{\alpha_\xi}\td\vp_0(\xi)\partial_\tau^{\beta_\tau}\partial_\xi^{\beta_\xi}m(2^l\tau,2^j\xi)\dd\xi\dd\tau,
 \end{align*}
 where $c_d$ and $c_{d,\alpha,\beta}$ are constants that do not depend on $l$ and $j$. On $\supp \td\eta_0\times \supp \td\vp_0$ we have $|\partial_\tau^{\beta_\tau}\partial_\xi^{\beta_\xi}m(2^l\tau,2^j\xi)|\le C_\beta 2^{-l\beta_\tau}2^{-j|\beta_\xi|}$, and hence we obtain
 \begin{align*}
  \displaystyle (1+t^2)(1+|x|^2)^M|\calf^{-1}_{t,x}[\td\eta_0\td\vp_0 m_{l,j}](t,x)|\le c.
 \end{align*}
 Since $2M>d$, it follows $\norm{\calf^{-1}_{t,x}[\td\eta_0\td\vp_0 m_{l,j}]}_{\LR{1}_{t,x}}\le C$, which yields \eqref{FM_est_1}. In particular, $\td\eta_l\td\vp_j m$ is an $L^1$-multiplier with a norm bound independent of $l$ and $j$, and as such extends to a multiplier on $\calm_{TV}$ with the same norm bound.
\end{proof}

\begin{rem}
 In Theorem \ref{thm:FM}, the assumptions on the differentiability of $m$ may be relaxed: Indeed, the proof shows that it suffices to assume that $m$ is a continuous function such that $\partial_\tau^{\alpha_\tau}m$, $\partial_\xi^{\alpha_x}m$ and $\partial_\tau^{\alpha_\tau}\partial_\xi^{\alpha_x}m$ exist for all $\alpha=(\alpha_\tau,\alpha_\xi)$ with $\alpha_\tau\le 2$ and $|\alpha_\xi|\le 2[1+d/2]$, and that \eqref{FM_assump} holds for these choices of $\alpha$.
\end{rem}

\begin{rem}\label{rem:FM}
 Clearly, Theorem \ref{thm:FM} has an isotropic variant, cf.\@ \cite[Lemma 2.2]{BCD11}. More precisely, a simple adaptation of the proof shows the following: Let $k=2[1+d/2]$. Let $m:\R^{d}\setminus\set{0}\to \C$ be $k$-times differentiable and such that for all $\alpha\in\N_0^{d}$ with $|\alpha|\le k$ there is a constant $C_\alpha$ such that for all $\xi\in \R^{d}\setminus\set{0}$ we have $|\partial^{\alpha} m(\xi)|\le C_\alpha|\xi|^{-|\alpha|}$. Then there is a constant $C>0$, depending only on the constants $C_\alpha$, such that for any $p\in[1,\infty]$ and all $j\in\Z$, we have $\norm{\td\vp_jm}_{\calm^{p}}\le C$. Again, $\td\vp_jm$ extends to an $\calm_{TV}$-multiplier (in $\xi$) with the same norm bound.
\end{rem}

\begin{lem}\label{lem:FM}
 Let $\call$ be defined as in \eqref{av_op} and fix $\alpha=(\alpha_\tau,\alpha_\xi)\in\N_0\times\N_0^{d}$. Then we have for all $(\tau,\xi,v)\in\dot\R^{d+1}\times\R$ the estimate
 \begin{align*}
  \left|\partial_\tau^{\alpha_\tau}\partial_\xi^{\alpha_\xi} \frac{1}{\call(i\tau,i\xi,v)}\right|\lesssim \frac{1}{|\call(i\tau,i\xi,v)|}|\tau|^{-\alpha_\tau}|\xi|^{-|\alpha_\xi|}.
 \end{align*}
\end{lem}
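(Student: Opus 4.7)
\textbf{Proof plan for Lemma \ref{lem:FM}.}

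The plan is to proceed by induction on $|\alpha|=\alpha_\tau+|\alpha_\xi|$, exploiting the polynomial structure of $\mcL(i\tau,i\xi,v)=i\tau+|v|^{m-1}|\xi|^{2}$. The whole argument relies on an auxiliary estimate for derivatives of $\mcL$ itself, namely that for every multi-index $\gamma=(\gamma_\tau,\gamma_\xi)$ and every $(\tau,\xi,v)\in\dot\R^{d+1}\times\R$,
\begin{equation}\label{eq:aux-L}
 \left|\partial_\tau^{\gamma_\tau}\partial_\xi^{\gamma_\xi}\mcL(i\tau,i\xi,v)\right|\lesssim |\mcL(i\tau,i\xi,v)|\,|\tau|^{-\gamma_\tau}|\xi|^{-|\gamma_\xi|}.
\end{equation}
This is a direct case check: since $\mcL$ is linear in $\tau$ and quadratic in $\xi$, the only nonzero derivatives are $\partial_\tau\mcL=i$, $\partial_{\xi_k}\mcL=2|v|^{m-1}\xi_k$, and $\partial_{\xi_k}\partial_{\xi_l}\mcL=2|v|^{m-1}\delta_{kl}$. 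The two elementary bounds $|\tau|\le|\mcL|$ and $|v|^{m-1}|\xi|^{2}\le|\mcL|$ (both immediate from $|\mcL|^{2}=\tau^{2}+(|v|^{m-1}|\xi|^{2})^{2}$) then yield \eqref{eq:aux-L} in every case.

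With \eqref{eq:aux-L} in hand, the induction is driven by differentiating the identity $\mcL\cdot(1/\mcL)=1$ via the Leibniz rule. For $|\alpha|\ge1$ this gives
\[
 \mcL\,\partial^{\alpha}\!\left(\tfrac{1}{\mcL}\right) = -\sum_{\beta<\alpha}\binom{\alpha}{\beta}\,\partial^{\beta}\!\left(\tfrac{1}{\mcL}\right)\partial^{\alpha-\beta}\mcL,
\]
so that
\[
 \left|\partial^{\alpha}\!\tfrac{1}{\mcL}\right| \le \frac{1}{|\mcL|}\sum_{\beta<\alpha}\binom{\alpha}{\beta}\left|\partial^{\beta}\!\tfrac{1}{\mcL}\right|\,|\partial^{\alpha-\beta}\mcL|.
\]
Inserting the inductive hypothesis for $|\beta|<|\alpha|$ together with \eqref{eq:aux-L} applied to the multi-index $\alpha-\beta$ produces exactly the claimed bound, since the factors $|\mcL|$ cancel and the factors $|\tau|^{-\beta_\tau-\gamma_\tau}|\xi|^{-|\beta_\xi|-|\gamma_\xi|}$ combine to $|\tau|^{-\alpha_\tau}|\xi|^{-|\alpha_\xi|}$.

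I do not expect any genuine obstacle: the estimate is essentially a Faà di Bruno/quotient-rule computation, and the only structural ingredient is the pair of inequalities $|\tau|\le|\mcL|$ and $|v|^{m-1}|\xi|^{2}\le|\mcL|$. The only point where some care is needed is the verification of \eqref{eq:aux-L} for the trivially vanishing derivatives (e.g.\ $\gamma_\tau\ge2$ or $|\gamma_\xi|\ge3$, or mixed $\tau$-$\xi$ derivatives), which are all compatible with the desired bound since the left-hand side is zero. The condition $(\tau,\xi)\in\dot\R^{d+1}$ guarantees $\tau\neq0$ and $\xi\neq0$, so no division-by-zero issue arises and the statement holds uniformly in $v\in\R$.
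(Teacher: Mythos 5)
Your proof is correct, and it takes a slightly different route from the paper's. The paper proves, by induction on $|\alpha_\xi|$, an explicit combinatorial identity expressing $\partial_\xi^{\alpha_\xi}(1/\call)$ as a sum of terms $c_\beta\,\xi^\beta|v|^{(m-1)N_\beta}\call^{-1-N_\beta}$, handles the $\tau$-derivatives separately via $\partial_\tau\call=i$, and then estimates each term using exactly the two inequalities you isolate, $|\tau|\le|\call|$ and $|v|^{m-1}|\xi|^2\le|\call|$. You instead avoid computing the explicit structure of the derivative altogether: you establish the Mikhlin-type bound $|\partial^\gamma\call|\lesssim|\call|\,|\tau|^{-\gamma_\tau}|\xi|^{-|\gamma_\xi|}$ for the symbol itself (a finite case check, since $\call$ is linear in $\tau$ and quadratic in $\xi$) and then run a Leibniz-rule induction on $\call\cdot(1/\call)=1$. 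The essential analytic content is identical — both arguments reduce to the same two pointwise inequalities — but your version is arguably cleaner and more general, since it applies verbatim to any nonvanishing symbol satisfying the auxiliary bound, whereas the paper's identity is tied to the specific polynomial form of $\call$. All the details check out: the homogeneity exponents add up correctly across the Leibniz sum because $|\alpha_\xi-\beta_\xi|=|\alpha_\xi|-|\beta_\xi|$ for $\beta\le\alpha$, the constants depend only on the fixed $\alpha$, and $\dot\R^{d+1}$ excludes $\tau=0$ so $\call\neq0$ throughout.
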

\begin{proof}
 The proof rests on the identity
 \begin{align*}
  \partial_\xi^{\alpha_\xi} \frac{1}{\call(i\tau,i\xi,v)}=\sum_{\beta}c_{\beta}\frac{\xi^{\beta} |v|^{(m-1)N_{\beta}}}{\call(i\tau,i\xi,v)^{1+N_{\beta}}},
 \end{align*}
 where $c_{\beta}$ are constants, $N_\beta:=\frac{|\alpha_\xi|+|\beta|}{2}$, and the sum runs over those $\beta\in\N_0^d$ with $|\beta|\le|\alpha_\xi|$ such that $|\alpha_\xi|+|\beta|$ is even. The identity can be proven easily by induction on the order of $\alpha_\xi$. From this and $\partial_\tau\call(i\tau,i\xi,v)=i$, it immediately follows
 \begin{align*}
  \left|\partial_\tau^{\alpha_\tau}\partial_\xi^{\alpha_\xi} \frac{1}{\call(i\tau,i\xi,v)}\right|\lesssim \sum_{\beta}\left|\frac{\xi^{\beta} |v|^{(m-1)N_{\beta}}}{\call(i\tau,i\xi,v)^{1+\alpha_\tau+N_{\beta}}}\right|,
 \end{align*}
 which in view of
 \begin{align*}
  \displaystyle \frac{|\xi|^{|\beta|} |v|^{(m-1)N_{\beta}}}{|\call(i\tau,i\xi,v)|^{N_{\beta}}}\le \frac{|\xi|^{|\beta|} |v|^{(m-1)N_{\beta}}}{(|v|^{m-1}|\xi|^2)^{N_{\beta}}} = |\xi|^{-(2N_{\beta}-|\beta|)}=|\xi|^{-|\alpha_\xi|}
 \end{align*}
 and
 \begin{align*}
  \displaystyle \frac{1}{|\call(i\tau,i\xi,v)|^{\alpha_\tau}}\le |\tau|^{-\alpha_\tau}
 \end{align*}
 yields the assertion.
\end{proof}


\noindent
\thanks{\textbf{Acknowledgment.}
The first author acknowledges financial support by the the Max Planck Society through the Max Planck Research Group ``Stochastic partial differential equations'' and by the DFG through the CRC ``Taming uncertainty and profiting from randomness and low regularity in analysis, stochastics and their applications''. Research of the third author was supported in part by NSF grants DMS16-13911, RNMS11-07444 (KI-Net) and ONR grant N00014-1812465. The hospitality of Laboratoire Jacques-Louis Lions in Sorbonne University and its support through ERC grant 740623 under the EU Horizon 2020 
is gratefully acknowledged.}


\frenchspacing
\bibliographystyle{plain}
\bibliography{pme}
 
\end{document}